\numberwithin{equation}{section}
\theoremstyle{plain}
\newtheorem{theorem}{Theorem}
\theoremstyle{plain}
\newtheorem*{rfr}{Reverse flow recipe}
\theoremstyle{plain}
\newtheorem*{irl}{Irrational rotation lemma}
\theoremstyle{plain}
\newtheorem*{question}{Question}
\theoremstyle{plain}
\newtheorem*{assertiona}{Assertion~A}
\theoremstyle{plain}
\newtheorem*{assertionb}{Assertion~B}
\theoremstyle{plain}
\newtheorem{lemma}{Lemma}[section]
\theoremstyle{remark}
\newtheorem*{remark}{Remark}
\theoremstyle{definition}
\newtheorem{example}{Example}[section]
\theoremstyle{definition}
\newtheorem*{convention}{Convention}
\theoremstyle{definition}
\newtheorem*{step1}{Step~1}
\theoremstyle{definition}
\newtheorem*{step2}{Step~2}
\theoremstyle{definition}
\newtheorem*{step3}{Step~3}
\theoremstyle{definition}
\newtheorem*{step4}{Step~4}
\theoremstyle{definition}
\newtheorem*{step5}{Step~5}
\theoremstyle{definition}
\newtheorem*{case1}{Case~1}
\theoremstyle{definition}
\newtheorem*{case2}{Case~2}
\theoremstyle{definition}
\newtheorem*{case3}{Case~3}
\theoremstyle{definition}
\newtheorem*{case4}{Case~4}
\theoremstyle{definition}
\newtheorem*{case5}{Case~5}
\theoremstyle{definition}
\newtheorem*{case6}{Case~6}
\def\T{\mathbf{T}}
\def\dd{\mathrm{d}}
\def\eps{\varepsilon}
\def\Nn{\mathbb{N}}
\def\BBB{\mathcal{B}}
\def\EEE{\mathcal{E}}
\def\FFF{\mathcal{F}}
\def\HHH{\mathcal{H}}
\def\LLL{\mathcal{L}}
\def\NNN{\mathcal{N}}
\def\PPP{\mathcal{P}}
\def\QQQ{\mathcal{Q}}
\def\RRR{\mathcal{R}}
\def\VVV{\mathcal{V}}
\def\WWW{\mathcal{W}}
\def\frakM{\mathfrak{M}}
\def\frakR{\mathfrak{R}}
\def\AAAA{\mathscr{A}}
\def\IIII{\mathscr{I}}
\def\MMMM{\mathscr{M}}
\def\SSSS{\mathscr{S}}
\renewcommand{\le}{\leqslant}
\renewcommand{\ge}{\geqslant}
\title{Irreversible and dissipative systems}
\author[Beck]{J. Beck}
\address{Department of Mathematics, Hill Center for the Mathematical Sciences, Rutgers University, Piscataway NJ 08854, USA}
\email{jbeck@math.rutgers.edu}
\author[Chen]{W.W.L. Chen}
\address{School of Mathematical and Physical Sciences, Faculty of Science and Engineering, Macquarie University, Sydney NSW 2109, Australia}
\email{william.chen@mq.edu.au}
\author[Yang]{Y. Yang}
\address{School of Science, Beijing University of Posts and Telecommunications, Beijing 100876, China}
\email{yangyx@bupt.edu.cn}
\begin{document}

\keywords{flow lines, dissipative, transient, recurrent}

\subjclass[2010]{11K38, 37E35}

\begin{abstract}
We study some new dynamical systems where the corresponding piecewise linear flow is
neither time reversible nor measure preserving.
We create a dissipative system by starting with a finite polysquare translation surface,
and then modifying it by including a one-sided barrier on a common vertical edge of two adjacent atomic squares,
in the form of a union of finitely many intervals.
The line flow in this system partitions the system into a transient set and a recurrent set.
We are interested in the geometry of these two sets.
\end{abstract}

\maketitle

\thispagestyle{empty}

%
%

\section{Introduction}\label{sec1}

We continue our research into non-integrable systems concerning billiard flow in polygons and solids
as well as the related problem of geodesic flow on flat surfaces and in flat manifolds.
In the cases that we have studied thus far, the flow is piecewise linear, time reversible and measure preserving.
In particular, the last property permits the application of the Birkhoff ergodic theorem, and this leads to uniformity
of the relevant orbits inside flow-invariant sets.

In this paper, we study some new dynamical systems where the corresponding piecewise linear flow is
neither time reversible nor measure preserving.
Here the work of Veech~\cite{veech69} concerning the $2$-circle problem serves as our motivation.
Related to the $2$-circle problem is a polysquare translation surface comprising $2$ atomic squares,
modified by the introduction of some partial barriers on some edges of the squares.
The question then arises as to what may happen if these barriers are one-sided, permitting flow in one direction
but not the other.
We show that these one-sided barriers are responsible for the violation of
both the time reversible and the measure preserving properties.

Dynamical systems with such long term behaviour are said to be \textit{dissipative}.
Intuitively, it means that the time evolution of the flow contracts the space-volume.

The \textit{Lorentz attractor}, motivated by weather prediction and claiming that the time evolution contracts
the space-volume exponentially fast, is perhaps famous and notorious with equal measure.
It is the quintessential mathematical model of \textit{chaos theory} which has a huge literature.
Thus it is somewhat unsatisfactory that little has been proved with any mathematical rigour concerning the Lorentz attractor.

Our flat dissipative systems here are quite different, and exhibit slow contraction of the space-volume,
so we may use the term \textit{slow chaos}.
However, we are able to carry out a rigorous mathematical discussion on these systems with precise proofs.

Time irreversibility is a very interesting property that brings one to the well known \textit{Loschmidt paradox},
or reversibility-irreversibility paradox, which serves as a criticism of the so-called H-theorem of Boltzmann.
Here a fundamental problem of the \textit{proof} of Boltzmann is \textit{circular reasoning} and concerns an element of time asymmetry
artifically injected into an argument to establish time asymmetry.

The Loschmidt paradox is still fresh after $150$ years, perhaps due to the lack of any time irreversible
mathematical model with a detailed theory providing theorems and proofs.
We hope our work here will bring a little insight into this very intriguing question.

We introduce a dissipative system $\PPP$ by starting first with a finite polysquare translation surface,
and then modifying it by including a one-sided barrier $B$ on a common vertical edge of two adjacent atomic squares,
in the form of a union of finitely many vertical intervals, and where a set $A$ corresponding to $B$ is chosen
on a different vertical edge on the same horizontal street that contains~$B$.
We consider a flow moving from left to right.
When a flow line with irrational slope hits the barrier $B$ from the left, it continues from the corresponding point of the set~$A$.
We then observe that there are points on $\PPP$ that stay away from the flow line, apart possibly from the early stages,
and there are points on $\PPP$ with arbitrarily small neighbourhoods that are visited by the flow line infinitely often
as time goes to infinity.
The system $\PPP$ is thus divided into two subsets, and we are interested in the structure of these two subsets
which we call respectively the \textit{transient} set and the \textit{recurrent} set.

We begin our investigation by considering in Section~\ref{sec2} the special case of the $2$-square torus,
modified by the inclusion of a one-sided barrier on the common vertical edge.
We show that there is a simple reverse flow recipe which helps us investigate this question,
and that both the transient set and the recurrent set are finite unions of polygons with total area $1$
and with boundary edges that are either vertical or parallel to the direction of the flow.
We also explain how this problem also possesses a form of \textit{cyclic symmetry}.

We then proceed in Section~\ref{sec3} to investigate the corresponding problem with the $n$-square torus
modified in a similar way.
Here we may lose the cyclic symmetry which is crucial in the earlier investigation.
Nevertheless we are able to establish some partial results.
Then in Section~\ref{sec4}, we consider the more general problem of starting with a polysquare translation surface,
and use simple surfaces related to the L-surface to illustrate some properties of the transient sets and the recurrent sets.
We show that each can take up a very large proportion of the system or a very small proportion of the system,
and that the recurrent set can be split into disjoint components.
We then make a thorough study of the case of the modified L-surface.

In Section~\ref{sec5}, we study the problem of modified polysquare translation surfaces in greater detail.
Here we develop an extension process which is iterative and helps us study more closely the recurrent set.
We also use some important results of Kakutani and Kac.
However, the main question is whether this extension process is finite, in which case we can conclude that
both the transient set and the recurrent set are finite unions of polygons
and with boundary edges that are either vertical or parallel to the direction of the flow.

Section~\ref{sec6} is devoted to the study of this question.
We begin this investigation by studying two special cases of the problem,
one by making some assumptions on the one-sided barrier and the other by making some assumption on the recurrent set.
We conclude our investigation by finally showing that the extension process is indeed always finite,
and this leads to the very nice description of the transient sets and the recurrent sets.

%
%

\section{Modification of the Veech model}\label{sec2}

Consider a modified \textit{billiard} table comprising $2$ atomic squares, with a one-sided partial barrier of length~$b$, where $0<b<1$,
on the common edge separating the atomic squares, as shown in Figure~1.
This barrier is one-sided in the sense that when the billiard hits it from the left, as shown in the picture on the left,
it rebounds like normal billiard, whereas when the billiard hits it from the right, as shown in the picture on the right,
it passes straight through.
For simplicity, we have put this one-sided barrier against the bottom edge, but it can be anywhere on the vertical
edge separating the atomic squares.

\begin{displaymath}
\begin{array}{c}
\includegraphics[scale=0.8]{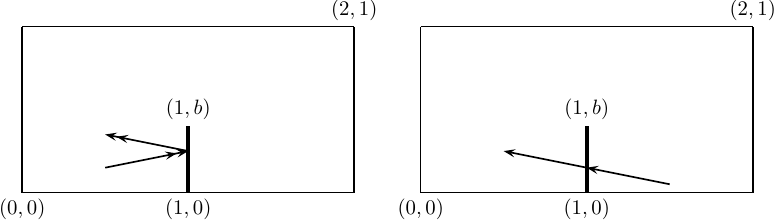}
\\
\mbox{Figure 1: a modified billiard table}
\end{array}
\end{displaymath}

Note that the billiard flow in Figure~1, if not entirely horizontal or vertical, is a $4$-direction flow.
We can convert it into a $1$-direction flow on some modified \textit{system}
using the simple idea of K\"{o}nig and Sz\"{u}cs~\cite{KS13} in 1913 that involves reflection across a horizontal
axis and across a vertical axis.
This results in $1$-direction flow on the modified system as shown in Figure~2.
However, we need to describe this $1$-direction flow very carefully.

\begin{displaymath}
\begin{array}{c}
\includegraphics[scale=0.8]{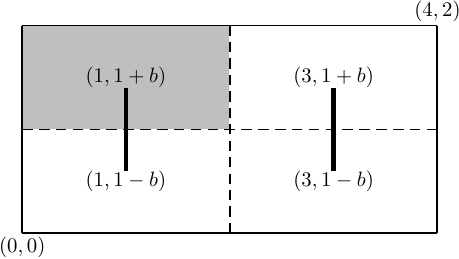}
\\
\mbox{Figure 2: a modified system corresponding to Figure~1}
\end{array}
\end{displaymath}

Figure~3 describes the flow on the modified system.
The left side of the vertical barrier on the left is \textit{identified} with the right side of the vertical barrier on the right.
Thus a flow line that hits the left side of the vertical barrier on the left continues in the same direction from the corresponding
point on the right side of the vertical barrier on the right.
On the other hand, the left side of the vertical barrier on the right is also \textit{identified} with the right side of the vertical barrier on the right.
Thus a flow line that hits the left side of the vertical barrier on the right continues straight through in the same direction.
Indeed, the $1$-direction flows indicated in Figure~3 correspond precisely to the billiard flows in Figure~1.

\begin{displaymath}
\begin{array}{c}
\includegraphics[scale=0.8]{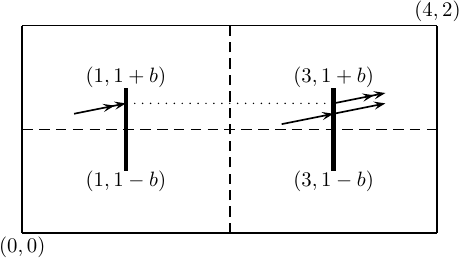}
\\
\mbox{Figure 3: geodesic flow on the system in Figure~2}
\end{array}
\end{displaymath}

Note that the system in Figure~3 is modified from a surface, but is not a surface.
The left side of both the vertical barrier on the left and the vertical barrier on the right are \textit{identified} with the right side
of the vertical barrier on the right, so there is no pairwise identification of edges.
The important point is that we can still study $1$-direction flow on it,
although this flow is not time reversible.

The systems in Figures 3 and~4 are equivalent.
The latter dynamical system satisfies the following convention which we now adopt.

\begin{displaymath}
\begin{array}{c}
\includegraphics[scale=0.8]{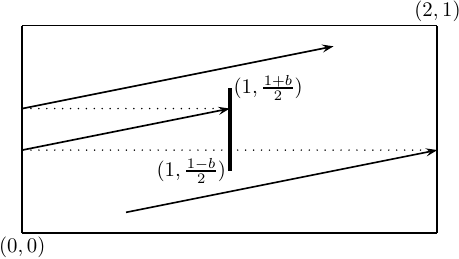}
\\
\mbox{Figure 4: a system equivalent to Figures 2 and~3}
\end{array}
\end{displaymath}

\begin{convention}
Henceforth we consider \textit{dissipative systems} involving $1$-direction
flow from left to right and with irrational slope $\alpha>0$.
When the \textit{dissipative flow line} hits a one-sided barrier, it then continues from the corresponding point on the left vertical edge
of the system and in the same direction.
\end{convention}

Consider the dissipative system $\PPP$ obtained from the $2$-square torus modified by the inclusion of a one-sided barrier
which may include both, one or neither of the endpoints $(1,a)$ and $(1,b)$, as shown in Figure~5.
We see that the two distinct flow line segments below the dotted line both continue along the bold
flow line segment.
Thus any measure preserving property of the flow is violated by a factor of~$2$, and so we cannot directly apply ergodic theory.

\begin{displaymath}
\begin{array}{c}
\includegraphics[scale=0.8]{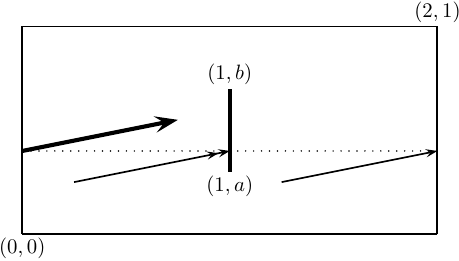}
\\
\mbox{Figure 5: lack of measure preserving property}
\end{array}
\end{displaymath}

For the system $\PPP$ shown in Figure~5, the dissipative flow of fixed irrational slope $\alpha>0$
can be described in terms of an operator $\T_\alpha^{(t)}$, where $t\ge0$ denotes time.
Roughly speaking, this operator denotes advancing geodesic flow of the fixed slope $\alpha$ by time~$t$.
More precisely, for any dissipative flow line $\LLL(t)$, $t\ge0$, of the fixed slope~$\alpha$, we write
\textcolor{white}{xxxxxxxxxxxxxxxxxxxxxxxxxxxxxx}
\begin{displaymath}
\T_\alpha^{(t)}(\LLL(t_1))=\LLL(t_1+t),
\quad
t_1\ge0.
\end{displaymath}
In other words, the operator $\T_\alpha^{(t)}$ advances the point $\LLL(t_1)$ on the flow line to the point $\LLL(t_1+t)$
after a further time $t$ has elapsed.

Using this operator $\T_\alpha^{(t)}$, we can partition $\PPP$ into two subsets.
To define these two subsets, we use some concepts introduced by Birkhoff~\cite{birkhoff27}
for a very general class of dynamical systems in 1927.

We say that a point $P\in\PPP$ is a \textit{wandering point} of $\PPP$ if there exists an open neighbourhood $D=D(P)$ of the point $P$
and a finite threshold $t_0\ge0$ such that the intersection
\textcolor{white}{xxxxxxxxxxxxxxxxxxxxxxxxxxxxxx}
\begin{equation}\label{eq2.1}
D\cap \T_\alpha^{(t)}(D)=\emptyset
\quad
\mbox{for every $t>t_0$};
\end{equation}
so that a dissipative flow line of slope $\alpha$ and starting from $P$ is bounded away from $P$ after time~$t_0$.
Furthermore, we say that a point $P\in\PPP$ is a \textit{non-wandering point} of $\PPP$ if
it is not a wandering point of~$\PPP$.

The set $\WWW(\PPP;\alpha)$ of all wandering points of $\PPP$ is called the \textit{transient set} of~$\PPP$,
and the set $\RRR(\PPP;\alpha)$ of all non-wandering points of $\PPP$ is called the \textit{recurrent set} or \textit{attractor} of~$\PPP$.
These two subsets of $\PPP$ are complements of each other, and give a partition of~$\PPP$.

It follows easily from the definition that the transient set $\WWW(\PPP;\alpha)$ is open
and that the recurrent set $\RRR(\PPP;\alpha)$ is closed and invariant under dissipative flow of slope~$\alpha$.
These two subsets are illustrated in Figure~6 in the case of the modified $2$-square torus in Figure~5.
We emphasize in particular their dependence on the fixed slope $\alpha$ of the flow under consideration.
It is also worth noting that the definition of wandering points and non-wandering points is given in terms of open neighbourhoods
of the points and their images under the operator $\T_\alpha^{(t)}$, and not in terms of the points and dissipative flow lines
that start from them.

\begin{displaymath}
\begin{array}{c}
\includegraphics[scale=0.8]{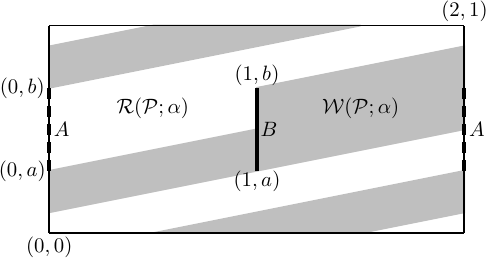}
\\
\mbox{Figure 6: transient set and recurrent set}
\end{array}
\end{displaymath}

Figure~7 illustrates that any dissipative flow line of slope $\alpha$ that starts from a point $P\in\RRR(\PPP;\alpha)$
remains in the recurrent set $\RRR(\PPP;\alpha)$, so that
the recurrent set $\RRR(\PPP;\alpha)$ is invariant under dissipative flow of slope~$\alpha$.

\begin{displaymath}
\begin{array}{c}
\includegraphics[scale=0.8]{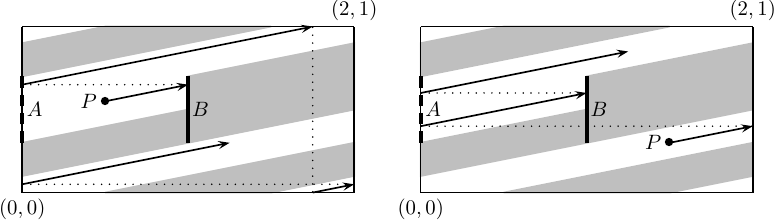}
\\
\mbox{Figure 7: invariance of the recurrent set under dissipative flow}
\end{array}
\end{displaymath}

Meanwhile, Figure~8 illustrates that any dissipative flow line of slope $\alpha$ that starts from a point $P\in\WWW(\PPP;\alpha)$
eventually moves into the recurrent set $\RRR(\PPP;\alpha)$.
Once there, it clearly never returns to the transient set $\WWW(\PPP;\alpha)$.

\begin{displaymath}
\begin{array}{c}
\includegraphics[scale=0.8]{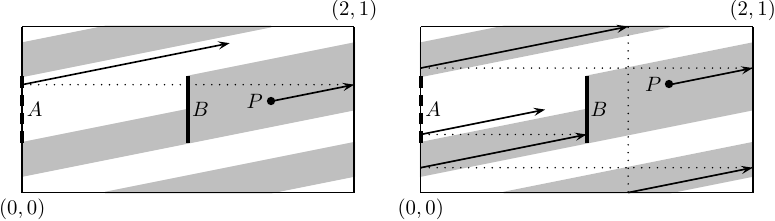}
\\
\mbox{Figure 8: leaving the transient set to join the recurrent set}
\end{array}
\end{displaymath}

We comment here that the above definitions for $\WWW(\PPP;\alpha)$ and $\RRR(\PPP;\alpha)$
remain valid for any system $\PPP$ that arises when a finite polysquare translation surface
is modified by the addition of a one-sided barrier on the common vertical edge of $2$ atomic squares.
However, we need to pay some attention to the existence of \textit{pathological points}.
These are points $P\in\PPP$ where a dissipative flow line of slope $\alpha$ that starts from the point $P$
hits a singularity of $\PPP$ and becomes undefined.
For the modified $2$-square torus in Figures 5 and~6, provided that we clearly
indicate whether the one-sided barrier includes either endpoint, then while these endpoints are singularities,
they are not pathological points because we have clear indication on how the flow line continues
after hitting either of them.
Pathological points do not affect our argument, as they form a set of $2$-dimensional Lebesgue measure~$0$.

\begin{theorem}\label{thm1}
Consider dissipative flow of slope~$\alpha$, where $\alpha>0$ is irrational, on a system $\PPP$ such as those
shown in Figures 4--8,
where the $2$-square torus has been modified with the inclusion of a one-sided barrier on the common vertical edge
of the atomic squares, in the form of a union of finitely many vertical intervals.
For the recurrent set $\RRR(\PPP;\alpha)$ and the transient set $\WWW(\PPP;\alpha)$, the following hold:

\emph{(i)}
Both sets are finite unions of polygons with total area $1$ and with boundary edges that are vertical or of slope~$\alpha$.

\emph{(ii)}
The recurrent set $\RRR(\PPP;\alpha)$ is invariant under dissipative flow of slope~$\alpha$, and
every half-infinite dissipative flow line of slope $\alpha$ in the set $\RRR(\PPP;\alpha)$ is uniformly distributed in this set.

\emph{(iii)}
Every half-infinite dissipative flow line of slope $\alpha$ that starts from a point in the transient set $\WWW(\PPP;\alpha)$
eventually leaves the set, moves to the set $\RRR(\PPP;\alpha)$ and is uniformly distributed there.
\end{theorem}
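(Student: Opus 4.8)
The plan is to reduce the two–dimensional dissipative flow on $\PPP$ to a one–dimensional first–return map on a vertical cross–section, to analyse that map by means of the irrational rotation lemma, and then to lift the conclusions back to $\PPP$ by suspension, reading off the geometry of the transient set from the reverse flow recipe. Concretely, I would take the left edge $\{0\}\times[0,1]$, identified with the circle $\Rr/\Zz$ through the height coordinate, as a cross–section $C$ transverse to the flow. Following a flow line of slope $\alpha$ from a point $y\in C$ until its first return to $C$, the return map is the two–branch piecewise rotation $T(y)=y+\alpha$ whenever $y+\alpha$ lies in the barrier set $B\subseteq\Rr/\Zz$ (the line jumps back at the common edge $x=1$) and $T(y)=y+2\alpha$ otherwise (the line crosses into the second square and wraps around), the horizontal return time being $1$ on the first branch and $2$ on the second. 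Thus $\PPP$, minus a set of measure zero, is the suspension of $T$ over this roof function, and the failure of injectivity of $T$ on the overlap region encodes precisely the factor–of–two loss of measure.

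For part (i) I would apply the reverse flow recipe. Tracing flow lines backward from the finitely many barrier intervals, the transient set $\WWW$ is exactly the set of points whose backward orbit terminates at the barrier, and each backward sweep produces a region bounded by vertical segments (on the edges $x=0,1$ or on the barrier itself) and by segments of slope $\alpha$ (flow lines). The essential point is that this reverse construction must \emph{close up} after finitely many steps; equivalently, the attractor $\Lambda=\bigcap_{n\ge0}T^n(C)$ of the circle map, and its complement, are finite unions of arcs. I would derive this finiteness from the cyclic symmetry special to the two–square model together with the irrational rotation lemma: the symmetry forces the orbit data of the finitely many barrier endpoints under the relevant rotation to be eventually self–reproducing, so that only finitely many distinct polygonal cells arise. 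Granting finiteness, $\RRR=\PPP\setminus\WWW$ is likewise a finite union of polygons with vertical or slope–$\alpha$ edges, and since the two sets partition $\PPP$ their areas sum to the total area $1$.

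For parts (ii) and (iii) the suspension picture does the work. On $\RRR$ the dissipation has been shed: the flow is a genuine bijective, area–preserving translation flow whose cross–section is $\Lambda$ with return map $T|_\Lambda$. By the irrational rotation lemma, $T|_\Lambda$ is conjugate to a minimal, uniquely ergodic rotation, so the suspended flow on $\RRR$ is uniquely ergodic with invariant measure the normalized Lebesgue measure; unique ergodicity then yields that every half–infinite flow line in $\RRR$ is uniformly distributed in $\RRR$, giving (ii). For (iii), given $P\in\WWW$ I would follow the forward $T$–orbit of its cross–section point: because $T$ merges its two branches on the overlap region, a transient point is carried into $\Lambda$ as soon as it meets that region, and the irrational rotation lemma guarantees this occurs after finitely many returns. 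Hence the flow line leaves $\WWW$ in finite time, enters $\RRR$, and equidistributes there by (ii).

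The genuinely hard step is the finiteness asserted in (i): a priori the reverse–flow cells, or equivalently the attractor $\Lambda$, could proliferate into infinitely many intervals, and this is exactly the obstruction that resurfaces for the $n$–square torus and the L–surface treated later. The entire argument therefore hinges on exploiting the cyclic symmetry peculiar to the two–square torus to control the combinatorics; once that finiteness is secured, the polygonal description, the area bookkeeping, and both equidistribution statements follow routinely from the suspension structure and the irrational rotation lemma.
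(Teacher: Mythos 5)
Your cross--section set-up is sound: the left edge is a genuine section, the two-branch return map with horizontal return times $1$ and $2$ is exactly the discretization underlying the paper's argument, and the $2$-to-$1$ merging correctly encodes the measure loss. The genuine gap is in the one step you yourself identify as hard: finiteness. Your proposed mechanism --- that the cyclic symmetry forces the orbit data of the barrier endpoints ``to be eventually self-reproducing'' --- is not an argument and cannot be made into one in that form: orbits of an irrational rotation are never eventually periodic (they are dense), and for interval translation mappings the set $\Lambda=\bigcap_{n\ge0}T^n(C)$ can genuinely be a Cantor set, which is precisely the Boshernitzan--Kornfeld phenomenon the paper recalls in Section~\ref{sec7}. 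The paper's actual mechanism is different and does not use symmetry at all for this step. One introduces the interval $A$ on the left edge at \emph{the same heights} as the barrier~$B$, and applies the reverse flow recipe with target $A\cup B$: a point lies in the candidate transient set $\frakM(\PPP;B;\alpha)$ if and only if its backward geodesic in direction $(-1,-\alpha)$ hits $B$ before~$A$. Since $A\cup B$ projects modulo one to a fixed finite union of intervals on a single vertical section, the irrational rotation lemma yields a uniform threshold $T_0(\alpha)$ such that \emph{every} backward geodesic hits $A$ or $B$ within time $T_0(\alpha)$. The first-hit dichotomy therefore partitions all of $\PPP$ into finitely many polygonal cells with vertical or slope-$\alpha$ edges in a single bounded-time sweep; in the $2$-square case there is no iterative ``closing up'' to control, so the Cantor-set danger never arises. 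Your proposal, by contrast, leaves the decreasing intersection $\bigcap_n T^n(C)$ unanalysed.

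Where the cyclic symmetry \emph{is} indispensable is in the measure bookkeeping, which your proposal gets wrong: $\PPP$ has total area~$2$, and the theorem asserts that $\RRR(\PPP;\alpha)$ and $\WWW(\PPP;\alpha)$ each have area~$1$, so ``their areas sum to the total area $1$'' is not the right statement and in any case does not follow from the partition alone. The paper uses the $2$-cyclic symmetry to prove a fair split: for almost every $(x,y)\in[0,1)^2$, exactly one of the two lifts $(x,y)$ and $(x+1,y)$ lies in $\frakM(\PPP;B;\alpha)$. This gives area $1$ for each set, and, more importantly, makes the jump back to $A$ a $1$-to-$1$ measure preserving map, so that the flow restricted to the complement of $\frakM(\PPP;B;\alpha)$ is invertible and measure preserving; only then does the Poincar\'{e} recurrence theorem identify that complement with $\RRR(\PPP;\alpha)$ up to measure~$0$, which is the substance of (ii) and (iii). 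Finally, a smaller repair: the irrational rotation lemma is merely a covering statement and does not show $T|_\Lambda$ is conjugate to a rotation. The paper obtains equidistribution by noting that $\RRR(\PPP;\alpha)$ has area~$1$, so its projection to the unit torus $[0,1)^2$ is almost everywhere bijective and conjugates the restricted flow to integrable geodesic flow of slope~$\alpha$; unique ergodicity and the Birkhoff ergodic theorem then give (ii), and your (iii) should be routed through the fact that every point of $\frakM(\PPP;B;\alpha)$ is carried to $A$ by the forward flow, rather than through an appeal to the rotation lemma.
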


\begin{proof}
We proceed in a number of steps.

\begin{step1}
The pointwise definition of the recurrent set $\RRR(\PPP;\alpha)$ does not provide any hint on how we may explicitly
construct and describe the set.
Indeed, in the general situation, this is a rather difficult problem.
However, for the modified $2$-square torus under consideration, we consider a no-go zone $\NNN(\PPP;\alpha)$
illustrated in Figure~9.
This is an open subset of $\PPP$ obtained by sweeping the barrier $B$ without its endpoints by the dissipative flow of slope $\alpha$
towards the right vertical edge of~$\PPP$.

\begin{displaymath}
\begin{array}{c}
\includegraphics[scale=0.8]{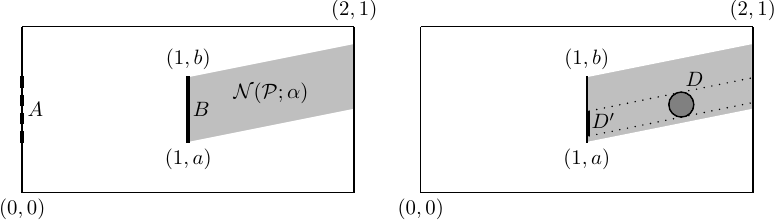}
\\
\mbox{Figure 9: the $2$-square torus modified with one-sided barriers}
\end{array}
\end{displaymath}

We claim that $\NNN(\PPP;\alpha)\subset\WWW(\PPP;\alpha)$;
in other words, that every point of the set $\NNN(\PPP;\alpha)$ is a wandering point of~$\PPP$.
Since $\NNN(\PPP;\alpha)$ is open, every point of $\NNN(\PPP;\alpha)$ is contained in an open subset $D\subset\NNN(\PPP;\alpha)$.
To justify this claim, it suffices to show that for every open subset $D\subset\NNN(\PPP;\alpha)$,
there exists $t_0>0$ such that \eqref{eq2.1} holds.
Now let $D'$ denote the image of $D$ on the right side of the one-sided barrier $B$ under projection in the direction $(-1,-\alpha)$,
as illustrated in the picture on the right in Figure~9.
Then $D'$ is an open subinterval of $B$ and
\begin{displaymath}
D'\cap \T_\alpha^{(t)}(D)=\emptyset
\quad\mbox{for every $t>0$},
\end{displaymath}
so that for every $t_0$ satisfying $0\le t_0\le(1+\alpha^2)^{1/2}$,
\begin{displaymath}
\T_\alpha^{(t_0)}(D')\cap \T_\alpha^{(t)}(D)=\emptyset
\quad\mbox{for every $t>(1+\alpha^2)^{1/2}$}.
\end{displaymath}
It is not difficult to see that
\textcolor{white}{xxxxxxxxxxxxxxxxxxxxxxxxxxxxxx}
\begin{displaymath}
D\subset\bigcup_{0\le t_0\le(1+\alpha^2)^{1/2}}\T_\alpha^{(t_0)}(D').
\end{displaymath}
It then follows that
\textcolor{white}{xxxxxxxxxxxxxxxxxxxxxxxxxxxxxx}
\begin{equation}\label{eq2.2}
D\cap \T_\alpha^{(t)}(D)=\emptyset
\quad\mbox{for every $t>(1+\alpha^2)^{1/2}$},
\end{equation}
justifying our claim.
\end{step1}

\begin{step2}
We define the open set $\frakM(\PPP;B;\alpha)$ to be the largest possible extension of the no-go zone $\NNN(\PPP;\alpha)$.
To find this set, let $A$ be an interval on the left vertical edge of~$\PPP$, identical to the interval $B$ that represents
the one-sided barrier, as shown in Figure~6.
Consistent with the fact that the transient set $\WWW(\PPP;\alpha)$ is open, we assume that the interval $B$ is open.
This assumption is valid, since any deviation from it affects only sets of $2$-dimensional Lebesgue measure~$0$.

\begin{rfr}
A given point $P\in\PPP$ satisfies $P\in\frakM(\PPP;B;\alpha)$ if and only if starting from~$P$,
an ordinary geodesic on the $2$-square torus in the direction $(-1,-\alpha)$ hits the open interval $B$
before hitting the corresponding interval $A$ on the left vertical edge of~$\PPP$.
\end{rfr}

Using time-quantitative equidistribution of geodesic flow on the $2$-square torus
in the form of the Koksma--Erd\H{o}s--Tur\'{a}n version of the Weyl criterion,
it follows that for any given irrational number $\alpha>0$, there is a finite threshold $T_0(\alpha)$ such that
starting from any point $P$ and moving in the direction $(-1,-\alpha)$, this geodesic segment hits $B$ or $A$
in time less than $T_0(\alpha)$.
Hence the set $\frakM(\PPP;B;\alpha)$ is a finite union of polygons with boundary edges
that are vertical or of slope~$\alpha$.
We shall comment further on the finiteness of $T_0(\alpha)$ in the Remark after the completion of the proof of Theorem~\ref{thm1}.

We claim that
\textcolor{white}{xxxxxxxxxxxxxxxxxxxxxxxxxxxxxx}
\begin{equation}\label{eq2.3}
\frakM(\PPP;B;\alpha)\subset\WWW(\PPP;\alpha);
\end{equation}
in other words, that every point of the set $\frakM(\PPP;B;\alpha)$ is a wandering point of~$\PPP$.
Since $\frakM(\PPP;B;\alpha)$ is open, every point of $\frakM(\PPP;B;\alpha)$ is contained in an open subset $D\subset\frakM(\PPP;B;\alpha)$.
To justify this claim, it suffices to show that for every open subset $D\subset\frakM(\PPP;B;\alpha)$,
there exists $t_0>0$ such that \eqref{eq2.1} holds.
An argument similar to that in Part~1 leads to an analogue of \eqref{eq2.2}, in the form
\begin{displaymath}
D\cap \T_\alpha^{(t)}(D)=\emptyset
\quad\mbox{for every $t>T_0(\alpha)$},
\end{displaymath}
justifying the assertion \eqref{eq2.3}.

Let $\frakM^c(\PPP;B;\alpha)=\PPP\setminus\frakM(\PPP;B;\alpha)$ denote the set of all points $P\in\PPP$
that are not in $\frakM(\PPP;B;\alpha)$.
Then
\textcolor{white}{xxxxxxxxxxxxxxxxxxxxxxxxxxxxxx}
\begin{equation}\label{eq2.4}
\RRR(\PPP;\alpha)\subset\frakM^c(\PPP;B;\alpha).
\end{equation}
The $2$-cyclic symmetry of the system $\PPP$ then ensures that the sets $\frakM(\PPP;B;\alpha)$ and $\frakM^c(\PPP;B;\alpha)$
have essentially the same projection on the unit torus $[0,1)^2$.
It follows that both sets have $2$-dimensional Lebesgue measure~$1$.
\end{step2}

\begin{step3}
Let $\LLL(t)$, $t\ge0$, denote any half-infinite dissipative flow line on~$\PPP$.
We say that a point $P\in\PPP$ is an \textit{infinite-time limit point} of $\LLL$ if there exists a sequence of
time instances $t_1<t_2<t_3<\ldots$ such that $t_i\to\infty$ and $\LLL(t_i)\to P$ as $i\to\infty$.
Furthermore, we say that the collection of all infinite-time limit points of $\LLL$ is
the infinite-time limit set of~$\LLL$.
By definition, this set is invariant under dissipative flow of slope $\alpha$ and so has $2$-dimensional Lebesgue measure
$0$, $1$ or~$2$.
This cannot be equal to~$0$, since its projection to the unit torus $[0,1)^2$ has $2$-dimensional Lebesgue measure~$1$.
This cannot be equal to~$2$, since it does not contain points of $\NNN(\PPP;\alpha)$.
Hence it has $2$-dimensional Lebesgue measure~$1$.

Suppose now that $\LLL(t)$, $t\ge0$, denotes any such half-infinite dissipative flow line on $\PPP$ that starts
from a point in the set $\frakM^c(\PPP;B;\alpha)$, so that $\LLL(0)\in\frakM^c(\PPP;B;\alpha)$.
We claim that the entire dissipative flow line is disjoint from the set $\frakM(\PPP;B;\alpha)$,
so that the set $\frakM^c(\PPP;B;\alpha)$ is invariant under dissipative flow of slope~$\alpha$.
Suppose, on the contrary, that there exists $t_1>0$ such that $\LLL(t_1)\in\frakM(\PPP;B;\alpha)$.
Then it follows from the reverse flow recipe that there exists $t_0<t_1$ such that $\LLL(t_0)\in B$.
There are two possibilities:
(i)
If $t_0>0$, then the dissipative flow of slope $\alpha$ cannot take $\LLL(0)$ to $\LLL(t_0)$, a contradiction.
(ii)
If $t_0\le0$, then it follows that $\LLL(0)\in\frakM(\PPP;B;\alpha)$, also a contradiction.
Thus the infinite-time limit set of $\LLL$ is a subset of $\frakM^c(\PPP;B;\alpha)$.
Since both sets have $2$-dimensional Lebesgue measure~$1$, they are essentially equal.
\end{step3}

\begin{step4}
The essentially $2$-cyclic symmetry of the system $\PPP$ guarantees that the sets $\frakM(\PPP;B;\alpha)$ and $\frakM^c(\PPP;B;\alpha)$
split up the system $\PPP$ in an essentially fair way.
More precisely, for every $(x,y)\in[0,1)^2$, apart from a set of $2$-dimensional Lebesgue measure~$0$, precisely one of the two points
\begin{displaymath}
(x,y)
\quad\mbox{and}\quad
(x+1,y)
\end{displaymath}
lies in $\frakM(\PPP;B;\alpha)$ while the other lies in $\frakM^c(\PPP;B;\alpha)$.
We next use Figure~6 to guide our discussion.

Starting from anywhere within the set $\frakM^c(\PPP;B;\alpha)$,
the dissipative flow of slope $\alpha$ moves within the set $\frakM^c(\PPP;B;\alpha)$
which is the white part in Figure~6.
When it reaches the white part immediately to the left of $B$ or the white part immediately to the left of $A$ on the right vertical edge,
it then \textit{jumps} back to~$A$.
The fair split of $\PPP$ mentioned above implies that this jump is $1$-to-$1$ and measure preserving.
Thus the dissipative flow of slope $\alpha$ restricted to the set $\frakM^c(\PPP;B;\alpha)$ is both invertible and measure preserving.

Suppose that we start from $A$ and sweep it along by the dissipative flow of slope~$\alpha$.
Then this certainly contains in the left atomic square the twin of $\NNN(\PPP;\alpha)$ in the right atomic square,
as illustrated in Figure~10.

\begin{displaymath}
\begin{array}{c}
\includegraphics[scale=0.8]{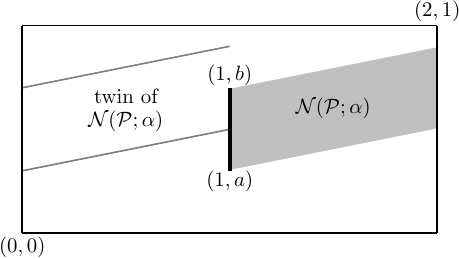}
\\
\mbox{Figure 10: no-go zone and its twin}
\end{array}
\end{displaymath}

Furthermore, this flow returns to $A$ infinitely many times in a $1$-to-$1$ and measure preserving way.
Let $\frakR(\PPP;A;\alpha)$ be the resulting subset of~$\PPP$,
the construction of which requires only finitely many zig-zaggings; see the Remark after the completion of the proof of Theorem~\ref{thm1}.
It is also clear that
\begin{equation}\label{eq2.5}
\frakR(\PPP;A;\alpha)\subset\frakM^c(\PPP;B;\alpha).
\end{equation}
Observe next that $\PPP$ is a square-covering system, so the projection of the dissipative flow
of slope $\alpha$ to the unit torus $[0,1)^2$ is ordinary integrable geodesic flow of slope~$\alpha$.
Since $\alpha$ is irrational, the dissipative flow of slope $\alpha$ modulo one is \textit{ergodic}.
It then follows that any subset of $\PPP$ of positive $2$-dimensional Lebesgue measure that is invariant under
the dissipative flow of slope $\alpha$ must have $2$-dimensional Lebesgue measure $1$ or~$2$.
The inclusion \eqref{eq2.5} then clearly implies that $\frakR(\PPP;A;\alpha)$ must have $2$-dimensional Lebesgue measure~$1$.
This means that
\begin{equation}\label{eq2.6}
\frakR(\PPP;A;\alpha)=\frakM^c(\PPP;B;\alpha),
\end{equation}
apart possibly from an exceptional set of $2$-dimensional Lebesgue measure~$0$.

The measure preserving aspect then allows us to use the well known Poincare recurrence theorem,
and conclude that apart possibly from an exceptional set of $2$-dimensional Lebesgue measure~$0$,
every point in $\frakR(\PPP;A;\alpha)$ is a non-wandering point and so lies in $\RRR(\PPP;\alpha)$.
Combining this observation with \eqref{eq2.4} and \eqref{eq2.6}, we conclude that
\textcolor{white}{xxxxxxxxxxxxxxxxxxxxxxxxxxxxxx}
\begin{displaymath}
\frakR(\PPP;A;\alpha)=\frakM^c(\PPP;B;\alpha)=\RRR(\PPP;\alpha),
\end{displaymath}
apart possibly from exceptional sets of $2$-dimensional Lebesgue measure~$0$.

On the other hand, the dissipative flow of slope $\alpha$ moves $B$ first to the gray part in Figure~6
which is $\NNN(\PPP;\alpha)$.
When it reaches the gray part immediately to the left of $B$ or the gray part
immediately to the left of $A$ on right vertical edge, it then jumps to~$A$.
Thus every point of $\frakM(\PPP;B;\alpha)$ is eventually moved to $A$ by the flow.
Since both sets $\WWW(\PPP;\alpha)$ and $\frakM(\PPP;B;\alpha)$ have $2$-dimensional Lebesgue measure~$1$,
it follows that
\textcolor{white}{xxxxxxxxxxxxxxxxxxxxxxxxxxxxxx}
\begin{displaymath}
\WWW(\PPP;\alpha)=\frakM(\PPP;B;\alpha),
\end{displaymath}
apart possibly from an exceptional set of $2$-dimensional Lebesgue measure~$0$.
\end{step4}

\begin{step5}
Uniformity in the recurrent set $\RRR(\PPP;\alpha)$ is now established by projecting to the unit torus $[0,1)^2$
and applying the results such as the Birkhoff ergodic theorem and also unique ergodicity.
\end{step5}

This completes the proof of Theorem~\ref{thm1}.
\end{proof}

\begin{remark}
The finiteness of $T_0(\alpha)$ in Step~2 of the preceding proof follows from the fact that
we are essentially considering geodesic flow in the unit torus $[0,1)^2$ in the direction $(-1,-\alpha)$,
and $\NNN(\PPP;\alpha)$ is extended to $\frakM(\PPP;B;\alpha)$
in a finite number of zig-zaggings; \textit{i.e.} a finite number of traverses between the opposite vertical edges
of atomic squares.
We also use the following fact from diophantine approximation.

\begin{irl}
Consider the irrational rotation sequence
\begin{displaymath}
s_0+n\alpha,
\quad
n=0,1,2,3,\ldots,
\end{displaymath}
in the unit torus $[0,1)$, where $\alpha$ is irrational.
For any fixed $\eps>0$, there is a finite threshold $N=N(\alpha;\eps)$ such that the union
\begin{displaymath}
\bigcup_{n=0}^{N-1}(s_0+n\alpha-\eps,s_0+n\alpha+\eps)=[0,1)
\end{displaymath}
for any starting point~$s_0$.
\end{irl}

\begin{proof}
A fundamental result in diophantine approximation says that
\begin{displaymath}
\left\vert\alpha-\frac{p_k}{q_k}\right\vert<\frac{1}{q_kq_{k+1}},
\end{displaymath}
where $p_k/q_k$, $k=0,1,2,3,\ldots,$ is the sequence of convergents to~$\alpha$.
Then
\begin{equation}\label{eq2.7}
\left\vert\ell\alpha-\frac{\ell p_k}{q_k}\right\vert<\frac{\ell}{q_kq_{k+1}}\le\frac{1}{q_{k+1}}<\frac{1}{q_k},
\quad
\ell=0,1,2,3,\ldots,q_k.
\end{equation}
Furthermore, $p_k$ and $q_k$ are coprime, so that as $\ell$ runs through the numbers $1,\ldots,q_k$,
$\ell p_k$ runs through a complete set of residues modulo~$q_k$.
Choosing a positive integer $k$ such that the denominator $q_k>1/\eps$,
we see that the threshold $N=N(\alpha;\eps)=q_k$ satisfies the requirements.
\end{proof}
\end{remark}

%
%

\section{Extension of the modified Veech model}\label{sec3}

Theorem~\ref{thm1} can be extended to the case of the $n$-square torus modified by identical one-sided
barriers on the vertical edges of adjoining atomic squares, in the form of a union of finitely many vertical intervals.
Denote this system by~$\PPP$.
Here Figure~11 illustrates $\PPP$ in the special case $n=4$ and where the union consists of a single vertical interval.

\begin{displaymath}
\begin{array}{c}
\includegraphics[scale=0.8]{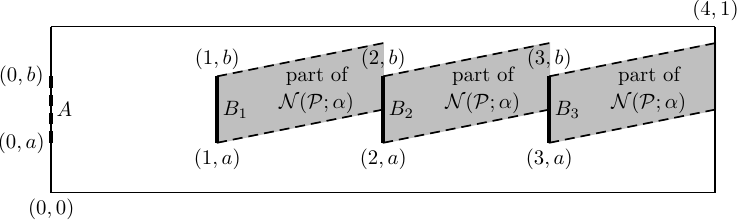}
\\
\mbox{Figure 11: the $4$-square torus modified with one-sided barriers}
\end{array}
\end{displaymath}

Consider a dissipative flow line on $\PPP$ with irrational slope $\alpha>0$.
When this flow line hits the left side of any one-sided barrier $B_1,\ldots,B_{n-1}$, it then continues in the same direction
from the corresponding point on the vertical interval $A$ on the left vertical edge.

We can prove that the recurrent set $\RRR(\PPP;\alpha)$ has area $1$ and is a finite union of polygons
with boundary edges that are vertical or of slope~$\alpha$,
and also that the transient set $\WWW(\PPP;\alpha)$ consists of $n-1$ parts that are,
apart from exceptional sets of $2$-dimensional Lebesgue measure~$0$,
all equivalent to $\RRR(\PPP;\alpha)$ modulo one.
Furthermore, every dissipative flow line with slope $\alpha$ in the recurrent set $\RRR(\PPP;\alpha)$ is uniformly
distributed in this set.
Moreover, every dissipative flow line with slope $\alpha$ that starts from any point in the transient set $\WWW(\PPP;\alpha)$
eventually leaves the set, moves to the recurrent set $\RRR(\PPP;\alpha)$ and is uniformly distributed there.

While we have considered one-sided barriers, there is still some sort of cyclic symmetry, of great help so far.
However, the situation becomes considerably more complicated if this cyclic symmetry is also violated, as in the system illustrated
in Figure~12 concerning a modified $n$-square torus in the case $n=4$.

\begin{displaymath}
\begin{array}{c}
\includegraphics[scale=0.8]{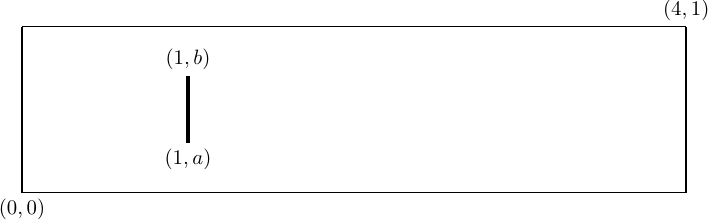}
\\
\mbox{Figure 12: a dissipative system while cyclic symmetry is violated}
\end{array}
\end{displaymath}

In this section, we establish the following simpler result.

\begin{theorem}[weak area theorem]\label{thmIDS.2}
Given any $\eps>0$, there exists an irrational number $\alpha>0$ and a positive integer $n$ such that
for some $n$-square torus which is modified with the addition of a one-sided barrier,
the time evolution of the dissipative flow of slope $\alpha$ contracts the initial space-volume
to less than $\eps$-part.
In other words, denoting the modified $n$-square torus by~$\PPP$, the recurrent set $\RRR(\PPP;\alpha)$ satisfies
\begin{displaymath}
\frac{\lambda_2(\RRR(\PPP;\alpha))}{n}<\eps.
\end{displaymath}
\end{theorem}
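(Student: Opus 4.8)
The plan is to prove this existence statement by an extreme construction in which the barrier traps almost all of the flow inside a single atomic square. Fix $\eps>0$ and choose an integer $n>1/\eps$. On the $n$-square torus $(\Rr/n\Zz)\times(\Rr/\Zz)$, I would place a single one-sided barrier $B$ occupying the \emph{entire} interior $\{1\}\times(0,1)$ of the first interior vertical edge, with associated interval $A=\{0\}\times(0,1)$ on the left vertical edge, and take any fixed irrational slope $\alpha>0$. Writing $Q_0=[0,1)\times[0,1)$ for the first atomic square, the goal is to show that the recurrent set is contained in $Q_0$, so that $\lambda_2(\RRR(\PPP;\alpha))\le1$ and hence $\lambda_2(\RRR(\PPP;\alpha))/n\le1/n<\eps$.

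The two facts that drive the argument are that $Q_0$ is forward invariant and that every point outside $Q_0$ is absorbed into $Q_0$ in uniformly bounded time. For the first, a dissipative flow line inside $Q_0$ moving rightward meets the edge $x=1$ at some height $y\in(0,1)$, hence meets $B$ and jumps back to the corresponding point of $A$ on the edge $x=0$; thus it never leaves $Q_0$, apart from the measure-zero set of lines meeting the endpoints of $B$. For the second, a flow line starting from a point with $x\in(1,n)$ moves right, wraps around the right vertical edge $x=n\equiv0$ into $Q_0$, and reaches $x=1$ after total arc length at most $(n+1)(1+\alpha^2)^{1/2}$, after which it remains in $Q_0$. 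Consequently, for each such point $P$ there is a small neighbourhood $D$ of $P$ disjoint from $\overline{Q_0}$ and a finite $t_0$ with $\T_\alpha^{(t)}(D)\subset\overline{Q_0}$ for all $t>t_0$, so that $D\cap\T_\alpha^{(t)}(D)=\emptyset$ for $t>t_0$ and $P$ is a wandering point.

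It then follows that $\PPP\setminus\overline{Q_0}\subset\WWW(\PPP;\alpha)$, whence $\RRR(\PPP;\alpha)=\PPP\setminus\WWW(\PPP;\alpha)\subset\overline{Q_0}$, giving $\lambda_2(\RRR(\PPP;\alpha))\le\lambda_2(Q_0)=1$ and the desired bound $\lambda_2(\RRR(\PPP;\alpha))/n\le1/n<\eps$. I expect no serious obstacle here: the only point needing care is the uniform absorption-time bound, which is immediate precisely because $Q_0$ becomes a genuine trap once the barrier spans the whole edge, so that the equidistribution threshold $T_0(\alpha)$ used in the proof of Theorem~\ref{thm1} is not even required. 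Equivalently, one may verify via the reverse flow recipe that $\frakM(\PPP;B;\alpha)=(1,n)\times(0,1)$ up to a null set, so that $\RRR(\PPP;\alpha)\subset\frakM^c(\PPP;B;\alpha)=Q_0$; note that only the inclusion $\RRR\subset\frakM^c$ is needed, not the full identification, so the cyclic-symmetry arguments of Section~\ref{sec2} are not invoked. The genuine difficulties of the theory—the exact area of $\RRR$ and the finiteness of the extension process in the absence of cyclic symmetry—lie in the later sections and are untouched by this weak statement; the same conclusion also follows by instead taking the symmetric $n$-square system of Figure~11, whose recurrent set has area exactly $1$.
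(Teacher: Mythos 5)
Your argument is correct, and it takes a genuinely different and far more elementary route than the paper. The paper deduces Theorem~\ref{thmIDS.2} from the small attractor theorem (Theorem~\ref{thmIDS.3}): for a slope $\alpha$ whose continued fraction digits are unbounded, it constructs on the unit torus a short barrier $B=I'_{\nu_2}$ with partner $A=I'_{\nu_1}$ lying on the grid lines $x=i/n$, using the gap estimate \eqref{eq3.2}, the self-avoiding strip $S(I)$ of area $1-q_k/q_{k+1}$, the condition \eqref{eq3.5} and a pigeonhole argument to force $\lambda_2(\WWW(\PPP;\alpha))>1-4/n^{1/2}$; Theorem~\ref{thmIDS.2} then follows by a linear transformation stretching $x$ by the factor~$n$, which turns the grid lines into the integer vertical edges of an $n$-square torus. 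Your construction dispenses with all of this diophantine machinery: by letting the barrier fill the entire edge $\{1\}\times(0,1)$ you make the first atomic square $Q_0$ an exact trap (the flow on $Q_0$ with the jump from $B$ to $A$ is precisely the unit-torus flow), every point outside $\overline{Q_0}$ is absorbed in uniformly bounded time and is therefore wandering, so $\RRR(\PPP;\alpha)\subset\overline{Q_0}$ and the ratio is at most $1/n<\eps$. This works for \emph{every} irrational $\alpha$, whereas the paper's construction is tied to slopes with unbounded partial quotients; in exchange, the paper's route proves the stronger Theorem~\ref{thmIDS.3}, smallness of the attractor on a \emph{fixed} unit torus with a genuinely short barrier, which your trap cannot give.

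The one point you must flag explicitly is the admissibility of a full-edge barrier. The theorem statements ask only for ``a one-sided barrier \ldots\ a union of finitely many vertical intervals,'' which your single interval satisfies literally; but every barrier appearing in the paper is partial ($0<b<1$, and the introduction speaks of ``partial barriers''), and your proof is not robust to that stricter reading: if $B$ must be a proper subset of the edge, any gap lets flow lines leak out of $Q_0$, the recurrent set (being flow-invariant with full projection modulo one) then meets every atomic square, and one is thrown back on a quantitative construction like the paper's. Your closing alternative via the symmetric system of Figure~11 does not repair this, since that system uses $n-1$ barriers rather than one and rests on area claims the paper only asserts, without proof, at the start of Section~\ref{sec3}. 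Subject to this definitional caveat, your absorption argument -- including the measure-zero treatment of orbits through the barrier endpoints and the uniform bound $t_0=(n+1)(1+\alpha^2)^{1/2}$ -- is sound.
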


As usual, $\lambda_2$ denotes the $2$-dimensional Lebesgue measure.

In fact, we establish the following result from which Theorem~\ref{thmIDS.2} follows easily from the details of the construction
by the application of a linear transformation.

\begin{theorem}[small attractor theorem]\label{thmIDS.3}
Suppose that for an irrational number $\alpha>0$, the continued fraction digits are unbounded.
Then there exists an infinite sequence of vertical intervals $B_\sigma$, $\sigma=1,2,3,\ldots,$ with the following property.
For every $\sigma=1,2,3,\ldots,$ let $\PPP_\sigma$ denote the unit torus modified by the addition of a vertical one-sided barrier $B_\sigma$
as illustrated in Figure~13,
and consider the recurrent set $\RRR(\PPP_\sigma;\alpha)$ under dissipative flow of slope $\alpha$ on~$\PPP_\sigma$.
Then
\begin{displaymath}
\lambda_2(\RRR(\PPP_\sigma;\alpha))\to0
\quad
\mbox{as $\sigma\to\infty$}.
\end{displaymath}
\end{theorem}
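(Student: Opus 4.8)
The plan is to run the machinery behind Theorem~\ref{thm1}, reduce everything to the first-return dynamics on the vertical edge, and then engineer the barrier $B_\sigma$ so that the reverse flow recipe produces a recurrent set consisting of only about $q_k$ thin parallelogram bands, whose total area is governed by $q_k\|q_k\alpha\|\approx q_k/q_{k+1}$. First I would invoke the reverse flow recipe exactly as in the proof of Theorem~\ref{thm1}: up to a set of measure zero, $\RRR(\PPP_\sigma;\alpha)=\frakM^c(\PPP_\sigma;B_\sigma;\alpha)$, and a point lies in $\frakM^c$ precisely when the backward geodesic of slope $\alpha$ from it meets the return interval $A$ before it meets the barrier $B_\sigma$. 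Equivalently, $\RRR(\PPP_\sigma;\alpha)$ is the forward flow-out of $A$ continued until the trajectory first lands on $B_\sigma$. Passing to the first-return map on the edge, which is the rotation $y\mapsto y+\alpha$ on $[0,1)$, the intervals $A$ and $B_\sigma$ become two arcs of a common width $\beta_\sigma$, and the recurrent set is the union of the slope-$\alpha$ bands sitting over the successive iterates of $A$ up to the first arrival in $B_\sigma$.

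The construction of $B_\sigma$ is where the unbounded partial quotients enter. Let $p_k/q_k$ be the convergents of $\alpha$, and for each $\sigma$ choose an index $k=k(\sigma)$ along a subsequence with $a_{k+1}\to\infty$; such indices exist precisely because the continued fraction digits of $\alpha$ are unbounded. I would then place $A$ at the arc centred at $0$ and $B_\sigma$ at the arc centred at $\{q_k\alpha\}$, each of width $\beta_\sigma=\tfrac12\|q_k\alpha\|$. The one-sided barrier thus realises a jump that shifts the height by $-\{q_k\alpha\}$, an arbitrarily small displacement; since $\|q_k\alpha\|\approx 1/q_{k+1}$ is tiny while $a_{k+1}$ is large, this barrier is extremely thin.

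The heart of the argument is a three-gap (three-distance) analysis of the flow-out, controlled by the standard estimates $\|q_{k-1}\alpha\|\approx 1/q_k$ and $\|q_k\alpha\|\approx 1/q_{k+1}$. The band over the $j$-th iterate meets $B_\sigma$ exactly when $\|(q_k-j)\alpha\|<\beta_\sigma$; since $\min_{1\le m\le q_k-1}\|m\alpha\|=\|q_{k-1}\alpha\|$ is far larger than $\beta_\sigma$, there is no arrival before step $q_k$, while at step $q_k$ the band coincides with $B_\sigma$ and jumps back onto $A$, closing the tower. Moreover the base-heights $\{j\alpha:0\le j<q_k\}$ have consecutive gaps at least $\|q_{k-1}\alpha\|>\beta_\sigma$, so the $q_k$ bands of width $\beta_\sigma$ are pairwise disjoint. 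Hence $\RRR(\PPP_\sigma;\alpha)$ is a union of exactly $q_k$ disjoint slope-$\alpha$ parallelograms, and
\begin{displaymath}
\lambda_2(\RRR(\PPP_\sigma;\alpha))=q_k\beta_\sigma\approx q_k\|q_k\alpha\|\approx\frac{q_k}{q_{k+1}}\le\frac{1}{a_{k+1}}\to0.
\end{displaymath}
The reverse flow recipe also shows the complement is transient: any point outside the tower has its backward geodesic meeting $B_\sigma$ before $A$, so it lies in $\frakM(\PPP_\sigma;B_\sigma;\alpha)=\WWW(\PPP_\sigma;\alpha)$; thus $\RRR(\PPP_\sigma;\alpha)$ is exactly this small tower.

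I expect the main obstacle to be the third step: pinning the first-arrival time to be exactly $q_k$ --- simultaneously ruling out any earlier landing in $B_\sigma$ and guaranteeing that the band at time $q_k$ covers $B_\sigma$ so that the tower truly closes --- together with the pairwise disjointness of the $q_k$ bands. All of this rests on the fine distribution of $\{j\alpha\}$ encoded in the three-gap theorem and the convergent inequalities above. A secondary subtlety, already implicit in Theorem~\ref{thm1}, is to confirm that the dissipative one-sided identification genuinely truncates the first return, so that $\RRR(\PPP_\sigma;\alpha)$ is this incomplete tower of area $q_k\beta_\sigma$ rather than a full Kakutani--Kac tower of area $1$; it is exactly the one-sidedness of $B_\sigma$ that performs this truncation.
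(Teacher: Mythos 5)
Your proposal takes a genuinely different route from the paper, and most of it is sound. The paper lower-bounds the \emph{transient} set: it flows an interval of width $1/q_k-1/q_{k+1}$ over horizontal distance $q_k$ to obtain a self-avoiding strip of area $1-q_k/q_{k+1}$, finds $A$ and $B$ \emph{a posteriori} by pigeonhole among the early crossings of that strip (two crossings whose $y$-coordinates agree on subintervals of length $>1/q_k-4/(n^{1/2}q_k)$), and then observes that the tail of the strip beyond $B$ lies in $\frakM(B_\sigma)\subset\WWW(\PPP_\sigma;\alpha)$, giving $\lambda_2(\RRR(\PPP_\sigma;\alpha))<4/n^{1/2}$. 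You instead upper-bound the \emph{recurrent} set directly by an exactly closing tower, and your three-gap estimates are correct: no arrival before step $q_k$ because $\min_{1\le m<q_k}\Vert m\alpha\Vert=\Vert q_{k-1}\alpha\Vert>1/(2q_k)>\beta_\sigma$, pairwise disjointness of the $q_k$ bands for the same reason, and area $q_k\beta_\sigma\approx q_k/(2q_{k+1})\le 1/a_{k+1}$. One caution: your appeal to ``$\RRR=\frakM^c$ exactly as in Theorem~1'' is too glib, since that equality used the $2$-cyclic symmetry of the $2$-square torus and fails in general (cf.\ Figures 23--24, where $\frakM$ is a proper subset of $\WWW$); but for the theorem only the inclusion $\RRR(\PPP_\sigma;\alpha)\subset\frakM(A_\sigma)$ is needed, the paper's own proof establishes exactly this inclusion, and in your construction the tower closes into an invariant, measure preserving region, so the inclusion (indeed equality up to null sets, by Poincar\'{e} recurrence) does hold.

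The genuine gap is the realizability of your placement within the paper's class of systems. Here $A_\sigma$ is the \emph{same-height} counterpart of $B_\sigma$ on the left vertical edge: the jump preserves the $y$-coordinate. If $B_\sigma$ sits at abscissa $x_B\in(0,1)$, then in circle coordinates on the left edge the trigger arc is $\widetilde{B}=B_\sigma-\alpha x_B$ and the jump acts as $u\mapsto u+\alpha x_B$; so the only height shifts you can realize are $\alpha x_B \bmod 1$ with $\alpha x_B\in(0,\alpha)$. Your tower requires the exact alignment $\widetilde{B}=A+q_k\alpha$, i.e.\ $\alpha x_B\equiv -q_k\alpha\pmod 1$. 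Now $\{-q_k\alpha\}$ equals $\Vert q_k\alpha\Vert$ (tiny, so $x_B=\Vert q_k\alpha\Vert/\alpha$ works) for one parity of $k$, but equals $1-\Vert q_k\alpha\Vert$ for the other parity, which is unattainable whenever $\alpha<1$. Since the convergents alternate and unboundedness of the digits only provides large $a_{k+1}$ along a subsequence of indices whose parity you cannot choose, there exist $\alpha\in(0,1)$ --- for instance with all large partial quotients occurring at indices of the unfavourable parity --- for which your construction, as stated, admits no valid barrier position. The gap is repairable: e.g.\ close the tower at step $m=q_k+r$ for a suitable bounded $r$ chosen so that $\{-m\alpha\}\in(0,\alpha)$, noting that $\min_{1\le i<m}\Vert i\alpha\Vert$ is still $\Vert q_k\alpha\Vert$ and $m\beta_\sigma$ is still $O(q_k/q_{k+1})$; but you neither notice the constraint nor address it, and it is worth observing that the paper's pigeonhole route sidesteps the issue entirely, precisely because it never requires the tower to close.
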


\begin{displaymath}
\begin{array}{c}
\includegraphics[scale=0.8]{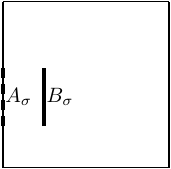}
\\
\mbox{Figure 13: a one-sided barrier $B_\sigma$ and its counterpart $A_\sigma$ on the left vertical edge}
\end{array}
\end{displaymath}

\begin{proof}
For a one-sided vertical barrier $B_\sigma$ in the torus $[0,1)^2$,
let $A_\sigma=A_\sigma(B_\sigma)$ denote its counterpart on the left vertical edge of the torus,
as shown in Figure~13.

We partition the unit torus into two polygonal regions $\frakM(A_\sigma)$ and $\frakM(B_\sigma)$ using the reverse flow recipe
involving the flow in the direction $(-1,-\alpha)$.
This flow moves every point of $\frakM(A_\sigma)$ to $A_\sigma$ without hitting~$B_\sigma$,
and moves every point of $\frakM(B_\sigma)$ to $B_\sigma$ without hitting~$A_\sigma$.
It is easy to see that $\frakM(A_\sigma)$ is invariant under dissipative flow of slope~$\alpha$,
and that the dissipative flow of slope $\alpha$ moves $\frakM(B_\sigma)$ to $\frakM(A_\sigma)$.
Here the last point follows from the observation that the barrier-free geodesic flow of slope $\alpha$
moves $\frakM(B_\sigma)$ to $A_\sigma\cup B_\sigma$, and the dissipative flow of slope $\alpha$ jumps from $B_\sigma$ to~$A_\sigma$,
after which it will never return to $\frakM(B_\sigma)$.
Hence
\begin{displaymath}
\RRR(\PPP_\sigma;\alpha)\subset\frakM(A_\sigma),
\end{displaymath}
if we ignore a possible exceptional set of $2$-dimensional Lebesgue measure~$0$.
Thus to establish the small attractor theorem, it remains to show that $\frakM(A_\sigma)$ is less than $\eps$-part
of the unit torus $[0,1)^2$ provided that the index $\sigma$ is chosen appropriately.

Let $n\ge9$ be an integer.
For every integer $i=1,2,3,\ldots,n$, let
\begin{displaymath}
V_i=\left\{\left(\frac{i}{n},y\right):y\in[0,1)\right\}
\end{displaymath}
denote the vertical line segment of length $1$ and $x$-coordinate equal to $i/n$ in the unit torus $[0,1)^2$.
We consider a geodesic $\LLL_\alpha$ on $[0,1)^2$, with irrational slope $\alpha>0$ and starting from the point $(0,\beta)$
on the left vertical edge of $[0,1)^2$.
Then $\LLL_\alpha$ intersects $V_i$ for the first time at the point $(i/n,\{\beta+i\alpha/n\})$,
and the collection of the intersection points of $\LLL_\alpha$ with $V_i$ is given by
\begin{displaymath}
\left(\frac{i}{n},\left\{\beta+\frac{(i+jn)\alpha}{n}\right\}\right),
\quad
j=0,1,2,3,\ldots.
\end{displaymath}
These points partition $V_i$ in a fairly regular fashion.
More precisely, let $q_k=q_k(\alpha)$ denote the denominator of the $k$-th convergent to the number~$\alpha$,
where the integer $k$ is assumed to be sufficiently large.
Consider the finite collection
\begin{equation}\label{eq3.1}
\left(\frac{i}{n},\left\{\beta+\frac{(i+jn)\alpha}{n}\right\}\right),
\quad
j=0,1,2,3,\ldots,q_k-1,
\end{equation}
of the first $q_k$ intersection points of $\LLL_\alpha$ with~$V_i$.
Using the inequality \eqref{eq2.7} and the inequality $\Vert\gamma_1-\gamma_2\Vert\le\vert\{\gamma_1\}-\{\gamma_2\}\vert$,
where $\Vert\gamma\Vert$ denotes the distance of $\gamma$ from the nearest integer,
we see that any two distinct points in \eqref{eq3.1} represented by $j_1$ and $j_2$ satisfying $0\le j_1<j_2\le q_k-1$ have gap
\begin{align}\label{eq3.2}
&
\left\vert\left\{\beta+\frac{(i+j_1n)\alpha}{n}\right\}-\left\{\beta+\frac{(i+j_2n)\alpha}{n}\right\}\right\vert
\nonumber
\\
&\quad
\ge\Vert(j_1-j_2)\alpha\Vert
\ge\left\Vert\frac{(j_1-j_2)p_k}{q_k}\right\Vert-\frac{1}{q_{k+1}}
\ge\frac{1}{q_k}-\frac{1}{q_{k+1}},
\end{align}
since $\vert j_1-j_2\vert<q_k$ and the pair $p_k,q_k$ are coprime, so that $(j_1-j_2)p_k$ is not divisible by~$q_k$.
Thus the $y$-coordinates of the points in the set \eqref{eq3.1} closely mimic an arithmetic progression with gap $1/q_k$.

Consider next a subinterval $I$ on the left vertical edge of $[0,1)^2$, with lower and upper endpoints
given respectively by
\begin{displaymath}
(0,\beta)
\quad\mbox{and}\quad
\left(0,\beta+\frac{1}{q_k}-\frac{1}{q_{k+1}}\right).
\end{displaymath}
We move $I$ by using the splitting-free geodesic flow of slope $\alpha$ on $[0,1)^2$ so that the horizontal distance
covered is~$q_k$, and let $S(I)$ denote the resulting strip in $[0,1)^2$.
In view of the inequality \eqref{eq3.2}, we see that this strip $S(I)$ is self-avoiding, and its area is precisely
\textcolor{white}{xxxxxxxxxxxxxxxxxxxxxxxxxxxxxx}
\begin{displaymath}
q_k\left(\frac{1}{q_k}-\frac{1}{q_{k+1}}\right)=1-\frac{q_k}{q_{k+1}}.
\end{displaymath}
Furthermore, let
\textcolor{white}{xxxxxxxxxxxxxxxxxxxxxxxxxxxxxx}
\begin{equation}\label{eq3.3}
I_\nu,
\quad
\nu=1,2,3,\ldots,nq_k,
\end{equation}
denote the successive vertical intervals of intersection of $S(I)$ with the line segments $V_i$, $i=1,2,3,\ldots,n$,
so that if $\nu=i+jn$, where the indices $i=1,2,3,\ldots,n$ and $j=0,1,2,3,\ldots,q_k-1$, then the interval $I_\nu$ has lower endpoint
given by \eqref{eq3.1}.

Consider next a subcollection
\begin{equation}\label{eq3.4}
I_\nu,
\quad
\nu=1,2,3,\ldots,[n^{1/2}q_k],
\end{equation}
of the collection of intervals \eqref{eq3.3}, obtained in the early stage of the process in the construction of the set $S(I)$.
This gives \textit{time-closeness} but not \textit{space-closeness}.
The total length of these intervals is at least
\begin{displaymath}
(n^{1/2}-1)q_k\left(\frac{1}{q_k}-\frac{1}{q_{k+1}}\right)>n^{1/2}-2,
\end{displaymath}
provided that
\textcolor{white}{xxxxxxxxxxxxxxxxxxxxxxxxxxxxxx}
\begin{equation}\label{eq3.5}
q_{k+1}>n^{1/2}q_k.
\end{equation}
Hence there is some $y_0\in(0,1)$ such that at least $n^{1/2}-2$ intervals in \eqref{eq3.4}
contain points with $y$-coordinates equal to~$y_0$.
The $y$-coordinates of the lower endpoints of these intervals are clearly contained in an interval of length less than $1/q_k$,
so it follows immediately from the pigeonhole principle that the $y$-coordinates of two of these intervals $I_{\nu_1}$ and $I_{\nu_2}$
differ by at most $1/(n^{1/2}-2)q_k$.
Thus there are subintervals $I'_{\nu_1}\subset I_{\nu_1}$ and $I'_{\nu_2}\subset I_{\nu_2}$ that contain precisely the same $y$-coordinates,
with common length satisfying
\begin{displaymath}
\vert I'_{\nu_1}\vert=\vert I'_{\nu_2}\vert
\ge\vert I\vert-\frac{1}{(n^{1/2}-2)q_k}
=\frac{1}{q_k}-\frac{1}{q_{k+1}}-\frac{1}{(n^{1/2}-2)q_k}
>\frac{1}{q_k}-\frac{4}{n^{1/2}q_k},
\end{displaymath}
provided that \eqref{eq3.5} holds and $n\ge9$.

We may assume, without loss of generality, that $\nu_1<\nu_2$.
Clearly the reverse $\alpha$-flow, starting from the far end of $S(I)$, namely $I_{nq_k}$, hits $I_{\nu_2}$ before hitting~$I_{\nu_1}$.
We now let $B=I'_{\nu_2}$ and $A=I'_{\nu_1}$, and let $\PPP$ denote the unit torus $[0,1)^2$ modified by the inclusion of
the one-sided barrier~$B$.
Then clearly the transient set has measure
\begin{displaymath}
\lambda_2(\WWW(\PPP;\alpha))
\ge\frac{1}{n}(nq_k-n^{1/2}q_k)\left(\frac{1}{q_k}-\frac{4}{n^{1/2}q_k}\right)>1-\frac{4}{n^{1/2}},
\end{displaymath}
so that the recurrent set has measure
\begin{displaymath}
\lambda_2(\RRR(\PPP;\alpha))<\frac{4}{n^{1/2}}<\eps,
\end{displaymath}
provided that $n\ge(4/\eps)^2$.
Naturally, since $q_{k+1}>a_{k+1}q_k$ and the continued fraction digits $a_k$ of $\alpha$ are unbounded,
we can always find a sufficiently large integer $k$ so that \eqref{eq3.5} is satisfied.

Finally we can determine that the vertical line segment $V_i$, $i=1,2,3,\ldots,n$,
that contains the edge $A$ to be the left vertical edge of the unit torus, and this completes the proof.
\end{proof}

%
%

\section{Modification of polysquare translation surfaces}\label{sec4}

We next consider modification of a polysquare translation surface
by the inclusion of a one-sided vertical barrier on part of a vertical edge.
We look at some examples.

\begin{example}\label{ex21}
Consider the horizontally reversed L-surface that is modified by the inclusion of a one-sided barrier $B$ of length~$b$, where $0<b<1$,
as shown in the picture on the left in Figure~14.
Here we make the assumption that the slope $\alpha>0$ of the dissipative flow satisfies $\alpha<b$.
Using the reverse flow recipe, we see that the transient set is essentially the part of the system that is coloured gray,
and includes the entire top atomic square.
Furthermore, if we modify the original reverse L-surface by including extra atomic squares on top of the top atomic square,
as illustrated in the picture on the right in Figure~14, we see that these extra atomic squares are all
part of the transient set.
This illustrates the possibility that the transient set may make up the overwhelming majority of the system.

\begin{displaymath}
\begin{array}{c}
\includegraphics[scale=0.8]{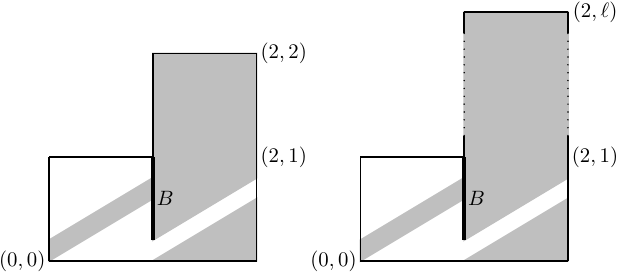}
\\
\mbox{Figure 14: horizontally reversed L-surface modified by the inclusion}
\\
\mbox{of a one-sided barrier}
\end{array}
\end{displaymath}
\end{example}

\begin{example}\label{ex22}
Consider the vertically reversed L-surface that is modified by the inclusion of a one-sided barrier $B$ of length~$b$, where $0<b<1$,
as shown in the picture on the left in Figure~15.
Again we make the assumption that the slope $\alpha>0$ of the dissipative flow satisfies $\alpha<b$.
Using the reverse flow recipe, we see that the transient set is essentially the part of the system that is coloured gray,
and leaves out the entire bottom atomic square.
Furthermore, if we modify the original reverse L-surface by including extra atomic squares below the bottom atomic square,
as illustrated in the picture on the right in Figure~15, we see that these extra atomic squares do not form
any part of the transient set.
This illustrates the possibility that the transient set may make up the overwhelming minority of the system.

\begin{displaymath}
\begin{array}{c}
\includegraphics[scale=0.8]{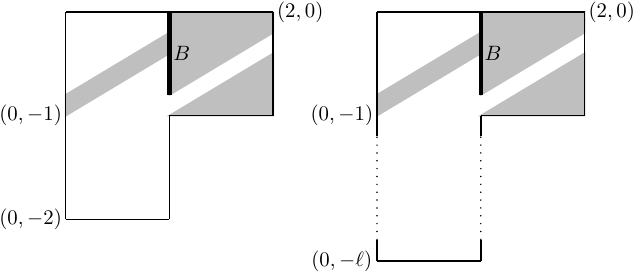}
\\
\mbox{Figure 15: vertically reversed L-surface modified by the inclusion}
\\
\mbox{of a one-sided barrier}
\end{array}
\end{displaymath}
\end{example}

\begin{example}\label{ex23}
Consider two separate copies of the modified horizontally reversed L-surface in Example~\ref{ex21},
with identically placed one-sided barriers $B_1$ and $B_2$ of length~$b$, where $0<b<1$, and glued
together one on top of the other as shown in Figure~16.
Again we make the assumption that the slope $\alpha>0$ of the dissipative flow satisfies $\alpha<b$.
Using the reverse flow recipe, we see that the transient set is essentially the part of the system that is coloured gray.
Clearly the recurrent set in white is made up of two non-trivial subsets, each of which is invariant
under dissipative flow of slope~$\alpha$.
Naturally the dissipative flow cannot take a point in one of the invariant subsets of the recurrent set to anywhere in the other invariant
subset, so it follows that the dissipative flow on the recurrent set cannot possibly be ergodic.

\begin{displaymath}
\begin{array}{c}
\includegraphics[scale=0.8]{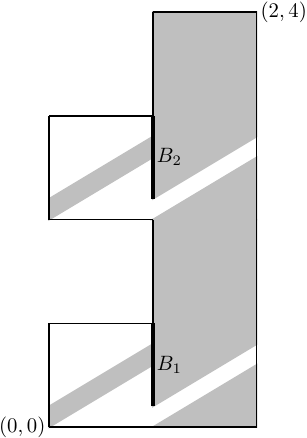}
\\
\mbox{Figure 16: L-surfaces glued together and modified by the inclusion}
\\
\mbox{of identical one-sided barriers}
\end{array}
\end{displaymath}
\end{example}

Consider the special case where we start with the L-surface and modify it with the inclusion of a one-sided barrier
on the vertical edge separating the two bottom atomic squares, as shown in Figure~17.

\begin{displaymath}
\begin{array}{c}
\includegraphics[scale=0.8]{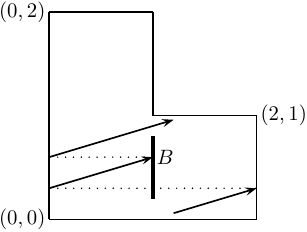}
\\
\mbox{Figure 17: L-surface modified by the inclusion of a one-sided barrier}
\end{array}
\end{displaymath}

Denote such a modified system by~$\PPP$.
Computer-generated pictures indicate that the recurrent set $\RRR(\PPP;\alpha)$ of $\PPP$ has area $1$ or~$2$,
and is a finite union of polygons with boundary edges that are vertical or of slope~$\alpha$.
The pictures can be surprisingly complicated, particularly if the barrier is short, or if it breaks into finitely many pieces.
This makes the problem rather interesting.

Meanwhile, the dissipative system $\PPP$ modulo one is the ordinary geodesic flow of slope $\alpha$
in the unit torus $[0,1)^2$, and this is uniquely ergodic.
This implies that if the recurrent set $\RRR(\PPP;\alpha)$ of $\PPP$ has area~$1$,
then the dissipative flow of slope $\alpha$ there is uniquely ergodic.
On the other hand, if the recurrent set $\RRR(\PPP;\alpha)$ of $\PPP$ has area~$2$,
then we do not know immediately whether the dissipative flow of slope $\alpha$ there is also uniquely ergodic.
In principle, the set $\RRR(\PPP;\alpha)$ may contain an invariant subset of area~$1$.
This raises an interesting uniformity question.

We next illustrate a class of dissipative, or \textit{non-conservative}, dynamical systems.
The restriction of the flow to the recurrent set gives rise to a \textit{nice conservative} measure preserving system,
to which we may apply classical ergodic theory.

Consider the system $\PPP$ shown in Figure~18,
where we start with the L-surface and then modify it by including a one-sided barrier of length $b$ as shown,
with lower endpoint $(1,0)$ and upper endpoint $(1,b)$.

\begin{displaymath}
\begin{array}{c}
\includegraphics[scale=0.8]{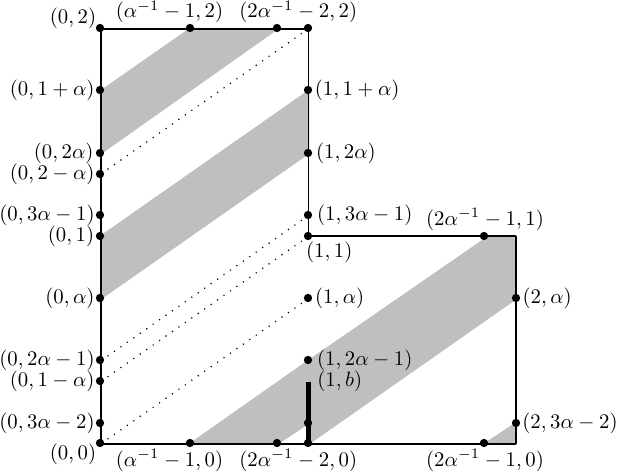}
\\
\mbox{Figure 18: L-surface modified by the inclusion of a one-sided barrier}
\end{array}
\end{displaymath}

We consider dissipative flow of slope $\alpha$ on~$\PPP$, and make the further restrictions
\begin{equation}\label{eq4.1}
\frac{2}{3}<\alpha<\frac{3}{4}
\quad\mbox{and}\quad
0<b<2\alpha-1,
\end{equation}
so that Figure~18 represents reasonable ranges of the parameters $b$ and~$\alpha$.

It is straightforward to check that the recurrent set $\RRR(\PPP;\alpha)$ is represented by the part of $\PPP$ in white,
and it has area~$2$.
The transient set $\WWW(\PPP;\alpha)$ is represented by the part of $\PPP$ in gray, and it has area~$1$.

We have highlighted in Figure~18 some critical points and line segments which
help us construct the relevant interval exchange transformation
of the dissipative flow of slope $\alpha$ on~$\PPP$.

Let $\EEE$ denote the union of the left vertical edges of the atomic squares of~$\PPP$.
It is not difficult to see that the intersection of $\EEE$ with the transient set $\WWW(\PPP;\alpha)$ modulo one
is the unit interval $[0,1)$, while the intersection
\begin{displaymath}
\VVV=\EEE\cap\RRR(\PPP;\alpha)
\end{displaymath}
of $\EEE$ with the recurrent set $\RRR(\PPP;\alpha)$ modulo one is also the unit interval $[0,1)$, but with multiplicity~$2$.

As shown in Figures 18 and~19, 
the set $\VVV$ is the union of four vertical line segments $L_1,L_2,L_3,L_4$, where

(1)
$L_1$ has endpoints $(0,0)$ and $(0,\alpha)$;

(2)
$L_2$ has endpoints $(0,1)$ and $(0,2\alpha)$;

(3)
$L_3$ has endpoints $(0,1+\alpha)$ and $(0,2)$; and

(4)
$L_4$ has endpoints $(1,2\alpha-1)$ and $(1,1)$.

We can make further subdivisions, as shown in Figures 18 and 19, where

(5)
$L_1$ is split into $L_{1,1},\ldots,L_{1,4}$ at the points $(0,3\alpha-2)$, $(0,1-\alpha)$ and $(0,2\alpha-1)$;

(6)
$L_2$ is split into $L_{2,1},\ldots,L_{2,3}$ at the points $(0,3\alpha-1)$ and  $(0,2-\alpha)$; and

(7)
$L_4$ is split into $L_{4,1},L_{4,2}$ at the point $(1,\alpha)$.

\begin{displaymath}
\begin{array}{c}
\includegraphics[scale=0.8]{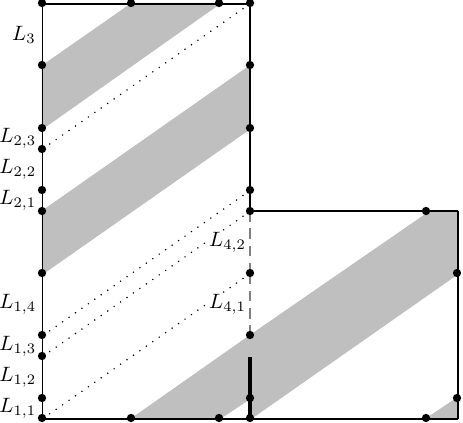}
\\
\mbox{Figure 19: L-surface modified by the inclusion of a one-sided barrier}
\end{array}
\end{displaymath}

Consider the discrete transformation
\begin{displaymath}
\T_\alpha:\VVV\to\VVV
\end{displaymath}
which is the dissipative shift by the vector $(1,\alpha)$.
It follows from Figure~19 that
\begin{equation}\label{eq4.2}
\begin{array}{rl}
\T_\alpha(L_{1,3})=L_{2,1},
&\quad
\T_\alpha(L_{2,3})=L_{1,1},
\\
\T_\alpha(L_{1,4})=L_{2,2}\cup L_{2,3},
&\quad
\T_\alpha(L_{4,1})=L_{1,2}\cup L_{1,3},
\\
\T_\alpha(L_3)=L_{4,1},
&\quad
\T_\alpha(L_{4,2})=L_{1,4},
\\
\T_\alpha(L_{1,1}\cup L_{1,2})=L_{4,2},
&\quad
\T_\alpha(L_{2,1}\cup L_{2,2})=L_3.
\end{array}
\end{equation}
There is a good reason for listing the information in \eqref{eq4.2} in this particular way,
as it is not difficult to see that each column in \eqref{eq4.2} leads to the projections
\begin{equation}\label{eq4.3}
\begin{array}{c}
\T^*([1-\alpha,2\alpha-1))=[0,3\alpha-2),
\\
\T^*([2\alpha-1,\alpha))=[3\alpha-2,2\alpha-1),
\\
\T^*([\alpha,1))=[2\alpha-1,\alpha),
\\
\T^*([0,1-\alpha))=[\alpha,1),
\end{array}
\end{equation}
where $\T^*$ represents the modulo one projection of the second coordinates of~$\T_\alpha$.
It is clear from \eqref{eq4.3} that the images in each column covers the unit interval $[0,1)$ precisely once,
so the unit interval $[0,1)$ is covered twice.

We can also describe the interval exchange transformation $\T_\alpha$ by the directed graph in the picture on the left in Figure~20
or by the undirected graph in the picture on the right in Figure~20,
each constructed using the information given by \eqref{eq4.2}.
In both graphs, the $10$ vertices are the intervals
\begin{equation}\label{eq4.4}
L_{1,1},L_{1,2},L_{1,3},L_{1,4},L_{2,1},L_{2,2},L_{2,3},L_3,L_{4,1},L_{4,2},
\end{equation}
For the graph on the left, a directed edge $L'\to L''$ from a vertex $L'$ to a vertex $L''$
indicates that there exist points $P'\in L'$ and $P''\in L''$ such that $P''=\T_\alpha(P')$.
For the graph on the right, an undirected edge $L'L''$ between vertices $L'$ and $L''$
indicates that there exist points $P'\in L'$ and $P''\in L''$ such that
$P''=\T_\alpha(P')$ or $P'=\T_\alpha(P'')$.
This undirected graph is sometimes known as the \textit{overlapping graph} of the interval exchange transformation.
Clearly this is a connected graph.

\begin{displaymath}
\begin{array}{c}
\includegraphics[scale=0.8]{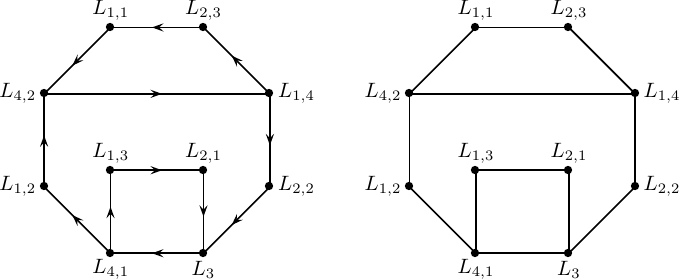}
\\
\mbox{Figure 20: a directed graph and the overlapping graph}
\end{array}
\end{displaymath}

\begin{remark}
Figure~21 shows the recurrent set $\RRR(\PPP;\alpha)$ as a parallelogram
after we have performed some \textit{fake} surgery on the system~$\PPP$, where
we simply take the various pieces of the set and build a parallelogram like a jigsaw puzzle.
Note, however, that the two sides of the dotted line segment between the points $(1,1)$ and $(2,1+\alpha)$ are not neighbours.

\begin{displaymath}
\begin{array}{c}
\includegraphics[scale=0.8]{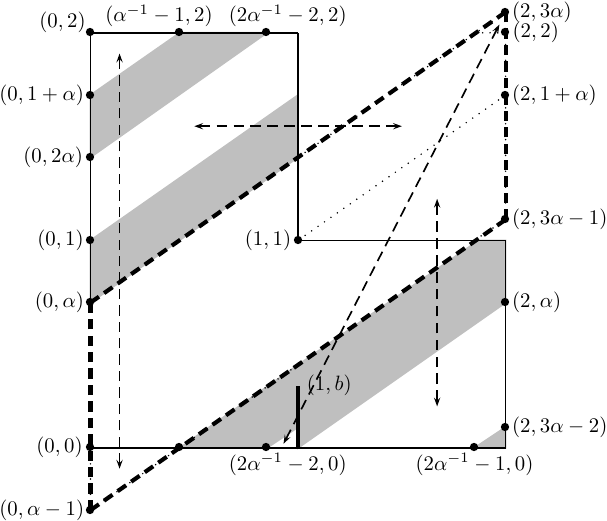}
\\
\mbox{Figure 21: visualization of the recurrent set as a parallelogram}
\end{array}
\end{displaymath}

On the other hand, let us return to Figure~19.
Elementary calculations show that
the part of the recurrent set $\RRR(\PPP;\alpha)$ in the right atomic square has area equal to $2-2\alpha$, 
the part of the recurrent set $\RRR(\PPP;\alpha)$ in the top atomic square has area equal to $2-\alpha^{-1}$, 
while the part of the recurrent set $\RRR(\PPP;\alpha)$ in the bottom left atomic square has area equal to $2\alpha+\alpha^{-1}-2$.
In the special case when $\alpha=1/\sqrt{2}$, these three numbers are respectively roughly equal to $0.586$, $0.586$ and $0.828$,
none of which is close to~$2/3$.
This means that if the dissipative flow of slope $\alpha$ on the recurrent set $\RRR(\PPP;\alpha)$ is uniformly distributed,
then the right atomic square and the top atomic square are \textit{under-visited}, whereas the bottom left atomic square
is \textit{over-visited}.
\end{remark}

Recall that the $2$-cyclic symmetry of the system is a crucial ingredient in our argument
in Section~\ref{sec2} where we consider the modified $2$-square torus.
For the dissipative system $\PPP$ obtained from the L-surface by the inclusion of a one-sided barrier
on the common vertical edge of the bottom atomic squares,
one cannot expect to have any such symmetry property.
However, in the special case when the barrier $B$ and the slope $\alpha$ of the flow
satisfy the special conditions \eqref{eq4.1}, which accounts for uncountably many cases,
we can nevertheless recover some form of symmetry which proves to be very useful.

In Figure~22, the recurrent set $\RRR(\PPP;\alpha)$ of area $2$ is in white
and the transient set $\WWW(\PPP;\alpha)$ of area $1$ is in gray.
Like in Figure~6 for the modified $2$-square torus,
they split the left side of the barrier $B$ and the left side of the corresponding interval $A$ in an essentially fair way.

\begin{displaymath}
\begin{array}{c}
\includegraphics[scale=0.8]{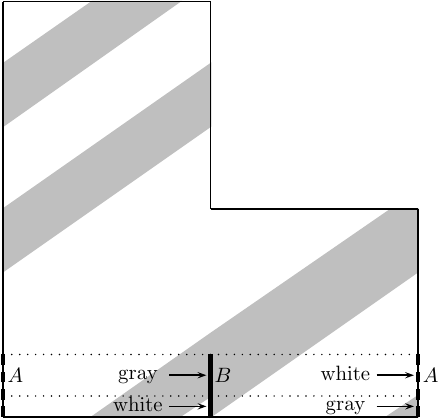}
\\
\mbox{Figure 22: some form of symmetry}
\end{array}
\end{displaymath}

The dissipative flow of slope $\alpha$ moves $B$ first to the gray part in Figure~22
which is $\WWW(\PPP;\alpha)$.
When it reaches the gray part immediately to the left of $B$ or the gray part
immediately to the left of $A$ on right vertical edge, it then jumps to~$A$.
Thus every point of $\WWW(\PPP;\alpha)$ is eventually moved to $A$ by the flow.

Starting from anywhere within the set $\RRR(\PPP;\alpha)$,
the dissipative flow of slope $\alpha$ moves within the white part in Figure~22
which is the set $\RRR(\PPP;\alpha)$.
When it reaches the white part immediately to the left of $B$ or the white part immediately to the left of $A$ on the right vertical edge,
it then \textit{jumps} back to~$A$.
The fair split of left side of the barrier $B$ and the left side of the corresponding interval $A$
implies that this jump is $1$-to-$1$ and measure preserving.
Thus the dissipative flow of slope $\alpha$ restricted to the set $\RRR(\PPP;\alpha)$ is both invertible and measure preserving.
Starting from $A$ and sweeping it along by the dissipative flow of slope~$\alpha$,
this flow returns to $A$ infinitely many times in a $1$-to-$1$ and measure preserving way.

However, since the recurrent set $\RRR(\PPP;\alpha)$ has area~$2$, we cannot simply project to the unit torus $[0,1)^2$
and make use of the integrable geodesic flow there of irrational slope~$\alpha$, as in the case of the modified $2$-square torus.

Observe from \eqref{eq4.2} and \eqref{eq4.3} that the $y$-coordinates of the endpoints of the $10$ subintervals
in \eqref{eq4.4} modulo~$1$ are $\{-\alpha\},0,\{\alpha\},\{2\alpha\},\{3\alpha\}$.

For any irrational number $\alpha>0$ and integer $s\ge2$, suppose that the endpoints of the participating intervals modulo~$1$
of the interval exchange transformation $\T_\alpha:[0,s)\to[0,s)$ have values
\textcolor{white}{xxxxxxxxxxxxxxxxxxxxxxxxxxxxxx}
\begin{equation}\label{eq4.5}
\{-\alpha\},0,\{\alpha\},\{2\alpha\},\ldots,\{\ell_0\alpha\},
\end{equation}
where $\ell_0$ is a fixed positive integer, and that $\T_\alpha$ modulo one acts as the $\alpha$-shift in the unit torus $[0,1)$.
Then the $\ell_0+2$ points in \eqref{eq4.5} divide the unit torus $[0,1)$ into $\ell_0+2$ critical subintervals.
Lifting these critical subintervals in $[0,1)$ to $[0,s)$, we arrive at $s(\ell_0+2)$ critical subintervals in $[0,s)$.
If $S_0\subset[0,s)$ is a non-trivial measurable $\T_\alpha$-invariant subset, then $S_0$ must be a union of some of these
$s(\ell_0+2)$ critical subintervals.

We now use the case $s=2$ and $\ell_0=3$ of the following result which can be proved by a straightforward application of the method
of \cite[Lemma~4.1]{BCY23}.

\begin{lemma}\label{lem41}
If the overlapping graph of the $s(\ell_0+2)$ critical subintervals of the interval exchange transformation $\T_\alpha:[0,s)\to[0,s)$
is connected, then $\T_\alpha$ is ergodic.
Furthermore, $\T_\alpha$ is uniquely ergodic.
\end{lemma}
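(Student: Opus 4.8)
The plan is to establish ergodicity first and then bootstrap to unique ergodicity, following the method of \cite[Lemma~4.1]{BCY23}.

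For ergodicity, suppose for contradiction that $S_0\subset[0,s)$ is a measurable $\T_\alpha$-invariant set with $0<\lambda(S_0)<s$, where $\lambda$ denotes $1$-dimensional Lebesgue measure. By the structural observation recorded immediately before the lemma, $S_0$ is, up to a null set, a union of some of the $s(\ell_0+2)$ critical subintervals. This lets us regard $S_0$ as a choice of vertices in the overlapping graph: colour a vertex \emph{black} if the corresponding critical subinterval lies in $S_0$ and \emph{white} otherwise, so that by hypothesis both colours occur. The crux is that no edge can join a black vertex to a white one. Indeed, since $\T_\alpha$ restricted to any critical subinterval $I$ is a translation, an edge $I'I''$ means that a positive-measure subinterval of one of them is carried by $\T_\alpha$ onto a subinterval of the other; the existence of a single interior point $P'\in I'$ with $\T_\alpha(P')\in I''$ already forces this, because a translation is an open map. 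If $I'$ is black, then $\T_\alpha(I')\subset\T_\alpha(S_0)=S_0$, so the image meets $I''$ in positive measure and $I''$ cannot be white; the symmetric argument using $\T_\alpha^{-1}(S_0)=S_0$ handles an edge realised as $\T_\alpha(I'')\cap I'$. Hence the black and white vertices would disconnect the overlapping graph, contradicting its connectivity. Therefore $S_0$ is null or co-null, and $\T_\alpha$ is ergodic with respect to $\lambda$.

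For unique ergodicity, I would exploit that the modulo-one projection $\pi(x)=\{x\}$ exhibits $\T_\alpha$ as an $s$-to-$1$ extension of the circle rotation $R_\alpha\colon y\mapsto\{y+\alpha\}$, which is uniquely ergodic since $\alpha$ is irrational. Let $\mu$ be any $\T_\alpha$-invariant Borel probability measure on $[0,s)$. Its push-forward $\pi_*\mu$ is $R_\alpha$-invariant, hence equals Lebesgue measure on $[0,1)$. Disintegrating $\mu$ over this base gives fibre weights $p_0(y),\dots,p_{s-1}(y)$ summing to $1$, where $p_k(y)$ is the mass that $\mu$ places on the sheet $y+k$; invariance of $\mu$ becomes the cocycle relation that the weight vector over $\{y+\alpha\}$ is obtained from the weight vector over $y$ by the sheet permutation $\sigma_y$, which is constant on each critical subinterval. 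The same propagation used for sets now applies to the weights: transitivity of the sheet permutations along the edges of the connected overlapping graph, together with ergodicity of $R_\alpha$, forces the weight vector to be almost surely uniform, so that $\mu=s^{-1}\lambda$ on $[0,s)$. Alternatively, one may simply invoke the classical principle that an ergodic finite isometric extension of a uniquely ergodic system is itself uniquely ergodic.

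I expect the main obstacle to lie in the second part, namely promoting the qualitative no-crossing-edge argument to the quantitative statement that the fibre weights are forced to be uniform. The set-level argument needs only the black-or-white dichotomy, whereas the measure-level argument must track the actual values $p_k(y)$ through the cocycle and rule out any nonuniform $\sigma_y$-equivariant assignment; making this rigorous is where the connectivity hypothesis must be used most carefully, and it is the reason the cited general principle for finite extensions is a convenient shortcut. The first part, by contrast, is a routine graph-connectivity argument once the structural fact that invariant sets are unions of critical subintervals is in hand.
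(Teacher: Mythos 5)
Your proposal is correct and follows precisely the route the paper intends: the paper itself contains no written-out proof of this lemma, deferring to the method of \cite[Lemma~4.1]{BCY23}, but the structural fact about invariant sets and the overlapping graph introduced immediately before the lemma are set up exactly for your black/white connectivity argument, which you execute correctly (including the observation that a single incidence point between critical subintervals forces a positive-measure overlap, since $\T_\alpha$ is a translation on each critical subinterval). The step you flag as the main obstacle in the second half is in fact routine and needs no fresh use of connectivity: the multiset of fibre weights $\{p_0(y),\ldots,p_{s-1}(y)\}$ is invariant along the rotation orbit and hence a.e.\ constant, and the set of points $y+k$ whose sheet $k$ carries the maximal weight is then a $\T_\alpha$-invariant subset of $[0,s)$ of Lebesgue measure equal to the multiplicity of that maximal weight, so the ergodicity established in your first part forces that multiplicity to be $s$, all weights to equal $1/s$, and $\mu=s^{-1}\lambda$ --- which is exactly the content of the classical finite-extension principle you cite as the alternative shortcut.
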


The special case \eqref{eq4.1} of the modified L-surface turns out to be a lucky case where we can determine
the recurrent set $\RRR(\PPP;\alpha)$ and the transient set $\WWW(\PPP;\alpha)$ without too much difficulty.
The situation may be significantly different if the conditions \eqref{eq4.1} are not satisfied.
Consider the case of the system $\PPP$ as illustrated in Figure~23, where $b=0.8$ and $\alpha=1/\sqrt{2}$.
Here we start from $B$ and sweep out a no-go zone which is the gray polygon with a vertical side on~$B$.
We then extend it by the strip with left edge between $(0,b)$ and $(0,1)$, noting that any flow that reaches
any point between $(1,b)$ and $(1,1)$ eventually reaches~$A$.
We can then further extend the no-go zone to the part of $\PPP$ in gray.

\begin{displaymath}
\begin{array}{c}
\includegraphics[scale=0.8]{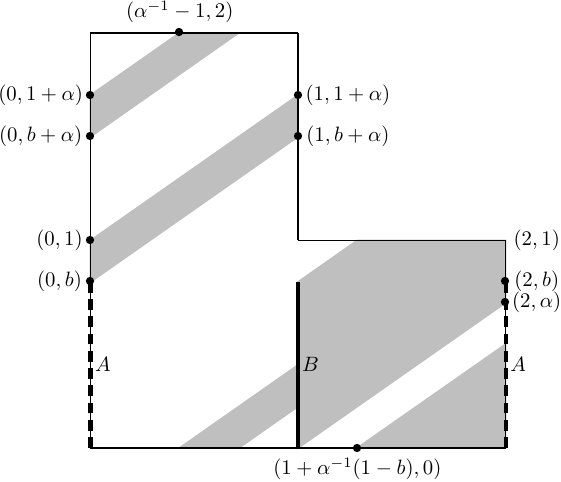}
\\
\mbox{Figure 23: the L-surface modified by the inclusion of a one-sided barrier}
\end{array}
\end{displaymath}

It remains true that any dissipative flow of slope $\alpha$ that reaches the gray part immediately to the left of $B$
or the gray part immediately to the left of $A$ on the right vertical edge continues from immediately to the right of $A$ on the left vertical edge,
but there is no guarantee that this is part of the recurrent set $\RRR(\PPP;\alpha)$.
Figure~24 shows a further extension of the no-go zone.
There is a subinterval of $B$ which is gray immediately to the left and it is also gray immediately to the left
of the corresponding subinterval of~$A$.
This leads to an extension of the no-go zone which is shown in dark gray.
Clearly the set $\frakM(\PPP;B;\alpha)$ is a proper subset of the transient set $\WWW(\PPP;\alpha)$,
and the recurrent set $\RRR(\PPP;\alpha)$ is a proper subset of the set $\frakR(\PPP;A;\alpha)$.

\begin{displaymath}
\begin{array}{c}
\includegraphics[scale=0.8]{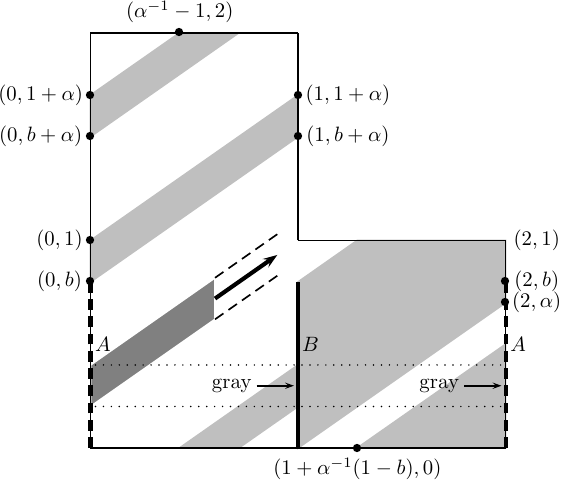}
\\
\mbox{Figure 24: a further extension of the no-go zone}
\end{array}
\end{displaymath}

Another interesting observation is illustrated by Figure~25 which highlights part of the recurrent set $\RRR(\PPP;\alpha)$.

\begin{displaymath}
\begin{array}{c}
\includegraphics[scale=0.8]{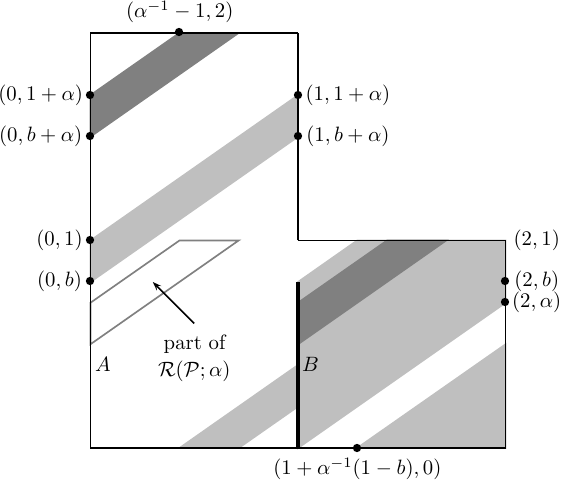}
\\
\mbox{Figure 25: part of the recurrent set}
\end{array}
\end{displaymath}

We see from Figure~24 that both trapezoids in dark gray in Figure~25
are part of the set $\frakM(\PPP;B;\alpha)$, hence part of the transient set $\WWW(\PPP;\alpha)$.
These two trapezoids have the same projection on to the unit torus $[0,1)^2$.
Since the projection of the dissipative flow of slope $\alpha$ to the unit torus $[0,1)^2$ is
integrable geodesic flow of slope~$\alpha$ which is uniformly distributed if $\alpha$ is irrational,
it follows that the white trapezoid in the bottom left atomic square in Figure~25
must be part of the recurrent set $\RRR(\PPP;\alpha)$.

It is clear from Figure~24 that the reverse flow recipe does not work in this case
and does not lead to the recurrent set $\RRR(\PPP;\alpha)$.
However, Figure~25 clearly gives a non-trivial open subset of the recurrent set $\RRR(\PPP;\alpha)$,
so the recurrent set $\RRR(\PPP;\alpha)$ can be obtained by sweeping this open subset along
by the dissipative flow of slope~$\alpha$, and the construction is complete in a finite number of zig-zaggings;
see the Remark after the proof of Theorem~\ref{thm1}.
The recurrent set $\RRR(\PPP;\alpha)$ is then a finite union of polygons with boundary edges that
are vertical or of slope~$\alpha$.

Again, we are lucky in this case, as the recurrent set $\RRR(\PPP;\alpha)$ has area $1$,
and uniformity follows from the uniformity of geodesic flow of slope $\alpha$ in the unit torus $[0,1)^2$
if $\alpha$ is irrational.

We emphasize again that in this case,
the set $\frakM(\PPP;B;\alpha)$ is a proper subset of the transient set $\WWW(\PPP;\alpha)$,
and the recurrent set $\RRR(\PPP;\alpha)$ is a proper subset of the set $\frakR(\PPP;A;\alpha)$.

As for determining the transient set $\WWW(\PPP;\alpha)$, Figure~24 gives
the first step of an extension process through which we can \textit{grow} the dark gray strip.
We discuss this in the next section where we attempt to obtain a generalization of Theorem~\ref{thm1}
to modifications of arbitrary finite polysquare translation surfaces.

Whereas for the $2$-square torus or the L-surface, the choice of the interval $A$ follows naturally from the one-sided barrier~$B$,
this is no longer the case if we start with a polysquare translation surface.

Figure~26 shows that in the case of a translation surface comprising $5$ atomic squares in the form of a cross and where
edge identification comes from perpendicular translation, there is more than one choice for the interval~$A$.
The only requirement is that $A$ lies on a different vertical edge on the same horizontal street that contains~$B$.
If some of the edge identifications do not come from perpendicular translation, the situation looks rather more complicated.
Figure~27 shows two equivalent dissipative systems with the interval $A$ on the vertical edge~$v_2$,
while Figure~28 shows two equivalent dissipative systems with the interval $A$ on the vertical edge~$v_4$.

\begin{displaymath}
\begin{array}{c}
\includegraphics[scale=0.8]{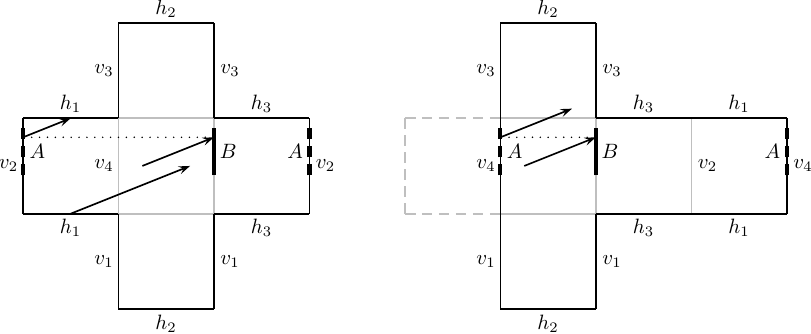}
\\
\mbox{Figure 26: dissipative systems modified from polysquare translation surfaces}
\end{array}
\end{displaymath}
\begin{displaymath}
\begin{array}{c}
\includegraphics[scale=0.8]{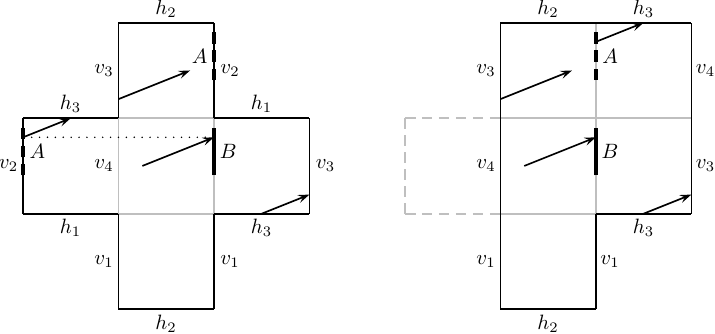}
\\
\mbox{Figure 27: dissipative systems modified from polysquare translation surfaces}
\end{array}
\end{displaymath}
\begin{displaymath}
\begin{array}{c}
\includegraphics[scale=0.8]{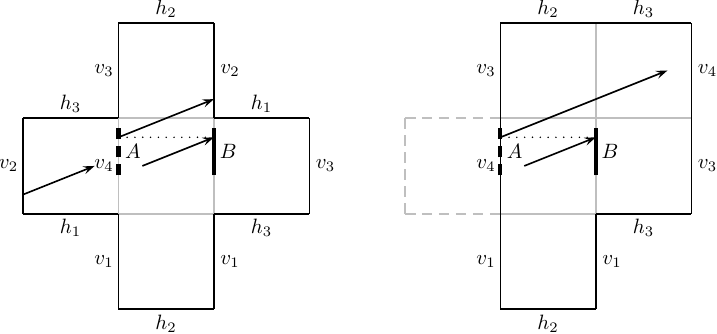}
\\
\mbox{Figure 28: dissipative systems modified from polysquare translation surfaces}
\end{array}
\end{displaymath}

In the next section, we establish the following result.

\begin{theorem}\label{thm4}
Consider dissipative flow of slope~$\alpha$, where $\alpha>0$ is irrational, on a system $\PPP$,
where a finite polysquare translation surface has beem modified with the inclusion of a one-sided barrier $B$ on the common vertical edge
of two neighbouring atomic squares, in the form of a union of finitely many vertical intervals,
and where a set $A$ corresponding to $B$ has been chosen on a different vertical edge on the same horizontal street that contains~$B$.
Then the recurrent set $\RRR(\PPP;\alpha)$ and the transient set $\WWW(\PPP;\alpha)$ can be constructed by an extension process
on the no-go zone $\NNN(\PPP;\alpha)$.
Furthermore, if this process is finite, then both sets $\RRR(\PPP;\alpha)$ and $\WWW(\PPP;\alpha)$ are finite unions of polygons
with boundary edges that are vertical or of slope~$\alpha$.
\end{theorem}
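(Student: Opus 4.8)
The plan is to realize the \emph{extension process} as a monotone two-sided procedure that simultaneously grows a transient estimate $\WWW_k$ and a recurrent estimate $\RRR_k$, both seeded by the no-go zone $\NNN(\PPP;\alpha)$, and then to show that upon termination these coincide with the true sets $\WWW(\PPP;\alpha)$ and $\RRR(\PPP;\alpha)$. First I would set $\WWW_0=\frakM(\PPP;B;\alpha)$, the extension of $\NNN(\PPP;\alpha)$ furnished by the reverse flow recipe; exactly as in Step~1 and Step~2 of the proof of Theorem~\ref{thm1}, every point of $\frakM(\PPP;B;\alpha)$ is wandering, so $\WWW_0\subset\WWW(\PPP;\alpha)$. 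The irrational rotation lemma guarantees that this initial no-go extension is completed in finitely many zig-zaggings, so $\WWW_0$ is already a finite union of polygons whose edges are vertical or of slope~$\alpha$, and dually the forward sweep of $A$ gives a first recurrent estimate $\RRR_0\subset\frakR(\PPP;A;\alpha)$.

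The engine of the process is then an alternation of two operations. The \textbf{reverse-flow operation} adjoins to the current transient estimate every point $P$ whose forward dissipative orbit of slope~$\alpha$ enters the open set $\WWW_k$: such a $P$ is necessarily wandering, since $\RRR(\PPP;\alpha)$ is forward-invariant and disjoint from $\WWW_k$, so $\T_\alpha^{(t_1)}(P)\in\WWW_k$ forces $P\notin\RRR(\PPP;\alpha)$, and the neighbourhood argument behind \eqref{eq2.2} then applies once the orbit has been swept into the established no-go region. The \textbf{forced-recurrence operation} uses the projection of the dissipative flow to the unit torus $[0,1)^2$, where it is the uniquely ergodic geodesic flow of irrational slope~$\alpha$. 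Since $\PPP$ is built from $N$ atomic squares, each projecting bijectively, every torus point has exactly $N$ preimages in $\PPP$; and since $\RRR(\PPP;\alpha)$ is flow-invariant, the number of its preimages over a torus point is a flow-invariant function, hence constant almost everywhere by unique ergodicity, equal to some integer $m=\lambda_2(\RRR(\PPP;\alpha))$. Consequently, over a torus region on which all preimages but one have already been certified transient, the remaining preimage is forced to lie in $\RRR(\PPP;\alpha)$, because the projection of $\RRR(\PPP;\alpha)$ has full torus measure. This is precisely the mechanism illustrated in Figure~25, where two transient trapezoids with the same torus projection force the third, complementary trapezoid to be recurrent. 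Each newly certified recurrent region is then swept forward by the dissipative flow, exactly as in the construction of $\frakR(\PPP;A;\alpha)$, yielding $\RRR_{k+1}\subset\RRR(\PPP;\alpha)$; and each newly certified recurrent region also feeds the reverse-flow operation, which can now certify further transient points.

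The sequences $\WWW_k$ and $\RRR_k$ are increasing and disjoint, with $\WWW_k\subset\WWW(\PPP;\alpha)$ and $\RRR_k\subset\RRR(\PPP;\alpha)$ at every stage. When the process terminates, no further reverse-flow or forced-recurrence extension is possible, and I would argue exactness as follows. On the terminal recurrent estimate the first-return map to $A$ is $1$-to-$1$ and measure preserving, by the fair-split reasoning already used for the modified $2$-square torus and the modified L-surface, so the Poincar\'e recurrence theorem shows that every point of this estimate is non-wandering; dually, the reverse flow recipe shows that every point of the terminal transient estimate is wandering. The crux is then to verify that the two terminal estimates, which are disjoint, together exhaust $\PPP$ up to a set of $2$-dimensional Lebesgue measure~$0$, equivalently that their areas sum to $N$; granting this, the terminal sets coincide with $\WWW(\PPP;\alpha)$ and $\RRR(\PPP;\alpha)$ apart from sets of measure~$0$.

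For the conditional polygonal statement, I would observe that every elementary operation cuts $\PPP$ only along vertical segments — edges of the atomic squares and of $B$ and $A$ — and along dissipative flow lines of slope~$\alpha$, and that the forward sweep of such a region again produces a finite union of polygons with vertical or slope-$\alpha$ edges; hence if the alternation halts after finitely many steps, both $\RRR(\PPP;\alpha)$ and $\WWW(\PPP;\alpha)$ inherit this structure. The hard part will be the forced-recurrence operation together with the terminal exactness argument: one must control the constant multiplicity $m$ with which $\RRR(\PPP;\alpha)$ covers $[0,1)^2$ without knowing it in advance, and must rule out a residual region that is neither certified transient by reverse flow nor forced recurrent by the projection argument. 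The finiteness of the alternation itself is \emph{not} established here — it is the conditional hypothesis — and is the subject of Section~\ref{sec6}.
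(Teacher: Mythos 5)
There is a genuine gap, and it sits exactly where you flag it: the ``terminal exactness'' you ask the reader to grant \emph{is} the first assertion of the theorem, so the proposal proves the conditional polygonal statement but not the construction claim itself. Moreover, the two certification operations driving your alternation are unsound as stated. First, your seed $\RRR_0$, the forward sweep of~$A$, need not lie in $\RRR(\PPP;\alpha)$: the paper's own example in Figures 23--24 (the L-surface with $b=0.8$, $\alpha=1/\sqrt{2}$) shows that $\RRR(\PPP;\alpha)$ is a \emph{proper} subset of $\frakR(\PPP;A;\alpha)$, so sweeping all of $A$ certifies genuinely transient points as recurrent, and the invariant $\RRR_k\subset\RRR(\PPP;\alpha)$ fails at $k=0$. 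Second, your forced-recurrence operation fires only when all preimages but one over a torus region are already certified transient, so it can never reach a recurrent component of fiber multiplicity $\ge2$; in the modified L-surface of Figure~18 under the conditions \eqref{eq4.1}, the attractor has area~$2$ over a $3$-square surface, every torus point has two recurrent preimages, and the operation is永 inert, leaving the alternation unable to certify most of $\RRR(\PPP;\alpha)$. Third, the a.e.-constancy of the multiplicity function of $\RRR(\PPP;\alpha)$ is asserted from ``flow-invariance'', but the dissipative flow is not injective: two flow lines merge at the jump from $B$ to~$A$, so the fiber count can drop along the flow unless the restricted flow on $\RRR(\PPP;\alpha)$ is already known to be $1$-to-$1$ and measure preserving --- which is the ``fair split'' property that fails precisely in the unbalanced case and is the thing to be established. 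The same circularity infects your terminal step, where the $1$-to-$1$ measure-preserving return to $A$ is invoked without any mechanism guaranteeing balance at termination.

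The paper avoids all three difficulties by working on the one-dimensional cross-section $A\cup B$ rather than on the two-dimensional system. The time-quantitative Gutkin--Veech theorem makes the first-return map $\T^*=\T\vert_{A\cup B}$ a finite interval exchange via Kakutani's Lemma~\ref{lem51}, and Kac's Lemma~\ref{lem52} gives $\lambda_2(\FFF(A\cup B))=s$, so $\FFF(A)\cup\FFF(B)=\PPP$ up to measure~$0$: the exhaustion you leave open comes for free from the return-time formula. Balance is then tested by comparing the $y$-coordinates of $\T^*(A)\cap A$ and $\T^*(A)\cap B$ (conditions \eqref{eq5.12} versus \eqref{eq5.15}); in the balanced case one gets \eqref{eq5.14} directly, and in the unbalanced case the extension process is a \emph{monotone shrinking} $A\supset A_1\supset A_2\supset\cdots$ with $A_\infty=\bigcap_{i\ge1}A_i$, each step discarding $\FFF(A_{i-1}\setminus A_i)$ into the transient set, so that on $A_\infty$ the jump is $1$-to-$1$ and measure preserving \emph{by construction}; Poincar\'{e} recurrence, supplemented by the Lebesgue density theorem when the process is infinite, then yields \eqref{eq5.16}. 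Your observation that vertical and slope-$\alpha$ cuts propagate through finitely many sweeps, giving the polygonal conclusion in the finite case, does match the paper; but to repair the proof you would need to replace the two-sided certification scheme by this one-dimensional monotone one, or else supply an independent proof that the fiber multiplicity of $\RRR(\PPP;\alpha)$ is invariant, which the merging at the barrier obstructs.
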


%
%

\section{Extension process and polygons}\label{sec5}

We make use of two well known results in ergodic theory.
Suppose that $(X,\BBB,\mu)$ is a probability space, so that $\mu(X)=1$, and $\T$ is a measure preserving transformation on~$X$.
For any measurable subset $S\subset X$ with positive measure $\mu(S)>0$,
since the Poincare recurrence theorem guarantees infinite recurrence for $\mu$-almost every $x\in S$ to the set~$S$,
it is meaningful to define the \textit{first return time} for any such $x\in S$.
This is given by the integer
\textcolor{white}{xxxxxxxxxxxxxxxxxxxxxxxxxxxxxx}
\begin{displaymath}
n_S(x)=\inf\{n\ge1:\T^n(x)\in S\},
\end{displaymath}
and clearly it is finite for $\mu$-almost every $x\in S$.

The first result is due to Kakutani~\cite{kakutani43} in 1943, and concerns an induced map of $\T$ on~$S$.

\begin{lemma}\label{lem51}
Suppose that $(X,\BBB,\mu)$ is a probability space, so that $\mu(X)=1$,
that $\T$ is an invertible measure preserving transformation on~$X$,
and that $S\subset X$ is a measurable subset with positive measure $\mu(S)>0$.
Then the induced map
\begin{displaymath}
\T\vert_S:S\to S:x\mapsto \T^{n_S(x)}(x),
\end{displaymath}
valid for $\mu$-almost every $x\in S$, is a measure preserving transformation on the induced measure space $(S,\BBB\cap S,\mu_S)$.
\end{lemma}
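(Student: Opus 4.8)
The plan is to reduce the assertion to the single measure identity
\[
\mu\bigl((\T\vert_S)^{-1}(E)\bigr)=\mu(E)
\qquad\text{for every measurable } E\subseteq S,
\]
since the induced measure $\mu_S$ is just the restriction of $\mu$ to $\BBB\cap S$ divided by the constant $\mu(S)$, so that measure preservation for $\mu_S$ is equivalent to the displayed identity for $\mu$. First I would record that the Poincar\'e recurrence theorem, applied to $\T$ and the set $S$, guarantees $n_S(x)<\infty$ for $\mu$-almost every $x\in S$, so that $\T\vert_S$ is well defined almost everywhere and does map $S$ into $S$. The natural first move is then to slice the base according to the forward first return time, writing
\[
S_n=\{x\in S:n_S(x)=n\},\qquad n\ge1,
\]
so that $S=\bigsqcup_{n\ge1}S_n$ up to a null set and $\T\vert_S$ acts as $\T^n$ on $S_n$. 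Consequently
\[
(\T\vert_S)^{-1}(E)\cap S_n=S_n\cap\T^{-n}E,
\]
and the task becomes the evaluation of $\sum_{n\ge1}\mu(S_n\cap\T^{-n}E)$.

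The key idea is to match this forward decomposition of the preimage against a \emph{backward} first return decomposition of the target set $E$. Applying the Poincar\'e recurrence theorem to the inverse map $\T^{-1}$ (which is also measure preserving, as $\T$ is invertible), almost every $z\in S$ has some backward iterate lying in $S$, so I would set
\[
E_n^{*}=\{z\in E:\T^{-j}z\notin S\ (1\le j\le n-1),\ \T^{-n}z\in S\},
\]
and obtain $E=\bigsqcup_{n\ge1}E_n^{*}$ up to a null set. The heart of the proof is the floor-by-floor identity
\[
\T^{-n}(E_n^{*})=S_n\cap\T^{-n}E=(\T\vert_S)^{-1}(E)\cap S_n,
\]
which one checks by direct verification: writing $x=\T^{-n}z$, the condition $z\in E_n^{*}$ says exactly that $x\in S$, that $\T x,\dots,\T^{n-1}x\notin S$, and that $\T^n x=z\in E$, which is precisely $x\in S_n\cap\T^{-n}E$, and conversely. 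Since $\T$ preserves $\mu$, each floor satisfies $\mu(S_n\cap\T^{-n}E)=\mu(\T^{-n}(E_n^{*}))=\mu(E_n^{*})$, and summing over $n$ gives
\[
\mu\bigl((\T\vert_S)^{-1}(E)\bigr)=\sum_{n\ge1}\mu(S_n\cap\T^{-n}E)=\sum_{n\ge1}\mu(E_n^{*})=\mu(E),
\]
as required; normalizing by $\mu(S)$ yields the statement for $\mu_S$.

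The main obstacle, and the step deserving the most care, is the bijection encoded in the floor-by-floor identity together with the two appeals to recurrence in opposite time directions: one must check that the forward-return sets $S_n$ and the backward-return sets $E_n^{*}$ partition $S$ and $E$ respectively up to null sets, that the exceptional sets on which $n_S$ or its backward analogue is infinite are genuinely $\mu$-negligible, and that invertibility of $\T$ is used legitimately so that $\T^{-n}$ carries $E_n^{*}$ bijectively onto $S_n\cap\T^{-n}E$. Everything else is a routine countable summation, and the normalization by $\mu(S)$ to pass from $\mu$ to the probability measure $\mu_S$ is immediate.
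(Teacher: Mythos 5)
Your proof is correct, but it is the mirror image of the paper's argument rather than a reproduction of it. The paper proves the \emph{forward-image} identity $\mu(\T\vert_S A)=\mu(A)$ for measurable $A\subset S$: it decomposes $A$ along the forward return-time sets $S_n$, uses invertibility of $\T$ to see that the images $\T\vert_S(A\cap S_n)=\T^n(A\cap S_n)$ are pairwise disjoint, and sums $\mu(\T^n(A\cap S_n))=\mu(A\cap S_n)$. You instead verify the \emph{preimage} identity $\mu((\T\vert_S)^{-1}E)=\mu(E)$ --- the textbook definition of measure preservation --- by introducing a second, backward first-return decomposition $E=\bigsqcup_{n\ge1}E_n^{*}$ (legitimate: $\T^{-1}$ is measure preserving since $\T$ is invertible, so Poincar\'{e} recurrence applies to it) and matching floors via the identity $\T^{-n}(E_n^{*})=S_n\cap\T^{-n}E$, which holds exactly, not merely up to null sets, and which you check correctly. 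The trade-off is as follows. The paper's route is shorter and needs only the one decomposition, but it measures forward images, so it implicitly requires $\T$ to be bimeasurable for $\T^n(A\cap S_n)$ to be measurable, and it delivers preservation in the image form, which yields the preimage form only via the a.e.\ injectivity of $\T\vert_S$. Your route pays for the extra backward decomposition and a second appeal to recurrence, but every set whose measure you compute is a preimage $\T^{-n}(\cdot)$, so measurability is automatic and only the preimage form $\mu(\T^{-n}B)=\mu(B)$ of measure preservation is invoked; invertibility enters instead through the backward recurrence and the floor-by-floor bijection. Both uses of invertibility are essential --- the lemma fails for non-invertible $\T$ --- so neither proof wastes the hypothesis; they simply spend it in different places.
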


The second result is due to Kac~\cite{kac47} in 1947, and concerns the \textit{average} value of the first return time to~$S$.

\begin{lemma}\label{lem52}
Suppose that $(X,\BBB,\mu)$ is a probability space, so that $\mu(X)=1$,
that $\T$ is a measure preserving transformation on~$X$,
and that $S\subset X$ is a measurable subset with positive measure $\mu(S)>0$.
Suppose further that $\T$ is ergodic in the space $(X,\BBB,\mu)$.
Then
\textcolor{white}{xxxxxxxxxxxxxxxxxxxxxxxxxxxxxx}
\begin{equation}\label{eq5.1}
\int_Sn_S(x)\,\dd\mu=1.
\end{equation}
In other words, the average value of the first return time to the set $S$ is equal to the reciprocal of the measure $\mu(S)$ of~$S$.
\end{lemma}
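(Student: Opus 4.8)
The plan is to establish the identity $\int_Sn_S\,\dd\mu=1$ directly; the reformulation in terms of the average first return time is then immediate, since dividing by $\mu(S)$ turns the left-hand side into the mean of $n_S$ over $S$ with respect to the normalized induced measure, giving exactly $1/\mu(S)$. As $\mu(X)=1$, it is equivalent to prove $\int_Sn_S\,\dd\mu=\mu(X)$, and this is the form I would target, since it exposes the tower structure hidden behind the statement.

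First I would slice $S$ according to the value of the first return time, writing $S=\bigsqcup_{n\ge1}S_n$ with $S_n=\{x\in S:n_S(x)=n\}$; the Poincare recurrence theorem, already invoked above, guarantees that this is a partition of $S$ up to a $\mu$-null set. To avoid any appeal to invertibility of $\T$, I would work instead with the forward hitting time $h(y)=\inf\{k\ge0:\T^k(y)\in S\}$ and its level sets $H_k=\{y\in X:h(y)=k\}$, so that $H_0=S$ and the $H_k$ are pairwise disjoint. The engine of the proof is the elementary set identity $\T^{-1}(H_{k-1})=H_k\sqcup S_k$, valid for each $k\ge1$ up to null sets: a point maps into $H_{k-1}$ precisely when its own orbit first meets $S$ at time $k$, and this splits according to whether the point already lies in $S$ (contributing $S_k$) or not (contributing $H_k$). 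Because $\T$ preserves $\mu$, this yields the recursion $\mu(H_{k-1})=\mu(H_k)+\mu(S_k)$, equivalently $\mu(S_k)=\mu(H_{k-1})-\mu(H_k)$.

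With the recursion in hand I would compute
\begin{displaymath}
\int_Sn_S\,\dd\mu=\sum_{n\ge1}n\,\mu(S_n)=\sum_{n\ge1}n\bigl(\mu(H_{n-1})-\mu(H_n)\bigr),
\end{displaymath}
and then sum by parts. The sequence $\mu(H_k)$ is non-increasing (directly from the recursion) and summable (the $H_k$ are disjoint subsets of a probability space), so the boundary term $N\mu(H_N)$ tends to $0$ and the Abel summation collapses the series to $\sum_{k\ge0}\mu(H_k)=\mu\bigl(\bigsqcup_{k\ge0}H_k\bigr)$. This final set is exactly the set of points whose forward orbit eventually reaches $S$.

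The last and main step is to show that this union has full measure, and this is precisely where ergodicity enters. I would argue that $\bigsqcup_{k\ge0}H_k$ is $\T$-invariant modulo null sets: a point's forward orbit reaches $S$ if and only if the orbit of its image does, the only delicate case being points of $S$ themselves, which is handled by the same Poincare recurrence used to define $n_S$. Since this invariant set contains $S$ and $\mu(S)>0$, ergodicity forces it to have measure $\mu(X)=1$, whence $\int_Sn_S\,\dd\mu=1$. The one point demanding care, and the main obstacle, is the decomposition $\T^{-1}(H_{k-1})=H_k\sqcup S_k$ together with the vanishing of the tail term $N\mu(H_N)$: it is here that one must avoid silently assuming $\T$ is invertible and instead lean only on measure preservation and ergodicity.
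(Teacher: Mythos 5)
Your proof is correct, and it takes a genuinely different route from the paper's. The paper builds the classical Kac skyscraper: it forms the forward images $S_{n,k}=\T^kS_n$ for $k=0,1,\ldots,n-1$, verifies by a case analysis that these sets are pairwise disjoint, identifies
\begin{displaymath}
\sum_{n=1}^\infty\sum_{k=0}^{n-1}\mu(S_{n,k})=\sum_{n=1}^\infty n\mu(S_n)=\int_Sn_S(x)\,\dd\mu,
\end{displaymath}
and then applies ergodicity to the $\T$-invariant union of the tower to conclude that it has measure $1$. You instead slice $X$ by the forward hitting time $h$, derive the exact identity $\T^{-1}(H_{k-1})=H_k\cup S_k$ (a disjoint union, automatically, since $H_k\subset S^c$ while $S_k\subset S$ for $k\ge1$), telescope via $\mu(S_k)=\mu(H_{k-1})-\mu(H_k)$, and sum by parts, using the standard fact that a non-negative, non-increasing, summable sequence satisfies $N\mu(H_N)\to0$; ergodicity then enters only at the end, applied to the set $\{h<\infty\}$, which is invariant modulo the Poincar\'{e}-null set of points of $S$ that never return. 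What your approach buys is robustness: Lemma~\ref{lem52}, unlike Lemma~\ref{lem51}, does not assume that $\T$ is invertible, and your argument uses only preimages, so it is valid exactly under the stated hypotheses, whereas the paper's manipulation of forward images -- in particular the step $\mu(\T^kS_n)=\mu(S_n)$ justified by measure preservation -- silently leans on invertibility, since measure preservation controls only preimages; you explicitly flag and avoid this. What the paper's approach buys is the explicit tower decomposition \eqref{eq5.11} of $X$ into the pieces $\T^kS_n$, which is reused later in the paper (it is cited to derive \eqref{eq6.7} in Section~\ref{sec6}, where the relevant map is an invertible rotation, so no harm arises there); your telescoping argument establishes the integral identity more economically but does not produce this decomposition as a by-product.
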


\begin{proof}[Proof of Lemma~\ref{lem51}]
Note first of all that both $n_S:S\to\Nn$ and $\T\vert_S:S\to S$ are well defined and measurable mappings.
For every integer $n\ge1$, let
\begin{equation}\label{eq5.2}
S_n=\{x\in S:n_S(x)=n\}
\end{equation}
denote the collection of points $x\in S$ that return to $S$ for the first time at the $n$-th application of the transformation~$\T$.
By definition, the sets $S_n$, $n=1,2,3,\ldots,$ are pairwise disjoint.
Furthermore, we have
\begin{displaymath}
S_1=S\cap \T^{-1}S,
\quad
S_2=(S\cap \T^{-2}S)\setminus S_1,
\quad
S_3=(S\cap \T^{-3}S)\setminus(S_1\cup S_2),
\end{displaymath}
and in general
\textcolor{white}{xxxxxxxxxxxxxxxxxxxxxxxxxxxxxx}
\begin{displaymath}
S_n=(S\cap \T^{-n}S)\setminus\bigcup_{1\le m<n}S_m,
\quad
n=1,2,3,\ldots.
\end{displaymath}

Any measurable subset $A\subset S$ can be expressed as a disjoint union in the form
\begin{displaymath}
A=\bigcup_{n=1}^\infty(A\cap S_n),
\end{displaymath}
so it follows immediately that
\textcolor{white}{xxxxxxxxxxxxxxxxxxxxxxxxxxxxxx}
\begin{equation}\label{eq5.3}
\mu(A)=\sum_{n=1}^\infty\mu(A\cap S_n).
\end{equation}
On the other hand, since $\T$ is invertible and so $1$-to-$1$, the image $\T\vert_SA$ can be expressed as a disjoint union in the form
\begin{displaymath}
\T\vert_SA
=\bigcup_{n=1}^\infty \T\vert_S(A\cap S_n),
\end{displaymath}
so it follows immediately that
\begin{equation}\label{eq5.4}
\mu(\T\vert_SA)
=\sum_{n=1}^\infty\mu(\T\vert_S(A\cap S_n))
=\sum_{n=1}^\infty\mu(\T^n(A\cap S_n)),
\end{equation}
where in the last step, we use the key fact that for every fixed $n=1,2,3,\ldots,$
if a subset $B\subset S_n$ is measurable, then $\T\vert_SB=\T^nB$.
Finally, since $\T$ is invertible and preserves~$\mu$, it follows that
\begin{equation}\label{eq5.5}
\sum_{n=1}^\infty\mu(\T^n(A\cap S_n))=\sum_{n=1}^\infty\mu(A\cap S_n).
\end{equation}
It now follows on combining \eqref{eq5.3}--\eqref{eq5.5} that $\mu(\T\vert_SA)=\mu(A)$, and completes the proof.
\end{proof}

\begin{proof}[Proof of Lemma~\ref{lem52}]
For every $n=1,2,3,\ldots,$ the set $S_n$ defined by \eqref{eq5.2} clearly satisfies
\textcolor{white}{xxxxxxxxxxxxxxxxxxxxxxxxxxxxxx}
\begin{displaymath}
S_n=S_{n,0}=S\cap \T^{-1}S^c\cap\ldots\cap \T^{-(n-1)}S^c\cap \T^{-n}S,
\end{displaymath}
where $S^c=X\setminus S$ denotes the complement of the set $S$ in~$X$.
Furthermore, for every $n=1,2,3,\ldots$ and every $k=1,\ldots,n-1$, write
\begin{equation}\label{eq5.6}
S_{n,k}
=\T^kS_n
=\T^kS\cap \T^{k-1}S^c\cap\ldots\cap \T^{k-n+1}S^c\cap \T^{k-n}S.
\end{equation}
We first show that the sets
\begin{equation}\label{eq5.7}
S_{n,k},
\quad
n=1,2,3,\ldots,
\quad
k=0,1,\ldots,n-1,
\end{equation}
are pairwise disjoint.
To justify this claim, consider first the sets $S_{n,k'}$ and $S_{n,k''}$, where $0\le k'<k''\le n-1$.
Then it follows from \eqref{eq5.6} that
\begin{displaymath}
S_{n,k'}\subset \T^{k'}S
\quad\mbox{and}\quad
S_{n,k''}\subset \T^{k'}S^c,
\end{displaymath}
and so
\textcolor{white}{xxxxxxxxxxxxxxxxxxxxxxxxxxxxxx}
\begin{displaymath}
S_{n,k'}\cap S_{n,k''}\subset \T^{k'}S\cap \T^{k'}S^c=\emptyset.
\end{displaymath}
Suppose next that $n'<n''$, $k'=0,1,\ldots,n'-1$ and $k''=0,1,\ldots,n''-1$.
Then it follows from \eqref{eq5.6} that
\begin{displaymath}
S_{n',k'}\subset \T^{k'}S\cap \T^{k'-n'}S
\quad\mbox{and}\quad
S_{n'',k''}\subset \T^{k''-1}S^c\cap\ldots\cap \T^{k''-n''+1}S^c.
\end{displaymath}
If $k'\le k''-1$, then clearly $S_{n'',k''}\subset \T^{k'}S^c$, and so
\begin{displaymath}
S_{n',k'}\cap S_{n'',k''}\subset \T^{k'}S\cap \T^{k'}S^c=\emptyset.
\end{displaymath}
On the other hand, if $k'>k''-1$, then $-k'\le-k''$.
Since $n'<n''$, it follows that $n'-k'<n''-k''$, and so $k'-n'\ge k''-n''+1$.
Then clearly $S_{n'',k''}\subset \T^{k'-n'}S^c$, and so
\textcolor{white}{xxxxxxxxxxxxxxxxxxxxxxxxxxxxxx}
\begin{displaymath}
S_{n',k'}\cap S_{n'',k''}\subset \T^{k'-n'}S\cap \T^{k'-n'}S^c=\emptyset.
\end{displaymath}
Thus the sets \eqref{eq5.7} are pairwise disjoint as claimed.
We therefore have
\begin{equation}\label{eq5.8}
\mu\left(\bigcup_{n=1}^\infty\bigcup_{k=0}^{n-1}S_{n.k}\right)
=\sum_{n=1}^\infty\sum_{k=0}^{n-1}\mu(S_{n,k}).
\end{equation}
Since $\T$ is measure preserving, it follows from \eqref{eq5.6} that $\mu(S_{n,k})=\mu(S_n)$
for every $k=0,1,\ldots,n-1$.
Combining this with \eqref{eq5.2}, we deduce that
\begin{equation}\label{eq5.9}
\sum_{n=1}^\infty\sum_{k=0}^{n-1}\mu(S_{n,k})
=\sum_{n=1}^\infty n\mu(S_n)
=\sum_{n=1}^\infty\int_{S_n}n_S(x)\,\dd\mu.
\end{equation}
The Poincare recurrence theorem now gives
\begin{displaymath}
\bigcup_{n=1}^\infty S_n=S.
\end{displaymath}
Since this is clearly a pairwise disjoint union, it follows that
\begin{equation}\label{eq5.10}
\sum_{n=1}^\infty\int_{S_n}n_S(x)\,\dd\mu=\int_Sn_S(x)\,\dd\mu.
\end{equation}
Finally, the pairwise disjoint union
\textcolor{white}{xxxxxxxxxxxxxxxxxxxxxxxxxxxxxx}
\begin{displaymath}
\bigcup_{n=1}^\infty\bigcup_{k=0}^{n-1}S_{n.k}
\end{displaymath}
is $\T$-invariant, so it follows from the ergodicity of $\T$ that
\begin{equation}\label{eq5.11}
\mu\left(\bigcup_{n=1}^\infty\bigcup_{k=0}^{n-1}S_{n.k}\right)=\mu(X)=1.
\end{equation}
The desired identity \eqref{eq5.1} now follows on combining \eqref{eq5.8}--\eqref{eq5.11}.
\end{proof}

\begin{proof}[Proof of Theorem~\ref{thm4}]
Let $\PPP$ be a dissipative system described in the statement of the theorem.
Let $X$ denote the union of the left vertical edges of the constituent atomic squares of the underlying polysquare translation surface.
Furthermore, let
\begin{displaymath}
\T=\T_\alpha:X\to X
\end{displaymath}
denote the discretization of the ordinary geodesic flow on the underlying polysquare translation surface.
For every $x\in X$, let $n(x)=n_{A\cup B}(x)$ denote the smallest integer $n\ge1$ such that $\T^n(x)\in A\cup B$.
Then the time-quantitative version of the Gutkin--Veech theorem gives rise to a threshold $N_0=N_0(\PPP;A\cup B;\alpha)$
such that $n(x)\le N_0$ for $\lambda_1$-almost every $x\in X$.

Consider the induced map
\begin{displaymath}
\T^*=\T\vert_{A\cup B}:A\cup B\to A\cup B:x\mapsto \T^{n(x)}(x).
\end{displaymath}
Then it follows from the existence of $N_0=N_0(\PPP;A\cup B;\alpha)$ and Lemma~\ref{lem51} that
$\T^*$ is a finite interval exchange transformation on the set $A\cup B$.
Indeed, the set $A\cup B$ can essentially be decomposed into a finite union of disjoint intervals
such that ordinary geodesic flow of slope $\alpha$ moves each interval splitting-free either
by length $N_0(1+\alpha^2)^{1/2}$ without hitting $A\cup B$ or
by length at most $N_0(1+\alpha^2)^{1/2}$ when returning to $A\cup B$ for the first time.

Let $\FFF(A\cup B)$ denote the first return strip, where $\lambda_1$-almost every point of $A\cup B$
is swept along by the flow of slope $\alpha$ until it hits the set $A\cup B$ again for the first time,
and let $\FFF(A)$ and $\FFF(B)$ denote respectively the corresponding first return strip when we start
with points of $A$ only and with points of $B$ only.

We now apply Lemma~\ref{lem52} with $S=A\cup B$ and $\mu=\lambda_1/s$, where $s$ denotes the number of distinct atomic squares
of the underlying polysquare translation surface, so that $\mu(X)=1$, and deduce that
\begin{displaymath}
\frac{1}{s}\lambda_2(\FFF(A\cup B))=\frac{1}{s}\int_{A\cup B}n(x)\,\dd\lambda_1=1.
\end{displaymath}
Hence $\lambda_2(\FFF(A\cup B))=s$, and so
\begin{displaymath}
\FFF(A)\cup\FFF(B)=\FFF(A\cup B)=\PPP,
\end{displaymath}
apart from exceptional sets of $2$-dimensional Lebesgue measure~$0$.
The reader may wish to check whether this is the same split as obtained by the reverse flow recipe.

Observe that $\lambda_2(\FFF(B))\ge1$.
To see this, we simply observe that the continuous analogue of Lemma~\ref{lem52} applied to the projection of $\FFF(B)$ to the unit torus $[0,1)^2$
shows that the image is the whole unit torus and so has area~$1$.

At this point, we recall that the Kakutani--Kac approach using Lemmas \ref{lem51} and~\ref{lem52}
gives rise to a measure preserving induced map.
We now make use of this.
For every subset $H\subset\PPP$, let
\begin{displaymath}
y(H)=\{y:(x,y)\in\PPP\mbox{ and }(x,y)\in H\mbox{ for some }x\}
\end{displaymath}
denote the collection of the $y$-coordinates of the points of~$H$.
We have two cases, motivated respectively by Figures 22 and~24.

\begin{case1}
We have \textit{perfect} balance, in the sense that
\begin{equation}\label{eq5.12}
\lambda_1(y(\T^*(A)\cap A)\cap y(\T^*(A)\cap B))=0,
\end{equation}
so that the sets $\T^*(A)\cap A$ and $\T^*(A)\cap B$ have essentially no common $y$-coordinates.
Note that
\begin{align}\label{eq5.13}
\lambda_1((\T^*(A)\cap A)\cup(\T^*(A)\cap B))
&
=\lambda_1(\T^*(A)\cap(A\cup B))
\nonumber
\\
&
=\lambda_1(\T^*(A))
=\lambda_1(A)
=\lambda_1(B),
\end{align}
where the last step is a consequence of the measure preserving property of~$\T^*$.
It follows that if \eqref{eq5.12} holds, then
the projection of $\T^*(A)\cap A$ to the unit torus $[0,1)^2$
and the projection of $\T^*(A)\cap B$ to the unit torus $[0,1)^2$
together form the projection of $A$ or $B$ to the unit torus $[0,1)^2$,
as illustrated in Figure~29, where the parts of $\T^*(A)$ in $A$ and $B$
are indicated in bold under $A$ and $B$ respectively.
See also Figure~22 for a concrete example.

\begin{displaymath}
\begin{array}{c}
\includegraphics[scale=0.8]{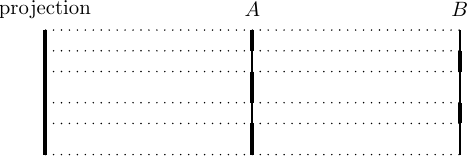}
\\
\mbox{Figure 29: $\T^*(A)\cap A$ and $\T^*(A)\cap B$ are perfectly balanced}
\end{array}
\end{displaymath}

Using the measure preserving property of $\T^*$ again, we conclude that the sets
$\T^*(A)\cap A$ and $\T^*(A)\cap B$ cover $\lambda_1$-almost every $y$-coordinate in $A$ or $B$ precisely once.
This also means that the sets $\T^*(B)\cap A$ and $\T^*(B)\cap B$ cover $\lambda_1$-almost every $y$-coordinate in $A$ or $B$ precisely once.
The dissipative flow of slope $\alpha$ jumps from $\T^*(B)$ to $A$ in a $1$-to-$1$ and measure preserving way,
and jumps from $\T^*(A)$ to $A$ in a $1$-to-$1$ and measure preserving way.

Starting from~$A$, we can extend it by the dissipative flow of slope $\alpha$ forever
in a $1$-to-$1$ and measure preserving way.
Then the resulting set $\frakR(\PPP;A;\alpha)\subset\PPP$ is clearly invariant under dissipative flow of slope~$\alpha$.
It then follows from the Poincare recurrence theorem that $\lambda_2$-almost every point of $\frakR(\PPP;A;\alpha)$
is recurrent under dissipative flow of slope~$\alpha$.
The two sets $\frakR(\PPP;A;\alpha)$ and $\FFF(A)$ have some common properties.
They both contain $A$ and are invariant under dissipative flow of slope~$\alpha$.
They are also both disjoint from $\FFF(B)$.
Thus we conclude that
\begin{equation}\label{eq5.14}
\frakR(\PPP;A;\alpha)=\FFF(A)=\RRR(\PPP;\alpha)
\quad\mbox{and}\quad
\FFF(B)=\WWW(\PPP;\alpha),
\end{equation}
apart from exceptional sets of $2$-dimensional Lebesgue measure~$0$.
If $\RRR(\PPP;\alpha)$ has area~$1$, then we have uniformity of dissipative flow of slope $\alpha$ in the set.
If $\RRR(\PPP;\alpha)$ has area greater than~$1$, then the set may decompose into a union of minimal subsets that
are invariant under dissipative flow of slope $\alpha$ and with integer valued areas.
It then follows from the Birkhoff ergodic theorem that there is uniformity of dissipative flow of slope $\alpha$ in each of these sets.
\end{case1}

\begin{case2}
We do not have perfect balance, in the sense that
\begin{equation}\label{eq5.15}
\lambda_1(y(\T^*(A)\cap A)\cap y(\T^*(A)\cap B))>0,
\end{equation}
so that the sets $\T^*(A)\cap A$ and $\T^*(A)\cap B$ have non-trivial overlapping of $y$-coordinates.
Using the observation \eqref{eq5.13}, we conclude that if \eqref{eq5.15} holds, then
the projection of $\T^*(A)\cap A$ to the unit torus $[0,1)^2$
and the projection of $\T^*(A)\cap B$ to the unit torus $[0,1)^2$
together leave out part of the projection of $A$ or $B$ to the unit torus $[0,1)^2$,
as illustrated in Figure~30, where the parts of $\T^*(A)$ in $A$ and $B$
are indicated in bold under $A$ and $B$ respectively.
See also Figure~24 for a concrete example.

\begin{displaymath}
\begin{array}{c}
\includegraphics[scale=0.8]{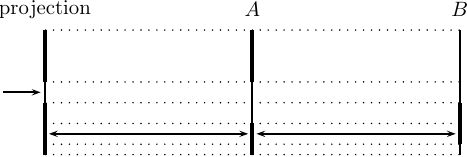}
\\
\mbox{Figure 30: $\T^*(A)\cap A$ and $\T^*(A)\cap B$ are perfectly balanced}
\end{array}
\end{displaymath}

In particular, the dissipative flow of slope $\alpha$ does not jump from $\T^*(A)$ to $A$ in a $1$-to-$1$ and measure preserving way.
It follows that removing $\FFF(B)$ is insufficient to generate dissipative flow of slope $\alpha$ that is measure preserving in the remaining set.
The extension process that we discuss next is now guided by Figure~24.
This is a process that \textit{grows} the transient set in steps.

For the first step, let $A_1\subset A$ be defined by
\begin{displaymath}
y(A_1)=y(\T^*(A)\cap(A\cup B)).
\end{displaymath}
This means that $A_1$ already captures essentially all the $y$-coordinates of $\T^*(A)$, and we throw away the complement $A\setminus A_1$.
More precisely, we expand the set $\FFF(B)$ by including the extra set $\FFF(A\setminus A_1)$.
For the second step, let $A_2\subset A_1$ be defined by
\begin{displaymath}
y(A_2)=y(\T^*(A_1)\cap(A\cup B)).
\end{displaymath}
This means that $A_2$ already captures essentially all the $y$-coordinates of $\T^*(A_1)$, and we throw away the complement $A_1\setminus A_2$.
More precisely, we expand the set $\FFF(B)$ further by including the extra set $\FFF(A_1\setminus A_2)$.
And so on.
We therefore have a decreasing sequence
\textcolor{white}{xxxxxxxxxxxxxxxxxxxxxxxxxxxxxx}
\begin{displaymath}
A\supset A_1\supset A_2\supset\ldots\supset A_i\supset\ldots
\end{displaymath}
of subsets of~$A$.
At the $i$-th step, we expand the set $\FFF(B)$ further by including the extra set $\FFF(A_{i-1}\setminus A_i)$.

Of course, expanding the set $\FFF(B)$ means contracting the set $\FFF(A)$ at the same time.
Accordingly, let
\textcolor{white}{xxxxxxxxxxxxxxxxxxxxxxxxxxxxxx}
\begin{displaymath}
A_\infty=\bigcap_{i=1}^\infty A_i.
\end{displaymath}
Starting from~$A_\infty$, we can extend it by the dissipative flow of slope $\alpha$ forever
in a $1$-to-$1$ and measure preserving way.
Then the resulting set $\frakR(\PPP;A_\infty;\alpha)\subset\PPP$ is clearly invariant under dissipative flow of slope~$\alpha$.
It then follows from the Poincare recurrence theorem that $\lambda_2$-almost every point of $\frakR(\PPP;A_\infty;\alpha)$
is recurrent under dissipative flow of slope~$\alpha$.
The two sets $\frakR(\PPP;A_\infty;\alpha)$ and $\FFF(A_\infty)$ have some common properties.
They both contain $A_\infty$ and are invariant under dissipative flow of slope~$\alpha$.
They are also both disjoint from
\begin{displaymath}
\FFF(B)\cup\bigcup_{i=1}^\infty\FFF(A_{i-1}\setminus A_i),
\end{displaymath}
with the convention that $A_0=A$.
Thus we claim that
\begin{equation}\label{eq5.16}
\frakR(\PPP;A_\infty;\alpha)=\FFF(A_\infty)=\RRR(\PPP;\alpha)
\end{equation}
and
\textcolor{white}{xxxxxxxxxxxxxxxxxxxxxxxxxxxxxx}
\begin{displaymath}
\FFF(B)\cup\bigcup_{i=1}^\infty\FFF(A_{i-1}\setminus A_i)=\WWW(\PPP;\alpha),
\end{displaymath}
apart from exceptional sets of $2$-dimensional Lebesgue measure~$0$.

We need to elaborate on \eqref{eq5.16}.
In Case~1, the sets in \eqref{eq5.14} are finite unions of polygons with boundary edges that are vertical or of slope~$\alpha$.
In this case, we can clearly draw the same conclusion if the extension process is finite.
However, the extension process may possibly be infinite.
In this case, the set $\frakR(\PPP;A_\infty;\alpha)$ may be closed which is much more complicated than a union of polygons.
We then need to show that almost every point in $\frakR(\PPP;A_\infty;\alpha)$ is a non-wandering point.

Note that the set $\frakR(\PPP;A_\infty;\alpha)$ is measurable and invariant under dissipative and measure preserving flow of slope~$\alpha$,
so almost every point $P\in\frakR(\PPP;A_\infty;\alpha)$ satisfies the Lebesgue density theorem,
so that for every $\eps>0$, $P$ has an $\eps$-neighbourhood which intersects $\frakR(\PPP;A_\infty;\alpha)$
in a set of positive measure.
It then follows from the Poincare recurrence theorem that this $\eps$-neighbourhood of $P$ returns via the flow infinitely many times.
Hence $P$ is a non-wandering point.
\end{case2}

This completes the proof.
\end{proof}

Theorem~\ref{thm4} leads immediately to the following question on the finiteness of the extension process.

\begin{question}
What conditions will ensure that the extension process in Theorem~\ref{thm4} terminates after a finite number of steps,
so that the recurrent set $\RRR(\PPP;\alpha)$ is a finite union of polygons with boundary edges that are vertical or of slope~$\alpha$?
Is it true that the extension process always terminates after a finite number of steps?
\end{question}

In the case when the underlying polysquare translation surface is the L-surface, we have already verified this in the affirmative
in the special case of Figure~18 where the conditions \eqref{eq4.1} are satisfied
and in the special case of Figure~23 with $b=0.8$ and $\alpha=1/\sqrt{2}$.

For any typical dissipative system when the underlying polysquare translation surface is the L-surface, we can also establish this
in the affirmative by following the proof of Theorem~\ref{thm4}.
Here, in Case~1, the conclusion is clear.
In Case~2, there must be a non-empty subinterval of $B$ and corresponding subinterval of $A$ that are gray immediately to the left,
leading to a dark gray strip that originates from the subinterval in $A$ analogous to that in Figure~24.
Clearly this dark gray strip has a twin that originates from the subinterval in $B$, and they are both in the transient set $\WWW(\PPP;\alpha)$.
This means that the third twin that originates from the left vertical edge of the atomic square that does not contain $A$ or $B$
must be an open set in the recurrent set $\RRR(\PPP;\alpha)$.
The recurrent set $\RRR(\PPP;\alpha)$ has area~$1$, and its projection to the unit torus $[0,1)^2$ is the whole unit torus.
The desired result now follows from the Remark after the proof of Theorem~\ref{thm1}
concerning a finite number of zig-zaggings.

%
%

\section{More on the extension process}\label{sec6}

We continue our investigation of dissipative systems $\PPP$ where the underlying surface is a finite polysquare translation surface,
as introduced in Section~\ref{sec4}.
We have shown in Section~\ref{sec5} that the extension process, starting from the no-go strip $\NNN(\PPP;\alpha)$,
constructs the transient set $\WWW(\PPP;\alpha)$ in a finite or infinite number of steps.
When the number of steps is finite, then we know that both the transient set $\WWW(\PPP;\alpha)$ and the recurrent set $\RRR(\PPP;\alpha)$
are finite unions of polygons with boundary edges that are vertical or of slope~$\alpha$.
In this section, we establish some partial results concerning the finiteness of the extension process.
A first result is the following.

\begin{theorem}\label{thm5}
In the setting and terminology of Theorem~\ref{thm4}, suppose further that the endpoints of the vertical intervals
that make up the one-sided barrier $B$ all have rational $y$-coordinates.
Then the extension process terminates after a finite number of steps,
and both the transient set $\WWW(\PPP;\alpha)$ and the recurrent set $\RRR(\PPP;\alpha)$
are finite unions of polygons with boundary edges that are vertical or of slope~$\alpha$.
\end{theorem}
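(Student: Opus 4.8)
The plan is to reduce the problem to the iteration already set up in the proof of Theorem~\ref{thm4}, and then to run the extension process against a strictly decreasing nonnegative quantity whose possible decrements are bounded away from zero, so that only finitely many steps can occur.

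First I would use the rationality hypothesis exactly once, to \emph{align the barrier with a grid}. Let $q$ be a common denominator, so that every endpoint of every interval making up $B$ lies in $\frac1q\Zz$; since $A$ carries the same heights as $B$, the same holds for $A$. Subdividing each atomic square of the underlying polysquare surface into $q^2$ congruent subsquares of side $1/q$ produces a new polysquare translation surface carrying the same dissipative flow of the same irrational slope~$\alpha$, on which $A$ and $B$ are now unions of \emph{complete} vertical edges of atomic squares, and on which all heights that will ever matter lie in $\frac1q\Zz+\Zz\alpha\pmod1$. Next, exactly as in the proof of Theorem~\ref{thm4}, let $\T=\T_\alpha$ be the discretized integrable geodesic flow and let $\Phi=\T\vert_{A\cup B}$ be the first-return map of the integrable flow to $A\cup B$. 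The time-quantitative Gutkin--Veech theorem provides a finite return threshold $N_0$, so by Lemma~\ref{lem51} the map $\Phi$ is a \emph{finite} interval exchange transformation, acting on heights as $y\mapsto y+n\alpha$ with $1\le n\le N_0$ on each of its finitely many intervals, and with breakpoints whose heights lie in $\frac1q\Zz+\{-n\alpha:0\le n\le N_0\}\pmod1$.

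Writing $J=y(A)=y(B)$, the dissipative return map to $A$ is obtained by following $\Phi$ and then identifying a landing point on $B$ with the point of $A$ at the same height. Let $\phi\colon J\to J$ be the induced self-map of heights: it is a piecewise translation by the multiples $n\alpha$, and it is injective off the overlap set where $\Phi(A)$ meets both $A$ and $B$ at a common height. The extension process of Theorem~\ref{thm4} is then precisely the computation of the decreasing sequence $J_i=\phi^i(J)$ and of $J_\infty=\bigcap_i J_i$, and the process terminates in finitely many steps if and only if $J_{i+1}=J_i$ for some finite~$i$. Because $\phi$ acts locally as a translation, it preserves length with multiplicity, and hence the length lost at the $i$-th step equals the doubly covered measure, $\lambda_1(J_i)-\lambda_1(J_{i+1})=\lambda_1(J_i\setminus J_{i+1})$. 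Since the endpoints of $J_i$ and of $\phi(J_i)$ all lie in $\frac1q\Zz+\Zz\alpha$, the length of every component of $J_i\setminus J_{i+1}$ lies in $\frac1q\Zz+\Zz\alpha$ as well.

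The heart of the argument, and the step I expect to be the main obstacle, is to show that the coefficient of $\alpha$ in these lost lengths is bounded \emph{uniformly in $i$}. The danger is an infinite sequence of genuinely nonempty steps whose orphaned components shrink to zero; ruling this out is exactly what rationality must buy us. I would argue that each newly orphaned component is bounded on one side by a height of $J_i$ and on the other side by the $\phi$-image of a nearby height, and that a single application of $\phi$ displaces a height by a multiple of $\alpha$ of absolute value at most $N_0$; consequently every such length has the form $c/q+d\alpha$ with $|d|\le CN_0$ for a fixed constant $C$ depending only on the surface. For fixed irrational $\alpha$ and $d$ confined to $|d|\le CN_0$, the positive numbers of the form $c/q+d\alpha$ are bounded below by the fixed quantity $\delta=\min\{1/q,\ \min_{1\le|d|\le CN_0}\operatorname{dist}(d\alpha,\tfrac1q\Zz)\}>0$. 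Hence each nonempty step removes at least $\delta$ of height; since $\sum_i\lambda_1(J_i\setminus J_{i+1})\le\lambda_1(J)<\infty$, only finitely many steps can be nonempty, and the process terminates.

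Once finiteness of the extension process is established, Theorem~\ref{thm4} applies verbatim and yields that both $\RRR(\PPP;\alpha)$ and $\WWW(\PPP;\alpha)$ are finite unions of polygons with boundary edges vertical or of slope~$\alpha$, completing the proof. If the uniform bound on the $\alpha$-coefficient proves awkward to extract directly, an alternative route to the same conclusion is to treat $\phi$ as a finite interval exchange on the grid-aligned heights and to analyse its maximal invariant subset by the inductive method of \cite[Lemma~4.1]{BCY23}, which shows that such an invariant set is a finite union of intervals attained after finitely many induced-map constructions.
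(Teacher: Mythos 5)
Your reduction to the first-return map and your grid alignment are sound (the paper's proof likewise magnifies $\PPP$ by the lowest common multiple $Q$ of the denominators so that $B$ becomes a union of whole vertical edges), but the step you yourself flag as the heart of the argument contains a genuine gap: the claim that every orphaned length has the form $c/q+d\alpha$ with $\vert d\vert\le CN_0$ \emph{uniformly in $i$} is unjustified, and false in general. A single application of $\phi$ displaces a height by at most $N_0$ multiples of $\alpha$, but the endpoints of $J_i$ are produced by $i$-fold composition of such piecewise translations, so after $i$ steps they have the form $c/q+m\alpha$ with $\vert m\vert$ as large as roughly $iN_0$. A component of $J_i\setminus J_{i+1}$ typically has one endpoint created at step $i$ and the other inherited from a much earlier step, so the $\alpha$-coefficient of its length is controlled only by the \emph{accumulated} displacement, which grows without bound (in the paper's notation, the accumulated horizontal flow length $L(i)$ is increasing and unbounded when the process does not terminate). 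For $\vert d\vert<q_k$ the best available lower bound is $\Vert d\alpha\Vert>1/(2q_k)$ (Lemma~\ref{lem62}), which tends to $0$ as $k$ grows; hence there is no uniform $\delta>0$, the series $\sum_i\lambda_1(J_i\setminus J_{i+1})$ can converge with infinitely many nonzero terms, and your termination argument collapses. That the gaps really can shrink to zero is the whole difficulty: with irrational barrier data the Boshernitzan--Kornfeld example in Section~\ref{sec7} produces a Cantor-type attractor by exactly this shrinking mechanism, so rationality must be used quantitatively, not merely to place endpoints on a grid.

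The paper escapes by trading length for \emph{area}. After noting that each singularity causing a splitting is eliminated once and for all, so that all but finitely many steps are splitting-free, Lemma~\ref{lem61} shows by a colouring case analysis (the ww-, gg-, wg-, gw-subintervals) that the count $S(i)$ of ww- and gg-subintervals never increases, hence $S(i)\le c_1$. Assuming nontermination, one selects steps $i_\nu$ with $L(i_\nu)\in(q_{k_\nu}/2,q_{k_\nu})$ for a sparse sequence of convergent denominators satisfying \eqref{eq6.1}; pigeonholing against $S(i)\le c_1$ yields a single gg-subinterval $A_\nu^\dagger$ with flow length $L(A_\nu^\dagger)\ge q_{k_\nu}/2c_1$, and the magnified version of Lemma~\ref{lem62} bounds its height below by $1/(2q_{k_\nu}Q)$. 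The \emph{product} of a height that shrinks like $1/q_{k_\nu}$ with a horizontal extent that grows like $q_{k_\nu}$ bounds each area increment $\lambda_2(\AAAA(i_\nu)\setminus\AAAA(i_{\nu-1}))$ below by $1/(4c_1Q)$; since these increments are disjoint and $\PPP$ has finite area, the process terminates. Your fallback route does not repair the gap either: the height map $\phi$ is not injective, so it is an interval \emph{translation} mapping rather than an interval exchange, and Lemma~4.1 of \cite{BCY23} (ergodicity of a genuine interval exchange from connectivity of its overlapping graph) gives no finiteness statement for the invariant-set construction of such maps --- again the Section~\ref{sec7} example shows the conclusion can genuinely fail in that generality.
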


\begin{proof}
We follow the proof of Theorem~\ref{thm4} and use the Kakutani--Kac approach.
In Case~1, the extension process does not start at all, so is clearly finite.
In Case~2, we have the situation illustrated in Figure~31.

\begin{displaymath}
\begin{array}{c}
\includegraphics[scale=0.8]{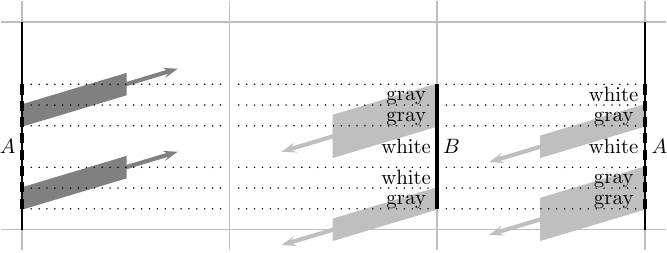}
\\
\mbox{Figure 31: a step of the extension process}
\end{array}
\end{displaymath}

At the start of the $i$-th step of the extension process,  we have the growing no-go zone at the end of the $(i-1)$-th step, coloured in gray,
immediately to the left of $A_{i-1}\setminus A_i$ as well as immediately to the left of the corresponding subintervals of~$B$.
We then construct the set $\FFF(A_{i-1}\setminus A_i)$, which may contain one or more strips depending on the number of intervals $A^\dagger$
that make up the set $A_{i-1}\setminus A_i$.
This is done by starting from each interval $A^\dagger$
and sweeping it along by the flow of slope~$\alpha$ until it hits the set $A\cup B$, as illustrated in dark gray in Figure~31.

This flow from each interval $A^\dagger$ that makes up the set $A_{i-1}\setminus A_i$ may split at a singularity of the system~$\PPP$,
as illustrated in Figure~24.
However, any singularity of $\PPP$ which causes splitting in the $i$-th step is then \textit{eliminated},
in the sense that it will not feature in any subsequent step of the extension process.
Thus all the singularities will be eliminated after finitely many steps of the extension process,
assuming that the extension process has not actually terminated before that.
Hence by ignoring a finite number of steps at the start of the extension process, we may therefore assume without loss of generality that
the flow corresponding to $\FFF(A^\dagger)$ from each interval $A^\dagger$ of $A_{i-1}\setminus A_i$ is splitting-free,
and its image $\T^*(A^\dagger)$ on $A\cup B$ is an interval.
We then need to investigate the location of $\T^*(A^\dagger)$.
The part of $\PPP$ immediately to the left of $\T^*(A^\dagger)$ at the beginning of the $i$-th step is not yet part of the growing
no-go zone and is therefore coloured white.
We study its intersection with $A\cup B$.
For the purpose of our discussion, we assume that $\T^*(A^\dagger)$ lies on~$A$,
as the case if it lies on $B$ is essentially the same, with reference to $A$ and to $B$ interchanged.

For each subset $A_i\subset A$, we also consider the corresponding subset $B_i\subset B$.
We say that an interval $A^\dagger\subset A_i$ is a ww-subinterval (resp. gg-subinterval) of $A_i$ if it is maximal in the sense of inclusion with the property
that $\PPP$ is coloured white (resp. gray) immediately to the left of~$A^\dagger$
and is also coloured white (resp. gray) immediately to the left of the corresponding subinterval $B^\dagger\subset B_i$.
We say that an interval $A^\dagger\subset A_i$ is a wg-subinterval (resp. gw-subinterval) of $A_i$ if it is maximal in the sense of inclusion with the property
that $\PPP$ is coloured white (resp. gray) immediately to the left of~$A^\dagger$
and is coloured gray (resp. white) immediately to the left of the corresponding subinterval $B^\dagger\subset B_i$.
Furthermore, we define the special sum at the start of the $i$-th step of the extension process by
\begin{displaymath}
S(i)=\vert\{A^\dagger\subset A_{i-1}:\mbox{$A^\dagger$ is a ww-subinterval or a gg-subinterval of $A_{i-1}$}\}\vert,
\end{displaymath}
and this represents the total number of ww-subintervals and gg-subintervals of $A_{i-1}$ at the start of the $i$-th step of the extension process.

We need the following intermediate result.

\begin{lemma}\label{lem61}
Suppose that there is no splitting in the $i$-th step of the extension process where the flow takes the set $A_{i-1}\setminus A_i$
to the image $\T^*(A_{i-1}\setminus A_i)$.
Then the inequality $S(i+1)\le S(i)$ holds.
In particular, there exists a positive constant $c_1=c_1(\PPP;B;\alpha)\ge1$ such that $S(i)\le c_1$ for every $i=1,2,3,\ldots.$
\end{lemma}

\begin{proof}
The proof breaks down into a number of cases.

Note first of all that the set $A_{i-1}\setminus A_i$ is the union of all the gg-subintervals $A^\dagger$ of~$A_{i-1}$,
and recall that at the start of the $i$-th step of the extension process, the part of $\PPP$ immediately to
the left of each image $\T^*(A^\dagger)$ is coloured white.
It follows that each image $\T^*(A^\dagger)$ is contained in a union of finitely many ww-subintervals and wg-subintervals of~$A_{i-1}$.

\begin{case1}
Suppose that the image $\T^*(A^\dagger)$ of a gg-subinterval $A^\dagger$ of $A_{i-1}$
lies in the interior of a ww-subinterval $A^\ddagger$ of~$A_{i-1}$,
as illustrated in the picture on the left in Figure~32.
Then after $\FFF(A^\dagger)$ has been added to the growing no-go zone,
this ww-subinterval $A^\ddagger$ of $A_{i-1}$ is converted to $2$ ww-subintervals and a gw-subinterval of~$A_i$.
It follows that going from $S(i)$ to $S(i+1)$, we gain $2$ new ww-subintervals,
lose the ww-subinterval $A^\ddagger$ and lose the gg-subinterval~$A^\dagger$.
Thus $S(i+1)=S(i)$.
\end{case1}

\begin{displaymath}
\begin{array}{c}
\includegraphics[scale=0.8]{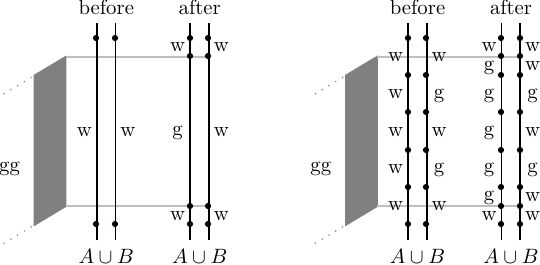}
\\
\mbox{Figure 32: the splitting-free image $\T^*(A^\dagger)$ and consequences}
\end{array}
\end{displaymath}

\begin{case2}
Suppose that the image $\T^*(A^\dagger)$ of a gg-subinterval $A^\dagger$ of $A_{i-1}$
lies in the interior of a union of consecutive ww-subintervals and wg-intervals of~$A_{i-1}$,
with a ww-subinterval of $A_{i-1}$ at each end of the union, as illustrated in the picture on the right in Figure~32.
Then after $\FFF(A^\dagger)$ has been added to the growing no-go zone,
the ww-subinterval of $A_{i-1}$ at the each end is converted to a ww-subinterval and a gw-subinterval of~$A_i$,
each ww-subinterval of $A_{i-1}$ in the middle is converted to a gw-subinterval of~$A_i$,
and each wg-subinterval of $A_{i-1}$ is converted to a gg-subinterval of~$A_i$.
It follows that going from $S(i)$ to $S(i+1)$, we gain one more gg-subinterval of $A_i$ than ww-subinterval of $A_{i-1}$ that we lose,
but we also lose the gg-subinterval~$A^\dagger$.
Thus $S(i+1)=S(i)$.
\end{case2}

\begin{case3}
Suppose that the image $\T^*(A^\dagger)$ of a gg-subinterval $A^\dagger$ of $A_{i-1}$
lies in the interior of a union of consecutive wg-subintervals and ww-intervals of~$A_{i-1}$,
with a wg-subinterval of $A_{i-1}$ at each end of the union, as illustrated in the picture on the left in Figure~33.
Then after $\FFF(A^\dagger)$ has been added to the growing no-go zone,
the wg-subinterval of $A_{i-1}$ at the each end is converted to a gg-subinterval and a wg-subinterval of~$A_i$,
each wg-subinterval of $A_{i-1}$ in the middle is converted to a gg-subinterval of~$A_i$,
and each ww-subinterval of $A_{i-1}$ is converted to a gw-subinterval of~$A_i$.
It follows that going from $S(i)$ to $S(i+1)$, we gain one more gg-subinterval of $A_i$ than ww-subinterval of $A_{i-1}$ that we lose,
but we also lose the gg-subinterval~$A^\dagger$.
Thus $S(i+1)=S(i)$.
\end{case3}

\begin{displaymath}
\begin{array}{c}
\includegraphics[scale=0.8]{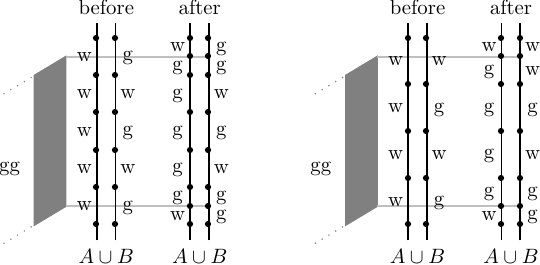}
\\
\mbox{Figure 33: the splitting-free image $\T^*(A^\dagger)$ and consequences}
\end{array}
\end{displaymath}

\begin{case4}
Suppose that the image $\T^*(A^\dagger)$ of a gg-subinterval $A^\dagger$ of $A_{i-1}$
lies in the interior of a union of consecutive wg-subintervals and ww-intervals of~$A_{i-1}$,
with a wg-subinterval of $A_{i-1}$ at one end of the union and a ww-subinterval of $A_{i-1}$ at the other end of the union,
as illustrated in the picture on the right in Figure~33.
Then after $\FFF(A^\dagger)$ has been added to the growing no-go zone,
the wg-subinterval of $A_{i-1}$ at one end is converted to a gg-subinterval and a wg-subinterval of~$A_i$,
the ww-subinterval of $A_{i-1}$ at the other end is converted to a ww-subinterval and a gw-subinterval of~$A_i$,
each wg-subinterval of $A_{i-1}$ in the middle is converted to a gg-subinterval of~$A_i$,
and each ww-subinterval of $A_{i-1}$ in the middle is converted to a gw-subinterval of~$A_i$.
It follows that going from $S(i)$ to $S(i+1)$, we gain one more gg-subinterval of $A_i$ than ww-subinterval of $A_{i-1}$ that we lose,
but we also lose the gg-subinterval~$A^\dagger$.
Thus $S(i+1)=S(i)$.
\end{case4}

We next investigate those instances when an endpoint of the image $\T^*(A^\dagger)$
coincides with an endpoint of the union of consecutive wg-subintervals and ww-intervals of~$A_{i-1}$.

\begin{case5}
Suppose that the extreme endpoint of the wg-subinterval of $A_{i-1}$ at one end of the union of consecutive
wg-subintervals and ww-intervals of $A_{i-1}$ coincides with an endpoint of the interval $\T^*(A^\dagger)$,
as illustrated in the picture on the right in Figure~34.
Compared to the picture on the left, we see that the only difference is that there is now one fewer wg-subinterval of~$A_i$,
and this does not contribute to $S(i+1)$.
Thus $S(i+1)=S(i)$.
\end{case5}

\begin{displaymath}
\begin{array}{c}
\includegraphics[scale=0.8]{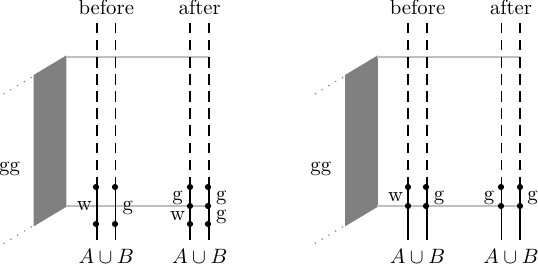}
\\
\mbox{Figure 34: the splitting-free image $\T^*(A^\dagger)$ and consequences}
\end{array}
\end{displaymath}

\begin{case6}
Suppose that the extreme endpoint of the ww-subinterval of $A_{i-1}$ at one end of the union of consecutive
wg-subintervals and ww-intervals of $A_{i-1}$ coincides with an endpoint of the interval $\T^*(A^\dagger)$,
as illustrated in the picture on the right in Figure~35.
Compared to the picture on the left, we see that the only difference is that there is now one fewer ww-subinterval of~$A_i$,
and this makes a difference to $S(i+1)$.
Thus $S(i+1)<S(i)$.
\end{case6}

\begin{displaymath}
\begin{array}{c}
\includegraphics[scale=0.8]{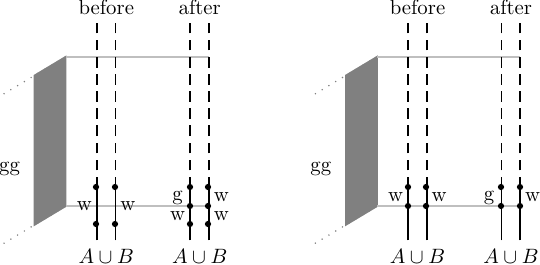}
\\
\mbox{Figure 35: the splitting-free image $\T^*(A^\dagger)$ and consequences}
\end{array}
\end{displaymath}

This completes the proof.
\end{proof}

Assume, on the contrary, that the extension process does not terminate after a finite number of steps.
That means that for every $i=1,2,3,\ldots,$ at the beginning of the $i$-th step, there are gg-subintervals of $A_{i-1}$ to work with.
On the other hand, we see from the proof of Lemma~\ref{lem61} that each gg-subinterval $A^\dagger$ of $A_{i-1}$ originates from
a gg-subinterval of $A_{i-2}$, which in turn originates from a gg-subinterval of $A_{i-3}$, and so on,
and so has its origin from a gg-subinterval of~$A$.
Thus we can examine their combined horizontal flow.
More precisely, for each gg-subinterval $A^\dagger$ of $A_{i-1}$, we consider the total length $L(A^\dagger)$ of the horizontal flow,
starting from a gg-subinterval of $A$ and culminating in the image~$A^\dagger$, as illustrated in Figure~36 where
the flow is stretched out on the plane.

\begin{displaymath}
\begin{array}{c}
\includegraphics[scale=0.8]{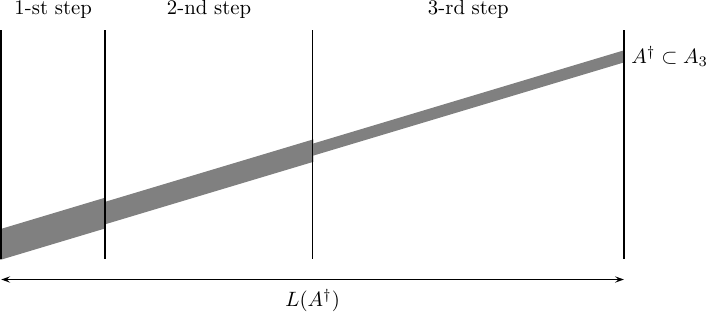}
\\
\mbox{Figure 36: illustration of $L(A^\dagger)$ on the plane}
\end{array}
\end{displaymath}

Summing $L(A^\dagger)$ over the collection $\HHH(i)$ of all the individual gg-subintervals $A^\dagger$ of~$A_{i-1}$ gives rise to the sum
\textcolor{white}{xxxxxxxxxxxxxxxxxxxxxxxxxxxxxx}
\begin{displaymath}
L(i)=\sum_{A^\dagger\in\HHH(i)}L(A^\dagger).
\end{displaymath}
Then the sequence $L(i)$, $i=1,2,3,\ldots,$ is increasing and not bounded above.

We need a well known number theoretic result.

\begin{lemma}\label{lem62}
Let $q_k=q_k(\alpha)$ denote the denominator of the $k$-convergent of the irrational number $\alpha>0$.
Then for every integer $n=1,2,3,\ldots,q_k-1$, we have
\begin{displaymath}
\Vert n\alpha\Vert>\frac{1}{2q_k},
\end{displaymath}
where $\Vert\gamma\Vert$ denotes the distance of $\gamma$ from the nearest integer.
\end{lemma}

Using the time-quantitative version of the Gutkin--Veech theorem and Lemma~\ref{lem61},
we see that there exists a sufficiently large positive constant $c_2=c_2(\PPP;B;\alpha)$
such that any interval of positive real numbers of length at least $c_2$ contains at least one member of the sequence $L(i)$, $i=1,2,3,\ldots.$

Let $q_{k_1},q_{k_2},q_{k_3},\ldots,$ where $k_1<k_2<k_3<\ldots,$ be a sequence of sufficiently large denominators
of the convergents of $\alpha$ such that
\begin{equation}\label{eq6.1}
q_{k_\nu}>2c_2,
\quad
q_{k_{\nu+1}}\ge4c_1q_{k_\nu},
\quad
\nu=1,2,3,\ldots.
\end{equation}
Then there exists an increasing sequence of integers $i_\nu$, $\nu=1,2,3,\ldots,$ such that
\begin{displaymath}
L(i_\nu)\in\left(\frac{q_{k_\nu}}{2},q_{k_\nu}\right),
\quad
\nu=1,2,3,\ldots,
\end{displaymath}
and the intervals are pairwise disjoint, since $c_1\ge1$.
We may further assume that there is no splitting in the extension process from step $i_1$ onwards.

For every $i=1,2,3,\ldots,$ let $\AAAA(i)$ denote the subset of the transient set $\WWW(\PPP;\alpha)$
that the extension process gives after $i$ steps.

We wish to give a lower bound to the the quantities
\begin{displaymath}
\lambda_2(\AAAA(i_\nu)\setminus\AAAA(i_{\nu-1})),
\quad
\nu=2,3,4,\ldots.
\end{displaymath}
In view of Lemma~\ref{lem61}, there exists $A_\nu^\dagger\in\HHH(i_\nu)$ such that
\begin{equation}\label{eq6.2}
L(A_\nu^\dagger)\ge\frac{q_{k_\nu}}{2c_1}.
\end{equation}
To determine the length of the interval~$A_\nu^\dagger$, we use the assumption that all of
the vertical intervals that make up the one-sided barrier $B$ have endpoints with rational $y$-coordinates.
Suppose that $Q=Q(\PPP;B;\alpha)$ denotes the lowest common multiple of the denominators of these $y$-coordinates.
On magnifying the dissipative system $\PPP$ by a factor $Q$ in each direction, we arrive at a dissipative system $\PPP_Q$
where the one-sided barrier corresponding to $B$ consists of whole vertical edges of atomic squares of~$\PPP_Q$.
Thus the magnified analogue of the interval $A_\nu^\dagger$ in $\PPP_Q$ must have endpoints with $y$-coordinates
of the form $\{b+m\alpha\}$ for some $m=0,1,2,3,\ldots,q_{k_\nu}-1$,
where $b\in\RRR$ is fixed, and length
\begin{displaymath}
\vert\{b+m_1\alpha\}-\{b+m_2\alpha\}\vert\ge\Vert(m_1-m_2)\alpha\Vert>\frac{1}{2q_{k_\nu}},
\end{displaymath}
where $0\le m_1<m_2<q_{k_\nu}$, in view of Lemma~\ref{lem62}.
Thus the length of the interval $A_\nu^\dagger$ is at least
\textcolor{white}{xxxxxxxxxxxxxxxxxxxxxxxxxxxxxx}
\begin{displaymath}
\frac{1}{2q_{k_\nu}Q}.
\end{displaymath}
It follows from \eqref{eq6.1} and \eqref{eq6.2} that
\begin{displaymath}
\lambda_2(\AAAA(i_\nu)\setminus\AAAA(i_{\nu-1}))
\ge\frac{L(A_\nu^\dagger)-L(i_{\nu-1})}{2q_{k_\nu}Q}
\ge\frac{1}{2q_{k_\nu}Q}\left(\frac{q_{k_\nu}}{2c_1}-q_{k_{\nu-1}}\right)
\ge\frac{1}{4c_1Q}.
\end{displaymath}

The disjointness of the sets $\AAAA(i_\nu)\setminus\AAAA(i_{\nu-1})$, $\nu=2,3,4,\ldots,$
now implies infinite total area, and this gives the desired contradiction.
\end{proof}

Under the hypotheses of Theorem~\ref{thm5}, the restriction of the dissipative flow of irrational slope $\alpha$ to the
recurrent set $\RRR(\PPP;\alpha)$ modulo one is integrable geodesic flow on the unit torus $[0,1)^2$.
The ergodic theorem then implies that this restricted flow is a $k$-fold covering of geodesic flow on the unit torus $[0,1)^2$
for some integer $k$ satisfying $1<k<s$, where $s$ is the number of atomic squares of
the underlying finite polysquare translation surface of the dissipative system~$\PPP$.
Since the recurrent set $\RRR(\PPP;\alpha)$ is a finite union of polygons with boundary edges that are vertical or of slope~$\alpha$,
the usual discretization of the flow restricted to $\RRR(\PPP;\alpha)$ defines a bijective interval exchange transformation
from the interval $[0,k)$ to itself.
If the overlapping graph of this bijection is connected, then Lemma~\ref{lem41} guarantees equidistribution
of the orbits inside $\RRR(\PPP;\alpha)$.
In this case, we say that the attractor $\RRR(\PPP;\alpha)$ is \textit{minimal}.

We also establish a second partial result concerning the finiteness of the extension process.

\begin{theorem}\label{thm6}
In the setting and terminology of Theorem~\ref{thm4}, suppose further that a component of the recurrent set $\RRR(\PPP;\alpha)$
contains an open subset~$G$.
Then this component of the recurrent set $\RRR(\PPP;\alpha)$ can be constructed by a finite number of zig-zaggings
and is a finite union of polygons with boundary edges that are vertical or of slope~$\alpha$.
\end{theorem}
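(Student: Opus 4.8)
The plan is to imitate the argument given for the modified L-surface around Figure~25: once a non-trivial open subset of the recurrent set is in hand, the entire component is recovered by sweeping this open set forward along the dissipative flow, and the irrational rotation lemma ensures that only finitely many zig-zaggings are required. First I would let $\RRR_0$ denote the component of $\RRR(\PPP;\alpha)$ containing $G$. Being a component of the closed invariant recurrent set, $\RRR_0$ is itself invariant under the dissipative flow of slope~$\alpha$, so the forward sweep
\[
\frakR(\PPP;G;\alpha)=\bigcup_{t\ge0}\T_\alpha^{(t)}(G)
\]
remains inside $\RRR_0$; since $G$ is open, this sweep is an open subset of $\RRR_0$.

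Next I would project modulo one to the unit torus $[0,1)^2$, where the dissipative flow on $\RRR_0$ becomes the integrable geodesic flow of slope~$\alpha$, which is minimal and uniquely ergodic because $\alpha$ is irrational. As $G$ is open, its projection has positive area and meets some vertical transversal in the union $X$ of left vertical edges in an open interval of length $\delta>0$. Tracing the sweep on this transversal, the first-return map acts modulo one as the irrational $\alpha$-rotation, and the irrational rotation lemma, applied with $\eps=\delta/2$, furnishes a finite threshold $N$ such that the first $N$ returns already cover the whole circle $[0,1)$. Together with the Gutkin--Veech bound $N_0$ on first-return times used in the proof of Theorem~\ref{thm4}, this shows that the projected sweep covers the entire torus after finitely many traverses between opposite vertical edges of atomic squares; compare the Remark after the proof of Theorem~\ref{thm1}.

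It then remains to lift this back to $\RRR_0$. The dissipative flow on $\RRR_0$ is invertible and measure preserving by the analysis in the proof of Theorem~\ref{thm4}, and projects onto the uniquely ergodic torus flow, so $\RRR_0$ is a finite-degree cover of $[0,1)^2$ with $\lambda_2(\RRR_0)$ a positive integer not exceeding the number $s$ of atomic squares. Interpreting the component $\RRR_0$ as a minimal invariant set, every forward orbit on it equidistributes, so the open forward-invariant set $\frakR(\PPP;G;\alpha)$ of positive measure is forced to have full measure in $\RRR_0$; combined with the covering of the base torus, finitely many further zig-zaggings exhaust all sheets of $\RRR_0$. Each zig-zagging contributes a strip bounded by vertical segments and segments of slope~$\alpha$, so after finitely many steps $\frakR(\PPP;G;\alpha)$ is a finite union of polygons with boundary edges that are vertical or of slope~$\alpha$. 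Its closure is $\RRR_0$, and as the two differ only on a set of $2$-dimensional Lebesgue measure~$0$, the component $\RRR_0$ is itself such a finite union.

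The step I expect to be the main obstacle is precisely this lifting: showing that the forward sweep of $G$ exhausts every sheet of the cover $\RRR_0$, rather than merely its projection to the torus, after only finitely many traverses. The irrational rotation lemma delivers quantitative control of the covering downstairs, but one must fuse it with the finite covering degree of $\RRR_0$ over $[0,1)^2$ and with the minimality of the flow on the component to conclude that finitely many zig-zaggings suffice upstairs.
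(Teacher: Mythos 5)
Your proposal stalls exactly where you say it might: the lift from the torus to the component. To carry it out you assume that the flow restricted to $\RRR_0$ is invertible and measure preserving ``by the analysis in the proof of Theorem~\ref{thm4}'', that $\RRR_0$ is a finite-degree cover of $[0,1)^2$ with integer area, and that the component is minimal so that forward orbits equidistribute. None of this is available before the theorem is proved. In Case~2 of the proof of Theorem~\ref{thm4} the extension process may be infinite, and the paper warns that the limiting set may then be closed and much more complicated than a union of polygons, so Theorem~\ref{thm4} cannot be cited for the polygonal or covering structure of $\RRR_0$ --- that structure is the conclusion of Theorem~\ref{thm6}, not an input. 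Minimality of a component is likewise unproven (the recurrent set can split as in Example~\ref{ex23}, and even on a polygonal attractor minimality requires the connected overlapping graph hypothesis of Lemma~\ref{lem41}, cf.\ the discussion after Theorem~\ref{thm5}). And even granting that the forward sweep of $G$ has full measure in $\RRR_0$, full measure does not yield the statement: the sweep could exhaust $\RRR_0$ only in the limit, requiring infinitely many zig-zaggings, which is precisely the phenomenon that must be excluded.

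The paper closes this gap with machinery your outline lacks. It discretizes the dissipative flow to a \emph{non-invertible} map $f:[0,s)\to[0,s)$ projecting via $\pi$ to the rotation $g$ on $[0,1)$, shrinks $G$ by reverse flow to a multiplicity-one interval $\IIII_0$, and iterates $\IIII_j=\phi(\IIII_{j-1})\setminus\IIII_0$, where $\phi(\SSSS)$ collects the first returns of $\SSSS$ modulo one; the uniform return-time bound $N^*$ obtained from Kac's theorem (Lemma~\ref{lem52}) makes each swept-out circle copy $\Psi(\IIII_j)$ the product of finitely many zig-zaggings. The crux is Lemma~\ref{lem63}: the sets $\Psi(\IIII_{j'})$ and $\Psi(\IIII_{j''})$ are pairwise disjoint, proved by the backtracking uniqueness argument of Assertions A and B --- this is what substitutes for the invertibility you assumed, since $f$ itself is not injective. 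A measure pigeonhole in $[0,s)$ then forces $\Psi(\IIII_{j^*})$ to be null for some $j^*\le s$, so the construction terminates after at most $s$ stages, with the explicit zig-zag bound $\sum_{j=0}^{j^*-1}N^*(\pi(\IIII_j))$, and the resulting union is $f$-invariant. Your use of the irrational rotation lemma handles the covering downstairs, but the disjointness-plus-pigeonhole argument upstairs is the missing idea, and it cannot be recovered from ergodicity or minimality claims about $\RRR_0$.
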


Note that we have no information on the structure of the part of the recurrent set $\RRR(\PPP;\alpha)$ outside this special component,
so we are unable to draw any conclusions about the recurrent set $\RRR(\PPP;\alpha)$ as a whole.

We begin with some general facts.

Suppose that the underlying polysquare translation surface of the dissipative system $\PPP$ has $s$ atomic squares.
Then the dissipative flow of irrational slope $\alpha>0$ on $\PPP$ defines a non-bijective transformation
\begin{equation}\label{eq6.3}
f=f_\alpha:[0,s)\to[0,s),
\end{equation}
where the interval $[i-1,i)$ represents the left vertical edge of the $i$-th atomic square.

\begin{remark}
Strictly speaking, $f=f_\alpha$ is not an interval exchange transformation as dissipative flow is in general not reversible.
\end{remark}

Denote modulo one projection by
\begin{equation}\label{eq6.4}
\pi:[0,s)\to[0,1).
\end{equation}
The function $\pi$ relates the discretization of dissipative flow of slope $\alpha$ on $\PPP$
to the discretization of geodesic flow of slope $\alpha$ on the unit torus $[0,1)^2$.

Let $\SSSS\subset[0,s)$ be a subset of multiplicity~$1$.
In other words, the restriction
\begin{displaymath}
\pi_\SSSS:\SSSS\to[0,1):y\mapsto\pi(y)
\end{displaymath}
of $\pi$ to the set $\SSSS$ is injective.
We further assume that $\SSSS$ is a union of finitely many intervals, so likewise for the set
\begin{displaymath}
S=\pi(\SSSS).
\end{displaymath}

Let the modulo one projection of $f=f_\alpha$ be denoted by
\begin{equation}\label{eq6.5}
g=g_\alpha:[0,1)\to[0,1),
\end{equation}
so that
\textcolor{white}{xxxxxxxxxxxxxxxxxxxxxxxxxxxxxx}
\begin{equation}\label{eq6.6}
g\circ\pi=\pi\circ f.
\end{equation}
Then $g=g_\alpha$ describes rotation by $\alpha$ on the unit circle $[0,1)$,
and is the interval exchange transformation corresponding to the discretization of geodesic flow
of slope $\alpha$ on the unit torus $[0,1)^2$.

For any $x\in S\subset[0,1)$, let
\begin{displaymath}
n(x)=n_S(x)=\min\{n\ge1:g^n(x)\in S\}.
\end{displaymath}
Then it follows from Lemma~\ref{lem52} due to Kac that there exists a common threshold $N^*(S)=N^*(S;\alpha)$ such that
\begin{displaymath}
n(x)=n_S(x)\le N^*(S)
\quad\mbox{for all $x\in S$}.
\end{displaymath}
As a consequence of \eqref{eq5.11} in the proof of Lemma~\ref{lem52}, we have
\begin{equation}\label{eq6.7}
\!\!\!
[0,1)
=\bigcup_{n=1}^{N^*(S)}\bigcup_{k=0}^{n-1}g^k(\{x\in S:n_S(x)=n\})
=\bigcup_{n=1}^{N^*(S)}\bigcup_{k=1}^ng^k(\{x\in S:n_S(x)=n\}),
\end{equation}
apart from an exceptional set of measure~$0$ and where each union is pairwise disjoint.
We then lift this up to $[0,m)$ and obtain a quantity
\begin{equation}\label{eq6.8}
\Psi(\SSSS)=\bigcup_{n=1}^{N^*(S)}\bigcup_{k=1}^nf^k(\{y\in\SSSS:n_S(\pi(y))=n\}),
\end{equation}
and of course $\pi(\Psi(\SSSS))=[0,1)$.

The assumption that $\SSSS$ is a set of multiplicity $1$ implies that $\pi$ is injective on $\Psi(\SSSS)$;
in other words, $\Psi(\SSSS)$ is a copy of the unit torus $[0,1)$ generated from $\SSSS$ by the dissipative flow of slope~$\alpha$.
More importantly, $\Psi(\SSSS)$ is a set of multiplicity~$1$.

The set
\textcolor{white}{xxxxxxxxxxxxxxxxxxxxxxxxxxxxxx}
\begin{displaymath}
\phi(\SSSS)=\bigcup_{n=1}^{N^*(S)}f^n(\{y\in\SSSS:n_S(\pi(y))=n\})
\end{displaymath}
is essentially a copy~$\SSSS$, since
\begin{align}
\pi(\phi(\SSSS))
&
=\bigcup_{n=1}^{N^*(S)}\pi(f^n(\{y\in\SSSS:n_S(\pi(y))=n\}))
\nonumber
\\
&
=\bigcup_{n=1}^{N^*(S)}g^n(\pi(\{y\in\SSSS:n_S(\pi(y))=n\}))
\nonumber
\\
&
=\bigcup_{n=1}^{N^*(S)}g^n(\{x\in S:n_S(x)=n\})
\nonumber
\\
&
=S=\pi(\SSSS),
\nonumber
\end{align}
apart from an exceptional set of measure~$0$, as it follows from \eqref{eq6.7} that
\begin{displaymath}
\bigcup_{n=1}^{N^*(S)}g^n(\{x\in S:n_S(x)=n\})=\bigcup_{n=1}^{N^*(S)}\{x\in S:n_S(x)=n\}=S,
\end{displaymath}
apart from an exceptional set of measure~$0$.

Clearly $\phi(\SSSS)\subset\Psi(\SSSS)$, so $\phi(\SSSS)$ is a set of multiplicity~$1$.

We next set up the crucial step in the proof of Theorem~\ref{thm6}.

The open subset $G$ of $\PPP$ contains a disc of positive radius.
We may assume that this disc is sufficiently small that it is contained in a single atomic square of the underlying polysquare translation surface of $\PPP$
and the reverse flow in the direction $(-1,-\alpha)$ projects it on to the left vertical edge of a single atomic square,
as illustrated in Figure~37.

\begin{displaymath}
\begin{array}{c}
\includegraphics[scale=0.8]{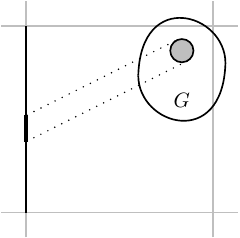}
\\
\mbox{Figure 37: constructing the interval $\IIII_0$}
\end{array}
\end{displaymath}

This gives rise to an interval $\IIII_0\subset[0,s)$ of positive length and multiplicity $1$
on the left vertical edge of an atomic square of the underlying polysquare translation surface of~$\PPP$.

Define the sets $\IIII_1,\IIII_2,\IIII_3,\ldots$ in terms of $\IIII_0$ inductively by
\begin{equation}\label{eq6.9}
\IIII_1=\phi(\IIII_0)\setminus\IIII_0, 
\quad
\IIII_j=\phi(\IIII_{j-1})\setminus\IIII_0,
\quad
j=2,3,4,\ldots,
\end{equation}
assuming that $\IIII_{j-1}$ has positive measure.
Note that each set $\IIII_1,\IIII_2,\IIII_3,\ldots$ is of multiplicity~$1$, and each $\Psi(\IIII_j)$ is a copy of $[0,1)$ if $\IIII_j$ has positive measure.

\begin{lemma}\label{lem63}
For any non-negative integers $j'\ne j''$, the sets $\Psi(\IIII_{j'})$ and $\Psi(\IIII_{j''})$ are disjoint.
\end{lemma}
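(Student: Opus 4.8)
The plan is to prove that the index $j$ is an invariant of the point, that is, that each $z$ can lie in at most one $\Psi(\IIII_j)$; disjointness is then immediate. I work inside the recurrent component of $\PPP$ containing $G$, on which the discretised dissipative flow $f\colon[0,s)\to[0,s)$ is one-to-one and measure preserving — this is precisely the $1$-to-$1$, measure-preserving behaviour extracted in Case~1 of the proof of Theorem~\ref{thm4} — so that the backward $f$-orbit of every such point is well defined and unique. Recall from \eqref{eq6.8} that each $\Psi(\IIII_j)$ is a subset of the union $[0,s)$ of left vertical edges which is a copy of $[0,1)$ of multiplicity $1$, so $\pi$ maps it bijectively onto $[0,1)$.

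First I would attach to each $z\in\Psi(\IIII_j)$ its \emph{base chain}. Because $\IIII_j\subset\phi(\IIII_{j-1})\subset\Psi(\IIII_{j-1})$, the recursion \eqref{eq6.9} lets me follow $z$ backwards along the flow through points $y_j\in\IIII_j,\ y_{j-1}\in\IIII_{j-1},\dots,y_0\in\IIII_0$, where $y_i=\phi(y_{i-1})$ is the first return of $y_{i-1}$ to its own level $S_{i-1}=\pi(\IIII_{i-1})$ and $z$ lies on the flight issuing from $y_j$. By construction $y_i\in\IIII_i=\phi(\IIII_{i-1})\setminus\IIII_0$, so $y_1,\dots,y_j$ all miss $\IIII_0$. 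The reduction I aim for is the \emph{key claim}: the $f$-orbit from $y_0$ up to $z$ meets $\IIII_0$ only at $y_0$. Granting it, $y_0$ is forced to be the most recent visit of the backward orbit of $z$ to $\IIII_0$, hence depends on $z$ alone; flowing forward from $y_0$ then reconstructs $y_1,y_2,\dots$ canonically, each as the first level return that avoids $\IIII_0$, and $z$ lands in exactly one flight, so $j=j(z)$ is determined by $z$. As backward determinacy makes the most recent $\IIII_0$-visit unique, no point can belong to two towers.

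Establishing the key claim is where I expect the real difficulty, because the levels shrink, $S_0\supseteq S_1\supseteq\cdots$, so between consecutive base points the orbit may cross the large level section $\{y:\pi(y)\in S_0\}$ several times, and a priori one such crossing could fall in $\IIII_0$ and be bypassed by the chain. The crisp statement that rules this out, for the first block, is that the first return of $y_1\in\IIII_1$ to the large level $S_0$, should it land in $\IIII_0$, must already lie in the smaller level $S_1=\pi(\IIII_1)$ and hence coincide with $y_2$ rather than precede it. To prove this I would compare, over the projected point in question, the two lifts that define $S_1$ — the $\IIII_0$-lift and the $\phi$-image lying over it — and invoke the \emph{injectivity} of the first-return section map of $f$ to the level-$S_0$ section: injectivity converts the hypothesis that the $\phi$-image lies in $\IIII_0$ into the assertion that these two lifts differ, which is exactly the defining condition for membership in $S_1$. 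A short finite downward cascade of this comparison forces the initial base point into $\IIII_0$ unless no visit was skipped, contradicting $y_1\in\IIII_1$; the general block follows by the same injectivity argument, inducting down the nested levels.

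Thus the single hardest point is the cascade at the top of the nested levels — showing that a first return to $S_0$ landing in $\IIII_0$ cannot bypass a genuine stop of the chain. Once this is in place, bijectivity of the section map together with uniqueness of backward orbits recast the whole construction as a Kakutani--Rokhlin tower over $\IIII_0$, giving Lemma~\ref{lem63}. Finally, since the now-disjoint sets $\Psi(\IIII_j)$ each carry $\lambda_1$-measure $1$ inside $[0,s)$, at most $s$ of them can have positive measure, which is the finiteness the construction ultimately requires.
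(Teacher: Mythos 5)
Your overall architecture coincides with the paper's: your ``key claim'' is exactly the paper's Assertion~B (the orbit from the base point to $z$ meets $\IIII_0$ only at the base), and your passage from it to the invariance of the index $j$ is the paper's Assertion~A, obtained by backtracking. The divergence, and the first gap, is your standing premise that the discretised flow $f$ is one-to-one with unique backward orbits on the component containing $G$, justified by appeal to ``Case~1 of the proof of Theorem~\ref{thm4}''. That citation does not support the claim: Case~1 is the perfect-balance case \eqref{eq5.12}, which in general fails (Case~2 exists precisely because it can fail), and the $1$-to-$1$, measure-preserving statement in the general case concerns the flow restricted to $\frakR(\PPP;A_\infty;\alpha)$, an object produced by a possibly infinite extension process whose identification with the recurrent set is exactly the kind of structural information Theorem~\ref{thm6} is meant to supply. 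The paper pointedly remarks, right after \eqref{eq6.3}, that $f$ is \emph{not} reversible, and its proof is engineered so that backward coincidences are resolved by multiplicity-$1$ comparisons (for instance $y_0,y^\star_0\in\IIII_0$) between two chains both of which carry the avoidance property, rather than by a global backward determinacy assumed up front.

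The second and more substantive gap is that your cascade is only carried out for the first block, and even there the ``crisp statement'' is internally inconsistent: if the first return of $y_1$ to the $S_0$-section lay in $\IIII_0$, it could not ``coincide with $y_2$'', since $y_2\in\IIII_2=\phi(\IIII_1)\setminus\IIII_0$ is disjoint from $\IIII_0$ by the very definition \eqref{eq6.9}; what must be shown is that the configuration is impossible (such a thread is pruned when $\IIII_2$ is formed). For the general block, ``the same injectivity argument, inducting down the nested levels'' conceals the real work: at level $\ell$ your two-lift comparison yields only that the previous $S_\ell$-crossing of the orbit lies in $\IIII_\ell$, and identifying that crossing with the chain point $y_\ell$ requires a further argument; nothing in a within-block cascade controls where the offending $\IIII_0$-visit sits relative to the nested levels. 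The paper closes exactly this hole by choosing the offending visit $y'$ with \emph{minimal} $h$, locating the exit level $\kappa<i$ through the identity $\pi(\IIII_{\kappa+1})=\pi(\IIII_\kappa)\setminus\pi(\phi(\IIII_\kappa)\cap\IIII_0)$, concluding $y'\in\phi(\IIII_\kappa)\subset\Psi(\IIII_\kappa)$, and then invoking the inductive hypothesis at level $\kappa$ to produce a \emph{second} chain ending at $y'$; the three-case backward comparison of the two chains (reaching $y_0$ first, reaching $y^\star_0$ first, or simultaneously, the last forcing $y'=y_{\kappa+1}\notin\IIII_0$) delivers the contradiction. Until your cascade is supplemented by something equivalent to this cross-chain step, or by a genuinely justified backward determinacy, the key claim, and with it the lemma, is not proved.
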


\begin{proof}[Proof of Theorem~\ref{thm6}]
By Lemma~\ref{lem63}, the $s+1$ sets
\begin{displaymath}
\Psi(\IIII_j)\subset[0,s),
\quad
j=0,1,2,3,\ldots,s,
\end{displaymath}
are pairwise disjoint.
Note next that $\Psi(\IIII_j)$ is a copy of $[0,1)$, and so has measure~$1$, if $\IIII_j$ has positive measure.
This implies that $\Psi(\IIII_{j^*})$ must have measure~$0$ for some $j^*=0,1,2,3,\ldots,s$.
Hence the set
\textcolor{white}{xxxxxxxxxxxxxxxxxxxxxxxxxxxxxx}
\begin{equation}\label{eq6.10}
\bigcup_{j=0}^{j^*-1}\Psi(\IIII_j)
\end{equation}
is generated from $\IIII_0$ in at most
\textcolor{white}{xxxxxxxxxxxxxxxxxxxxxxxxxxxxxx}
\begin{displaymath}
\sum_{j=0}^{j^*-1}N^*(\pi(\IIII_j))
\end{displaymath}
zig-zaggings.
Clearly the set \eqref{eq6.10} is invariant under~$f$.
Note that each set $\Psi(\IIII_j)$ for any integer $j\ge j^*$ has measure $0$ and so can be ignored.
\end{proof}

\begin{remark}
Figure~38 illustrates how we set up an induction process and explains the definitions in \eqref{eq6.9}.
It also explains the purpose of Lemma~\ref{lem63}.

\begin{displaymath}
\begin{array}{c}
\includegraphics[scale=0.8]{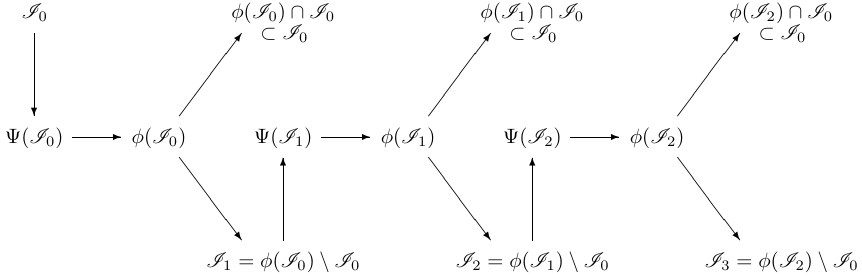}
\\
\mbox{Figure 38: the early steps of the induction process}
\end{array}
\end{displaymath}

The idea is as follows.
Starting from the set~$\IIII_0$, we generate the set $\Psi(\IIII_0)$ by the flow which is represented in discrete form by~$f$.
This set $\Psi(\IIII_0)$ has an interesting subset $\phi(\IIII_0)$ which collects together all the points corresponding to first returns to $\IIII_0$
modulo one.
In general, $\phi(\IIII_0)$ has a part that lies in~$\IIII_0$ and a part that lies outside~$\IIII_0$.
Since the part in $\IIII_0$ is already accounted for, we remove it from $\phi(\IIII_0)$ and consider the subset $\IIII_1=\phi(\IIII_0)\setminus\IIII_0$.
Next, we generate a set $\Psi(\IIII_1)$ which has an interesting subset~$\phi(\IIII_1)$.
Again, $\phi(\IIII_1)$ has a part that lies in~$\IIII_0$ and a part that lies outside~$\IIII_0$.
Hence we consider the subset $\IIII_2=\phi(\IIII_1)\setminus\IIII_0$.
And so on.
In this way, we ensure that the sets $\Psi(\IIII_0),\Psi(\IIII_1),\Psi(\IIII_2),\ldots$ are pairwise disjoint.
Together they take up all the points in $[0,s)$ corresponding to the component of the recurrent set under consideration.
\end{remark}

\begin{proof}[Proof of Lemma~\ref{lem63}]
Note that the lemma follows immediately from the uniqueness property of Assertion~A below.

\begin{assertiona}
For every integer $\ell\ge0$ and $y^*\in\Psi(\IIII_\ell)$, there is a unique sequence
\begin{displaymath}
(y_0,h_0),
\quad
(y_1,h_1),
\quad
(y_2,h_2),
\quad
\ldots,
\quad
(y_\ell,h_\ell),
\end{displaymath}
where $y_j\in\IIII_j$ and $h_j=1,2,3,\ldots$ for every $j=0,1,\ldots,\ell$, such that
\begin{displaymath}
y_1=f^{h_0}(y_0),
\quad
y_2=f^{h_1}(y_1),
\quad
\ldots,
\quad
y_\ell=f^{h_{\ell-1}}(y_{\ell-1}),
\quad
y^*=f^{h_\ell}(y_\ell).
\end{displaymath}
\end{assertiona}

Assertion~A follows from Assertion~B below by backtracking from $y^*$ to~$y_0$,
since the set $\IIII_0$ is never visited in between.

\begin{assertionb}
For every integer $\ell\ge0$ and $y^*\in\Psi(\IIII_\ell)$, there is a sequence
\begin{displaymath}
(y_0,h_0),
\quad
(y_1,h_1),
\quad
(y_2,h_2),
\quad
\ldots,
\quad
(y_\ell,h_\ell),
\end{displaymath}
where $y_j\in\IIII_j$ and $h_j=1,2,3,\ldots$ for every $j=0,1,\ldots,\ell$, such that the numbers
\begin{equation}\label{eq6.11}
\begin{array}{lllll}
y_0,
&\quad f(y_0),
&\quad f^2(y_0),
&\quad \ldots,
&\quad f^{h_0-1}(y_0),
\vspace{3pt}\\
y_1=f^{h_0}(y_0),
&\quad f(y_1),
&\quad f^2(y_1),
&\quad \ldots,
&\quad f^{h_1-1}(y_1),
\vspace{3pt}\\
y_2=f^{h_1}(y_1),
&\quad f(y_2),
&\quad f^2(y_2),
&\quad \ldots,
&\quad f^{h_2-1}(y_2),
\vspace{3pt}\\
&&\quad\ldots,
\vspace{3pt}\\
y_\ell=f^{h_{\ell-1}}(y_{\ell-1}),
&\quad f(y_\ell),
&\quad f^2(y_\ell),
&\quad \ldots,
&\quad f^{h_\ell-1}(y_\ell),
\vspace{3pt}\\
y^*=f^{h_\ell}(y_\ell),
\end{array}
\end{equation}
apart possibly from $y_0$ and~$y^*$, do not belong to~$\IIII_0$.
\end{assertionb}

\begin{remark}
In Assertion~A and Assertion~B, for every $j=0,1,\ldots,\ell-1,$ we have
\begin{displaymath}
f^{h_j}(y_j)=y_{j+1}\in\IIII_{j+1}\subset\phi(\IIII_j),
\quad\mbox{so that}\quad
h_j=n_{\pi(\IIII_j)}(\pi(y_j)).
\end{displaymath}
On the other hand, we have
\begin{displaymath}
f^{h_\ell}(y_\ell)\in\Psi(\IIII_\ell),
\quad\mbox{so that}\quad
h_\ell\le n_{\pi(\IIII_\ell)}(\pi(y_\ell)).
\end{displaymath}
\end{remark}

It remains to establish Assertion~B.
We proceed by induction on~$\ell$.

Consider first the initial case $\ell=0$, and let $y^*\in\Psi(\IIII_0)$.
It follows from \eqref{eq6.8} that there exist $n=1,\ldots,N^*(\pi(\IIII_0))$ and $k=1,\ldots,n$ such that
\begin{displaymath}
y^*\in f^k(\{y\in\IIII_0:n_{\pi(\IIII_0)}(\pi(y))=n\}),
\end{displaymath}
so that $y^*=f^k(y_0)$ for some $y_0\in\IIII_0$ satisfying $n_{\pi(\IIII_0)}(\pi(y_0))=n$, and so \eqref{eq6.11} for $\ell=0$ is satisfied with $h_0=k\le n$
and none of $f(y_0),f^2(y_0),\ldots,f^{k-1}(y_0)$ belongs to~$\IIII_0$.

Suppose next that $i>0$ and Assertion~B holds for every $\ell<i$.
Let $y^*\in\Psi(\IIII_i)$.
It follows from \eqref{eq6.8} that there exist $n=1,\ldots,N^*(\pi(\IIII_i))$ and $k=1,\ldots,n$ such that
\textcolor{white}{xxxxxxxxxxxxxxxxxxxxxxxxxxxxxx}
\begin{displaymath}
y^*\in f^k(\{y\in\IIII_i:n_{\pi(\IIII_i)}(\pi(y))=n\}),
\end{displaymath}
so that $y^*=f^k(y_i)$ for some $y_i\in\IIII_i$ satisfying $n_{\pi(\IIII_i)}(\pi(y_i))=n$.
Write $h_i=k\le n$.
Then we have a pair $(y_i,h_i)$ where $y_i\in\IIII_i$ and $h_i=1,2,3,\ldots,$ leading to the numbers
\textcolor{white}{xxxxxxxxxxxxxxxxxxxxxxxxxxxxxx}
\begin{equation}\label{eq6.12}
\begin{array}{lllll}
y_i,
&\quad f(y_i),
&\quad f^2(y_i),
&\quad \ldots,
&\quad f^{h_i-1}(y_i),
\vspace{3pt}\\
y^*=f^{h_i}(y_i).
\end{array}
\end{equation}
Since $y_i\in\IIII_i\subset\Psi(\IIII_{i-1})$, using the case $\ell=i-1$, there is a sequence
\begin{displaymath}
(y_0,h_0),
\quad
(y_1,h_1),
\quad
(y_2,h_2),
\quad
\ldots,
\quad
(y_{i-1},h_{i-1}),
\end{displaymath}
where $y_j\in\IIII_j$ and $h_j=1,2,3,\ldots$ for every $j=0,1,2,\ldots,i-1$, such that the numbers
\begin{equation}\label{eq6.13}
\begin{array}{lllll}
y_0,
&\quad f(y_0),
&\quad f^2(y_0),
&\quad \ldots,
&\quad f^{h_0-1}(y_0),
\vspace{3pt}\\
y_1=f^{h_0}(y_0),
&\quad f(y_1),
&\quad f^2(y_1),
&\quad \ldots,
&\quad f^{h_1-1}(y_1),
\vspace{3pt}\\
y_2=f^{h_1}(y_1),
&\quad f(y_2),
&\quad f^2(y_2),
&\quad \ldots,
&\quad f^{h_2-1}(y_2),
\vspace{3pt}\\
&&\quad\ldots,
\vspace{3pt}\\
y_{i-1}=f^{h_{i-2}}(y_{i-2}),
&\quad f(y_{i-1}),
&\quad f^2(y_{i-1}),
&\quad \ldots,
&\quad f^{h_{i-1}-1}(y_{i-1}),
\vspace{3pt}\\
y_i=f^{h_{i-1}}(y_{i-1}),
\end{array}
\end{equation}
apart possibly from $y_0$ and~$y_i$, do not belong to~$\IIII_0$.
Clearly \eqref{eq6.13} combines with \eqref{eq6.12} to give the existence aspect of \eqref{eq6.11} for the case $\ell=i$.
It remains to show that the numbers in \eqref{eq6.12}, apart possibly from~$y^*$, do not belong to~$\IIII_0$.

Clearly $y_i\in\IIII_i=\phi(\IIII_{i-1})\setminus\IIII_0$ does not belong to~$\IIII_0$.
Suppose on the contrary that some other number in \eqref{eq6.12}, apart from $y_i$ and~$y^*$, belongs to~$\IIII_0$.
Then it is of the form
\textcolor{white}{xxxxxxxxxxxxxxxxxxxxxxxxxxxxxx}
\begin{displaymath}
f^h(y_i)
\quad
\mbox{for some $h=1,\ldots,h_i-1$},
\end{displaymath}
and is clearly in $\Psi(\IIII_i)$ but not in~$\phi(\IIII_i)$.
Let $y'\in\IIII_0$ denote the one with minimal value of~$h$.
Then
\textcolor{white}{xxxxxxxxxxxxxxxxxxxxxxxxxxxxxx}
\begin{equation}\label{eq6.14}
y'\in\IIII_0
\quad\mbox{and}\quad
y'\not\in\phi(\IIII_i).
\end{equation}
We can next truncate \eqref{eq6.12} and replace it with
\begin{equation}\label{eq6.15}
\begin{array}{lllll}
y_i,
&\quad f(y_i),
&\quad f^2(y_i),
&\quad \ldots,
&\quad f^{h-1}(y_i),
\vspace{3pt}\\
y'=f^h(y_i).
\end{array}
\end{equation}
Combining \eqref{eq6.13} and \eqref{eq6.15} leads to
\begin{equation}\label{eq6.16}
\begin{array}{lllll}
y_0,
&\quad f(y_0),
&\quad f^2(y_0),
&\quad \ldots,
&\quad f^{h_0-1}(y_0),
\vspace{3pt}\\
y_1=f^{h_0}(y_0),
&\quad f(y_1),
&\quad f^2(y_1),
&\quad \ldots,
&\quad f^{h_1-1}(y_1),
\vspace{3pt}\\
y_2=f^{h_1}(y_1),
&\quad f(y_2),
&\quad f^2(y_2),
&\quad \ldots,
&\quad f^{h_2-1}(y_2),
\vspace{3pt}\\
&&\quad\ldots,
\vspace{3pt}\\
y_{i-1}=f^{h_{i-2}}(y_{i-2}),
&\quad f(y_{i-1}),
&\quad f^2(y_{i-1}),
&\quad \ldots,
&\quad f^{h_{i-1}-1}(y_{i-1}),
\vspace{3pt}\\
y_i=f^{h_{i-1}}(y_{i-1}),
&\quad f(y_i),
&\quad f^2(y_i),
&\quad \ldots,
&\quad f^{h-1}(y_i),
\vspace{3pt}\\
y'=f^h(y_i),
\end{array}
\end{equation}
where, apart possibly from $y_0$ and~$y'$, the terms do not belong to~$\IIII_0$.

Meanwhile, it follows from \eqref{eq6.14} that
\begin{displaymath}
\pi(y')\in\pi(\IIII_0)\setminus\pi(\phi(\IIII_i))
=\pi(\IIII_0)\setminus\pi(\IIII_i).
\end{displaymath}
It is easy to check that $\pi(\IIII_j)\subset\IIII_{j-1}$ for every $j=1,2,3,\ldots,$ so there exists some $\kappa<i$ such that
\textcolor{white}{xxxxxxxxxxxxxxxxxxxxxxxxxxxxxx}
\begin{displaymath}
\pi(y')\in\pi(\IIII_\kappa)
\quad\mbox{and}\quad
\pi(y')\not\in\pi(\IIII_{\kappa+1}).
\end{displaymath}
Now
\textcolor{white}{xxxxxxxxxxxxxxxxxxxxxxxxxxxxxx}
\begin{align}
\pi(\IIII_{\kappa+1})
&
=\pi(\phi(\IIII_\kappa)\setminus\IIII_0)
=\pi(\phi(\IIII_\kappa))\setminus\pi(\phi(\IIII_\kappa)\cap\IIII_0)
\nonumber
\\
&
=\pi(\IIII_\kappa)\setminus\pi(\phi(\IIII_\kappa)\cap\IIII_0),
\nonumber
\end{align}
so that $\pi(y')\in\pi(\phi(\IIII_\kappa)\cap\IIII_0)$.
Together with the assumption $y'\in\IIII_0$, this leads to
\begin{displaymath}
y'\in\phi(\IIII_\kappa)\subset\Psi(\IIII_\kappa).
\end{displaymath}
The case $\ell=\kappa$ of the assertion now gives a sequence
\begin{displaymath}
(y^\star_0,h^\star_0),
\quad
(y^\star_1,h^\star_1),
\quad
(y^\star_2,h^\star_2),
\quad
\ldots,
\quad
(y^\star_\kappa,h^\star_\kappa),
\end{displaymath}
where $y^\star_k\in\IIII_k$ and $h^\star_k=1,2,3,\ldots$ for every $k=0,1,2,\ldots,\kappa$, and the numbers
\begin{equation}\label{eq6.17}
\begin{array}{lllll}
y^\star_0,
&\quad f(y^\star_0),
&\quad f^2(y^\star_0),
&\quad \ldots,
&\quad f^{h^\star_0-1}(y^\star_0),
\vspace{3pt}\\
y^\star_1=f^{h^\star_0}(y^\star_0),
&\quad f(y^\star_1),
&\quad f^2(y^\star_1),
&\quad \ldots,
&\quad f^{h^\star_1-1}(y^\star_1),
\vspace{3pt}\\
y^\star_2=f^{h^\star_1}(y^\star_1),
&\quad f(y^\star_2),
&\quad f^2(y^\star_2),
&\quad \ldots,
&\quad f^{h^\star_2-1}(y^\star_2),
\vspace{3pt}\\
&&\quad\ldots,
\vspace{3pt}\\
y^\star_\kappa=f^{h^\star_{\kappa-1}}(y^\star_{\kappa-1}),
&\quad f(y^\star_\kappa),
&\quad f^2(y^\star_\kappa),
&\quad \ldots,
&\quad f^{h^\star_\kappa-1}(y^\star_\kappa),
\vspace{3pt}\\
y'=f^{h^\star_\kappa}(y^\star_\kappa),
\end{array}
\end{equation}
where, apart possibly from $y^\star_0$ and $y'$, the terms do not belong to~$\IIII_0$.

Note that the two sequences \eqref{eq6.16} and \eqref{eq6.17} have the same endpoint~$y'$.
Working backward along \eqref{eq6.16} and \eqref{eq6.17} one term at a time,
we may reach $y_0$ or $y^\star_0$ first, or at the same step.

If this process reaches $y_0$ first, then $y_0$ must be one of the terms in \eqref{eq6.17} that is different from $y^\star_0$ and~$y'$,
so that $y_0\not\in\IIII_0$, a contradiction.

If this process reaches $y^\star_0$ first, then $y^\star_0$ must be one of the terms in \eqref{eq6.16} that is different from $y_0$ and~$y'$,
so that $y^\star_0\not\in\IIII_0$, a contradiction.

If the process reaches $y_0$ and $y^\star_0$ at the same step, then clearly $y^\star_0=y_0$.
Then successively we have $y^\star_1=y_1$, $y^\star_2=y_2$, and so on, and eventually $y'=y_{\kappa+1}$.
But $y'\in\IIII_0$, and $y_{\kappa+1}\in\IIII_{\kappa+1}=\phi(\IIII_\kappa)\setminus\IIII_0$ does not belong to~$\IIII_0$,
thus leading to a contradiction also.

This completes the proof.
\end{proof}

We complete this section by giving a complete answer to the question of the finiteness of the extension process.

\begin{theorem}\label{thm7}
Consider dissipative flow of slope~$\alpha$, where $\alpha>0$ is irrational, on a system $\PPP$,
where a finite polysquare translation surface has beem modified with the inclusion of a one-sided barrier $B$ on the common vertical edge
of two neighbouring atomic squares, in the form of a union of finitely many vertical intervals,
and where a set $A$ corresponding to $B$ has been chosen on a different vertical edge on the same horizontal street that contains~$B$.
Then the recurrent set $\RRR(\PPP;\alpha)$ and the transient set $\WWW(\PPP;\alpha)$ can be constructed by an extension process
on the no-go zone $\NNN(\PPP;\alpha)$.
Furthermore, this process is finite, and both sets $\RRR(\PPP;\alpha)$ and $\WWW(\PPP;\alpha)$ are finite unions of polygons
with boundary edges that are vertical or of slope~$\alpha$.
\end{theorem}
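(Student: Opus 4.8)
The plan is to combine the extension-process framework of Theorem~\ref{thm4} with the termination mechanism of Theorem~\ref{thm6}, reducing the whole assertion to a single topological statement about the recurrent set. Following Theorem~\ref{thm4}, we obtain the decreasing sequence $A\supset A_1\supset A_2\supset\cdots$ with $A_\infty=\bigcap_i A_i$ and $\RRR(\PPP;\alpha)=\FFF(A_\infty)$, so that the extension process terminates after finitely many steps precisely when $A_\infty$ is a finite union of intervals. Since sweeping an interval by the flow of slope~$\alpha$ produces an open strip, while sweeping a nowhere-dense set produces a set with empty interior, this is in turn equivalent to the assertion that $\RRR(\PPP;\alpha)$ has nonempty interior. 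Thus the entire theorem will follow once we show that the recurrent set always contains an open subset.

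Granting this, the argument closes quickly. Any component of $\RRR(\PPP;\alpha)$ that contains an open subset is, by Theorem~\ref{thm6}, a finite union of polygons with boundary edges vertical or of slope~$\alpha$, generated from a starting interval $\IIII_0$ in at most $s$ zig-zagging stages, where $s$ is the number of atomic squares; this is because the sets $\Psi(\IIII_j)$ of Lemma~\ref{lem63} are pairwise disjoint copies of the torus, so at most $s$ of them can carry positive measure. Each such component therefore has integer area at least~$1$, whence $\RRR(\PPP;\alpha)$ has at most $s$ components. Applying Theorem~\ref{thm6} to each in turn exhausts $\RRR(\PPP;\alpha)$ and yields the polygonal description, with $\WWW(\PPP;\alpha)=\PPP\setminus\RRR(\PPP;\alpha)$ the complementary finite union of polygons.

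The hard part, and the step I expect to be the main obstacle, is to produce the open subset, equivalently to rule out a Cantor-like attractor $A_\infty$ that is nowhere dense of positive measure. To attack this I would analyse the induced dissipative return map: the first-return map $\T^{*}$ of the unsplit geodesic flow to $A\cup B$ is a finite bijective interval exchange transformation, and composing it with the barrier identification $J\colon B\to A$ that preserves the second coordinate yields the dissipative return map $D=J\circ\T^{*}|_A$, a finite interval translation map of the single interval~$A$ whose global attractor is exactly $A_\infty$. Its discretization $f=f_\alpha$ from \eqref{eq6.3} projects modulo one, via the injective map $\pi|_A$, onto a uniquely ergodic rotation-induced map, the only source of non-injectivity of $D$ being the finite set of discontinuities coming from the endpoints of $A$ and $B$ together with the atomic-square edges. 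On the recurrent part the flow is conservative, invertible and measure preserving by Theorem~\ref{thm4}, and projects to the ergodic torus flow of slope~$\alpha$, so $\RRR(\PPP;\alpha)$ is a measure-theoretic $k$-fold cover of the torus for an integer $k\ge1$. The delicate point is that a priori this cover could be \emph{wild} rather than geometric, exactly as measurable but discontinuous coboundaries produce Cantor-like invariant sets for generic finite extensions of irrational rotations. I expect to exclude this by exploiting that the base here is not an arbitrary rotation extension but an induced map of a genuine flat polysquare flow: the time-quantitative Gutkin--Veech theorem, already used to produce the threshold governing $\T^{*}$, confers the Veech-type rigidity that forbids wild coboundaries, forcing the invariant sheet-selection to be locally constant off finitely many flow lines. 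Consequently $A_\infty$ is a finite union of intervals, $\RRR(\PPP;\alpha)$ has nonempty interior, and the mechanism of Theorem~\ref{thm6} then terminates after finitely many stages and completes the proof; the rational-endpoint case of Theorem~\ref{thm5} is recovered as the special situation where this rigidity is visible directly through the denominator bound of Lemma~\ref{lem62}.
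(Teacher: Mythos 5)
You have correctly isolated the crux: everything reduces to excluding a Boshernitzan--Kornfeld-type attractor, that is, a nowhere dense $A_\infty$ of positive measure. But at exactly this point your argument stops being a proof. The sentence ``I expect to exclude this by exploiting \dots\ the time-quantitative Gutkin--Veech theorem \dots\ confers the Veech-type rigidity that forbids wild coboundaries'' names no mechanism: the Gutkin--Veech threshold only bounds return times of the ordinary, invertible geodesic flow to $A\cup B$, and nothing in it controls which measurable sheet-selections over the rotation base can arise as invariant sets of the non-invertible dissipative return map. Since Section~\ref{sec7} of the paper shows that for interval translation maps just outside the polysquare class the attractor genuinely \emph{is} a measure-zero Cantor set, any proof must use the polysquare structure concretely, and your proposal never does. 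The paper's actual route is constructive rather than rigidity-based: for a $k$-clear interval $I$ (one whose orbit $I,g(I),\ldots,g^k(I)$ under the rotation $g$ avoids the splitting set $\QQQ$ and is pairwise disjoint), the dissipative map acts on the $s$ sheets $I+i$ through a map $\sigma_{k,I}:\MMMM\to\MMMM$ with $f^k(I+i)=g^k(I)+\sigma_{k,I}(i)$; Lemma~\ref{lem64}, whose engine is Kac's theorem (Lemma~\ref{lem52}) applied to the return map of $g$ to $g^k(I)$, shows that either $\bigcup_{i\in\sigma_{k,I}(\MMMM)}(g^k(I)+i)$ generates a recurrent region in finitely many steps, or one can pass to a $k^*$-clear subinterval $I^*\subset I$ with $\vert\sigma_{k^*,I^*}(\MMMM)\vert\le\vert\sigma_{k,I}(\MMMM)\vert-1$. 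Since this cardinality is at most~$s$, the descent terminates, and the open recurrent region you hoped to conjure from rigidity is produced explicitly.

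There is a second gap: even granting an open subset of $\RRR(\PPP;\alpha)$, your exhaustion step fails. Theorem~\ref{thm6} describes only the component containing the given open set, and the paper states explicitly after that theorem that it yields no information about the recurrent set outside that component; so ``applying Theorem~\ref{thm6} to each component in turn'' cannot rule out an additional positive-measure recurrent part with empty interior coexisting with the nice components. The paper closes this hole not componentwise but by a maximality argument: the region $\RRR$ constructed via Lemma~\ref{lem64} meets every recurrent region $\RRR'$ in positive measure, because any generating set $\IIII'$ of $\RRR'$ projects onto all of $[0,1)$, hence meets some sheet $I+i^*$ in positive measure, and recurrence of $\RRR'$ forces $\IIII'$ to meet $f^k(I+i^*)$ as well; by the Remark preceding the proof of Theorem~\ref{thm7}, a recurrent region meeting every other one contains them all apart from null sets, so $\RRR=\RRR(\PPP;\alpha)$ up to measure zero. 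Without a genuine replacement for Lemma~\ref{lem64} and without this maximality step (your claimed equivalence between termination of the extension process and nonempty interior of $\RRR(\PPP;\alpha)$ is likewise asserted rather than proved), your outline records the right target but not a proof.
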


We first make some preliminary discussion where we introduce the new ideas.

As before, we make use of functions $f$, $\pi$ and $g$ satisfying \eqref{eq6.3}--\eqref{eq6.6} first introduced earlier in this section,
where we use the interval $[0,s)$ to describe the left vertical edges of the $s$ atomic squares of the finite polysquare translation surface,
with $[i,i+1)$ representing the left vertical edge of the $i$-th atomic square, where the index $i\in\MMMM=\{0,1,\ldots,s-1\}$.

The one-sided barrier $B$ on a common vertical edge of two atomic squares is a union of finitely many intervals, so that
\begin{displaymath}
B\subset[i,i+1)\subset[0,s)
\end{displaymath}
for some $i\in\MMMM=\{0,1,\ldots,s-1\}$, and the boundary $\partial B$ is a finite set.
Denote the set of \textit{splitting} points by
\textcolor{white}{xxxxxxxxxxxxxxxxxxxxxxxxxxxxxx}
\begin{displaymath}
\QQQ=\pi(\partial B)\cup\{0\}.
\end{displaymath}

To avoid these splitting points, we say that an interval $I=(a,b)\subset[0,1)$ is $k$-stable if
\textcolor{white}{xxxxxxxxxxxxxxxxxxxxxxxxxxxxxx}
\begin{displaymath}
g^j(I)\cap Q=\emptyset
\quad
\mbox{for every $j=0,1,\ldots,k$}.
\end{displaymath}
It is easy to check that the following hold:

(S1)
If $I'\subset I''$, then $I'$ is $k$-stable if $I''$ is $k$-stable.

(S2)
If $I$ is $k$-stable, then $I$ is $j$-stable for every $j=0,1,\ldots,k$.

(S3)
If $k\ge2$ and $I$ is $k$-stable, then $g(I)$ is $(k-1)$-stable.

(S4)
If $I=(a,b)$ is not $k$-stable, then there exists a finite partition
\begin{displaymath}
a=x_0<x_1<\ldots<x_n=b
\end{displaymath}
of $I$ such that $(x_{\ell-1},x_\ell)$ is $k$-stable for every $\ell=1,\ldots,n$.

(S5)
If $I$ is $1$-stable, then there exists a mapping $\sigma_I:\MMMM\to\MMMM$ such that
\begin{equation}\label{eq6.18}
f(I+i)=g(I)+\sigma_I(i),
\quad
i\in\MMMM.
\end{equation}
To see this, note that if $I$ is $1$-stable, then it follows from \eqref{eq6.6} that
\begin{displaymath}
\pi(f(I+i))=g(\pi(I+i))=g(\pi(I))=g(I)
\end{displaymath}
does not split, and so it follows that $f(I+i)\subset[i',i'+1)$ for some $i'\in\MMMM$.
We now simply take $i'=\sigma_I(i)$.

The assumption that $I$ is $k$-stable leads to a sequence
\begin{equation}\label{eq6.19}
I\longrightarrow g(I)\longrightarrow g^2(I)\longrightarrow\ldots\longrightarrow g^k(I)
\end{equation}
of splitting-free intervals.
In view of (S2), (S3) and (S5), this gives rise to a sequence
\begin{displaymath}
\MMMM
\mathop{\longrightarrow}^{\sigma_I}
\MMMM
\mathop{\makebox[0.85cm]{\rightarrowfill}}^{\sigma_{g(I)}}
\MMMM
\longrightarrow
\ldots
\longrightarrow
\MMMM
\mathop{\makebox[1.3cm]{\rightarrowfill}}^{\sigma_{g^{k-1}(I)}}
\MMMM
\end{displaymath}
of mappings
\textcolor{white}{xxxxxxxxxxxxxxxxxxxxxxxxxxxxxx}
\begin{displaymath}
\sigma_{g^j(I)}:\MMMM\to\MMMM,
\quad
j=0,1,\ldots,k-1.
\end{displaymath}
Denote their composition mapping $\sigma_{k,I}:\MMMM\to\MMMM$ by
\begin{displaymath}
\sigma_{k,I}=\sigma_{g^{k-1}(I)}\circ\ldots\circ\sigma_{g(I)}\circ\sigma_I.
\end{displaymath}
Then corresponding to (S5) and \eqref{eq6.18}, we have
\begin{equation}\label{eq6.20}
f^k(I+i)=g^k(I)+\sigma_{k,I}(i),
\quad
i\in\MMMM.
\end{equation}

\begin{remark}
Note that \eqref{eq6.18} and \eqref{eq6.20} represent not only equality of sets but crucially also mappings of intervals.
\end{remark}

Next, to avoid self intersection of the sets in the sequence \eqref{eq6.19}, we say that
an interval $I=(a,b)\subset[0,1)$ is $k$-clear if $I$ is $k$-stable and the sets
\begin{displaymath}
I,g(I),g^2(I),\ldots,g^k(I)
\end{displaymath}
are pairwise disjoint.
It is easy to check that the following hold:

(C1)
If $I$ is $k$-stable, then there exists a $k$-clear subinterval $I'\subset I$.
To see this, take $x\in I$ to be the midpoint of~$I$, let
\begin{displaymath}
r=\min_{\substack{{j_1,j_2\in\{0,1,\ldots,k\}}\\{j_1\ne j_2}}}\vert g^{j_1}(x)-g^{j_2}(x)\vert,
\end{displaymath}
and consider the interval $(x-r/2,x+r/2)$.
We now simply take a sufficiently small $r'<r/2$ to ensure that $I'=(x-r',x+r')\subset I$.
Here the choice of $x$ is justified by the Remark above, and the value of $r$ is independent of the choice of $x\in I$.

(C2)
If $I$ is $k$-clear and $i'\not\in\sigma_{k,I}(\MMMM)$, then for every $i\in\MMMM$ and integer $j\ge0$,
\begin{equation}\label{eq6.21}
f^j(I+i)\cap(g^k(I)+i')=\emptyset.
\end{equation}
To see this, suppose on the contrary that
\begin{displaymath}
x_1\in f^j(I+i)\cap(g^k(I)+i'),
\quad\mbox{so that}\quad
\pi(x_1)\in g^j(I)\cap g^k(I).
\end{displaymath}
This means that $g^j(I)\cap g^k(I)\ne\emptyset$, so that we must have $j\ge k$.
There exists $x_2\in I+i$ such that $x_1=f^j(x_2)$.
Then the number $x_3=f^{j-k}(x_2)$ satisfies
\begin{displaymath}
f^k(x_3)=f^j(x_2)=x_1\in g^k(I)+i',
\end{displaymath}
and it follows from \eqref{eq6.20} that $i'\in\sigma_{k,I}(\MMMM)$, a contradiction.

(C3)
If $I$ is $k$-clear, $i'\in\sigma_{k,I}(\MMMM)$ and $i''\not\in\sigma_{k,I}(\MMMM)$, then for every integer $j\ge0$,
\begin{displaymath}
f^j(g^k(I)+i')\cap(g^k(I)+i'')=\emptyset.
\end{displaymath}
To see this, suppose on the contrary that
\begin{displaymath}
x_1\in f^j(g^k(I)+i')\cap(g^k(I)+i'').
\end{displaymath}
Then there exists $x_2\in g^k(I)+i'$ such that $f^j(x_2)=x_1$.
Meanwhile, it follows from \eqref{eq6.20} that there exists $i\in\MMMM$ such that
\begin{displaymath}
f^k(I+i)=g^k(I)+i',
\end{displaymath}
so there exists $x_3\in I+i$ such that $f^k(x_3)=x_2$.
Thus $x_1=f^{j+k}(x_3)$, so that $f^{j+k}(I+i)\cap(g^k(I)+i'')\ne\emptyset$, contradicting \eqref{eq6.21} and (C2).

The proof of Theorem~\ref{thm7} is based on the following critical intermediate result.

\begin{lemma}\label{lem64}
If $I=(a,b)\subset[0,1)$ is $k$-clear, then one of the following holds:

\emph{(i)}
The set
\textcolor{white}{xxxxxxxxxxxxxxxxxxxxxxxxxxxxxx}
\begin{equation}\label{eq6.22}
\bigcup_{i\in\sigma_{k,I}(\MMMM)}(g^k(I)+i)
\end{equation}
generates a recurrent region in a finite number of steps.

\emph{(ii)}
There exists a subinterval $I^*\subset I$ and an integer $k^*>k$ such that $I^*$ is $k^*$-clear and
\textcolor{white}{xxxxxxxxxxxxxxxxxxxxxxxxxxxxxx}
\begin{displaymath}
\vert\sigma_{k^*,I^*}(\MMMM)\vert\le\vert\sigma_{k,I}(\MMMM)\vert-1.
\end{displaymath}
\end{lemma}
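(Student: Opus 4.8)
The plan is to run the dichotomy off the cardinality of the shift-image as the column over $g^k(I)$ is carried forward by the flow. Write $J=g^k(I)$ and $\Sigma=\sigma_{k,I}(\MMMM)$, and recall from (S5), (6.18) and (6.20) that the shift maps are insensitive to passing to subintervals, so that for any subinterval $I^*\subset I$ one has $\sigma_{m,I^*}=\sigma_{m,I}$ on $\MMMM$ whenever $I^*$ is $m$-stable (this is immediate, since $f(I^*+i)\subset f(I+i)$ lands in the same atomic square). Since $\sigma_{m+1,I}=\sigma_{g^m(I)}\circ\sigma_{m,I}$ is a composition of self-maps of the finite set $\MMMM$, the cardinalities $|\sigma_{m,I}(\MMMM)|$ are non-increasing for $m\ge k$ and never exceed their initial value $|\Sigma|=|\sigma_{k,I}(\MMMM)|$. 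The whole argument turns on whether this cardinality stays equal to $|\Sigma|$ throughout one complete first return of the rotation $g$ to the interval $J$, or collapses strictly at some finite step.

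First I would treat the case where a strict drop occurs. The first-return map of the rotation $g$ to $J$ has at most three distinct return times (the three-distance phenomenon), so $J$ splits into finitely many subintervals, on each of which the excursion back to $J$ is splitting-free after a harmless shrinking by (S4). If on one of these pieces the associated composite shift map fails to be injective on $\Sigma$, I localise inside that piece: use (S4) to restore stability for the required number of steps and then (C1) to pass to a $k^*$-clear subinterval $I^*\subset I$, where $k^*>k$ is the step at which the first merging is detected, so that $k^*\le k+R$ with $R$ the bounded return time. Because the shift maps are unaffected by restriction, $\sigma_{k^*,I^*}(\MMMM)=\sigma_{k^*,I}(\MMMM)$, and this composite has strictly identified two elements of $\Sigma$; hence $|\sigma_{k^*,I^*}(\MMMM)|\le|\Sigma|-1$. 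This is conclusion (ii).

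In the complementary case no merging occurs in any return excursion, and I would show that the set (6.22), say $V=\bigcup_{i\in\Sigma}(J+i)$, seeds a recurrent region. Properties (C2) and (C3) are precisely what guarantee that the forward $f$-orbit of $V$ meets the column over $J$ only at levels belonging to $\Sigma$; consequently the first return of the flow to $V$ is a well-defined self-map $\hat f\colon V\to V$. Its return time is bounded by the time-quantitative Gutkin--Veech theorem (equivalently by Kac, Lemma~\ref{lem52}), so the first-return strip is swept in finitely many zig-zaggings and is a finite union of polygons with boundary edges vertical or of slope~$\alpha$. Lemma~\ref{lem51} makes $\hat f$ measure preserving, and the assumed absence of merging makes it injective on each return piece, hence a measure-preserving bijection of $V$. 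The Poincar\'e recurrence theorem applied to $\hat f$ then shows that $\lambda_2$-almost every point of the swept region is non-wandering, so the region is recurrent, which is conclusion (i).

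The delicate point, on which I would spend most care, is the equivalence between the finite, checkable condition --- no cardinality drop across the boundedly many first-return excursions from $J$ --- and genuine recurrence in (i). Concretely I must verify that injectivity of the lifted first-return map on each of the at most three return pieces really forces $\hat f$ to be a bijection of the full multiplicity-$|\Sigma|$ column $V$; that (C2) and (C3) leave no level outside $\Sigma$ reachable, so that $\hat f$ does not leak measure out of $V$; and that the bounded return time genuinely exhausts every excursion able to pass over the barrier $B$, so that inspecting finitely many steps is not merely necessary but sufficient for recurrence. Managing the interaction of the splitting set $\QQQ$ with the three return pieces --- shrinking by (S4) without destroying the very merging one is trying to detect --- is where the bookkeeping will be heaviest.
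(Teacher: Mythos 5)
Your proposal is correct and follows essentially the same route as the paper's own proof: partition $g^k(I)$ into finitely many stable pieces of constant first-return time (the paper uses the Kac bound $N^*(g^k(I))$ together with (S4), you use the three-gap phenomenon plus (S4), which is equivalent), use (C2)--(C3) to confine the returned column to levels in $\Sigma=\sigma_{k,I}(\MMMM)$, and then dichotomize on whether the induced level map is a bijection of $\Sigma$ on every piece --- yielding an interval exchange transformation of the column $\bigcup_{i\in\Sigma}(g^k(I)+i)$ and hence a recurrent region in finitely many steps --- or merges two levels on some piece, in which case (C1) produces a $k^*$-clear $I^*$ with $\vert\sigma_{k^*,I^*}(\MMMM)\vert\le\vert\Sigma\vert-1$, exactly as in the paper's sets $\MMMM_{3,\ell}=\sigma_{k_\ell,D_\ell}(\MMMM_1)$. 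One notational slip worth fixing: $\sigma_{k^*,I}$ need not be defined since $I$ itself need not be $k^*$-stable, so the restriction-invariance you invoke should be stated relative to the stable return piece $P_\ell\subset I$ containing $I^*$ (which is what your ``localise inside that piece'' step already intends), after which the argument matches the paper's.
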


Here a \textit{recurrent region} is a set of recurrent points that is invariant under the dissipative flow.

\begin{remark}
Suppose that a recurrent region $\RRR$ of a dissipative system $\PPP$ has non-trivial intersection with every other recurrent region of~$\PPP$.
Then apart possibly from exceptional sets of measure~$0$, $\RRR$ contains every other recurrent region of~$\PPP$.
To see this, note that if $\RRR'$ is another recurrent region of~$\PPP$, then so is $\RRR'\setminus\RRR$.
If $\RRR'\setminus\RRR$ is non-trivial, then $\RRR$ intersects non-trivially with $\RRR'\setminus\RRR$, clearly a contradiction.
Hence apart possibly from exceptional sets of measure~$0$, we must have $\RRR'\subset\RRR$.

Here we have used the fact that if $\RRR$ and $\RRR'$ are recurrent regions of~$\PPP$, then so are the region $\RRR\cup\RRR'$
and its three pairwise disjoint constituent parts $\RRR\cap\RRR'$, $\RRR\setminus\RRR'$ and $\RRR'\setminus\RRR$.
To see this, note that it is straightforward that $\RRR\cup\RRR'$ and $\RRR\cap\RRR'$ are recurrent regions of~$\PPP$.
Clearly $\RRR'\setminus\RRR\subset\RRR'$, so no point of $\RRR'\setminus\RRR$ can move into $\RRR\setminus\RRR'$ under the dissipative flow.
To show that $\RRR'\setminus\RRR$ is a recurrent region,
suppose on the contrary that $\RRR'\setminus\RRR$ is not invariant under the dissipative flow,
and a positive proportion of it can move into $\RRR\cap\RRR'$ under the flow.
Since the dissipative flow is measure preserving, it follows that a positive proportion of $\RRR\cap\RRR'$ moves outside itself under the flow,
contradicting that $\RRR\cap\RRR'$ is a recurrent region.
\end{remark}

\begin{proof}[Proof of Theorem~\ref{thm7}]
For any integer~$k$, in view of (S4) and (C1), we can find a $k$-clear interval $I$ and determine the set $\sigma_{k,I}(\MMMM)$.
From Lemma~\ref{lem64}, we can keep shrinking until we obtain a minimal value of $\vert\sigma_{k,I}(\MMMM)\vert$.
This shrinking process must be finite, as $\vert\sigma_{k,I}(\MMMM)\vert\le s$ is a trivial bound.
Thus the union
\begin{displaymath}
\IIII=\bigcup_{i\in\sigma_{k,I}(\MMMM)}(g^k(I)+i)
\end{displaymath}
of subintervals of $[0,s)$ generates a recurrent region $\RRR$ of the system $\PPP$ in a finite number of steps.
To show that $\RRR$ is the whole recurrent set $\RRR(\PPP;\alpha)$,
it suffices to show that it intersects non-trivially with every other recurrent region $\RRR'$ of~$\PPP$.

Let $\RRR'$ be any recurrent region generated by $\IIII'\subset[0,s)$.
Then $\pi(\IIII')=[0,1)$, so that there exists $i^*\in\MMMM$ such that the set $\IIII'\cap(I+i^*)$ has positive measure.
The assumption that $\RRR'$ is a recurrent region implies that the set $\IIII'\cap f^k(I+i^*)$ also has positive measure.
Thus the intersection of $\RRR'\cap\RRR$ has positive measure,
and this completes the proof.
\end{proof}

It remains to establish the critical intermediate result.

\begin{proof}[Proof of Lemma~\ref{lem64}]
For every $x\in g^k(I)$, let
\begin{displaymath}
n(x)=n_{g^k(I)}(x)=\min\{n\ge1:g^n(x)\in g^k(I)\}.
\end{displaymath}
Then it follows from Lemma~\ref{lem52} due to Kac applied to the set $g^k(I)$ that there exists a common threshold $N^*(g^k(I))$ such that
\begin{displaymath}
n(x)=n_{g^k(I)}(x)\le N^*(g^k(I))
\quad
\mbox{for all $x\in g^k(I)$}.
\end{displaymath}
This and (S4) give rise to a partition
\begin{displaymath}
a_1=x_0<x_1<\ldots<x_n=b_1
\end{displaymath}
of the interval $g^k(I)=(a_1,b_1)$ such that for every $\ell=1,\ldots,n$,

$\circ$
the subinterval $D_\ell=(x_{\ell-1},x_\ell)$ is $N^*(g^k(I))$-stable; and

$\circ$
the value $n_{g^k(I)}(x)$ is constant in $D_\ell=(x_{\ell-1},x_\ell)$, with common value~$k_\ell$,
so that $D_\ell=(x_{\ell-1},x_\ell)$ is $k_\ell$-stable,
and we can consider the mapping $\sigma_{k_\ell,D_\ell}$.

We can consider a corresponding partition on the original interval $I=(a,b)$ into subintervals $P_\ell$, $\ell=1,\ldots,n$,
such that $g^k(P_\ell)=D_\ell$.

Let
\textcolor{white}{xxxxxxxxxxxxxxxxxxxxxxxxxxxxxx}
\begin{displaymath}
\MMMM_1=\sigma_{k,I}(\MMMM)
\quad\mbox{and}\quad
\MMMM_2=\MMMM\setminus\MMMM_1.
\end{displaymath}
It follows from (C3) that if $i_1\in\MMMM_1$ and $i_2\in\MMMM_2$, then for every integer $j\ge0$,
\begin{equation}\label{eq6.23}
f^j(g^k(I)+i_1)\cap(g^k(I)+i_2)=\emptyset.
\end{equation}

For every $\ell=1,\ldots,n$, we have $D_\ell\subset g^k(I)$ and $g^{k_\ell}(D_\ell)\subset g^k(I)$,
so that choosing $j=k_\ell$, it follows from \eqref{eq6.23} that
\begin{displaymath}
f^{k_\ell}(D_\ell+i_1)\cap(g^{k_\ell}(D_\ell)+i_2)=\emptyset,
\end{displaymath}
so that $\sigma_{k_\ell,D_\ell}(i_1)\ne i_2$, and so
\textcolor{white}{xxxxxxxxxxxxxxxxxxxxxxxxxxxxxx}
\begin{displaymath}
\sigma_{k_\ell,D_\ell}(\MMMM_1)\cap\MMMM_2=\emptyset.
\end{displaymath}
Let
\textcolor{white}{xxxxxxxxxxxxxxxxxxxxxxxxxxxxxx}
\begin{align}
\MMMM_{3,\ell}
&
=\sigma_{k_\ell+k,P_\ell}(\MMMM)
=\sigma_{k_\ell,D_\ell}(\sigma_{k,P_\ell}(\MMMM))
\nonumber
\\
&
\subset\sigma_{k_\ell,D_\ell}(\sigma_{k,I}(\MMMM))
=\sigma_{k_\ell,D_\ell}(\MMMM_1)
\subset\MMMM_1.
\nonumber
\end{align}
Then there are two possibilities:

(i)
Suppose that $\MMMM_{3,\ell}=\MMMM_1$ for every $\ell=1,\ldots,n$.
Note that we have arrived at the set \eqref{eq6.22} in a finite number of steps,
and the function $f$ leads to an interval exchange transformation from
\begin{displaymath}
\bigcup_{i\in\MMMM_1}(g^k(I)+i)
=\bigcup_{i\in\MMMM_1}\bigcup_{\ell=1}^n\,(D_\ell+i)
\end{displaymath}
to
\textcolor{white}{xxxxxxxxxxxxxxxxxxxxxxxxxxxxxx}
\begin{align}
\bigcup_{i\in\MMMM_1}\bigcup_{\ell=1}^nf^{k_\ell}(D_\ell+i)
&
=\bigcup_{i\in\MMMM_1}\bigcup_{\ell=1}^n\left(g^{k_\ell}(D_\ell)+\sigma_{k_\ell,D_\ell}(i)\right)
=\bigcup_{i\in\MMMM_1}\bigcup_{\ell=1}^n\left(g^{k_\ell}(D_\ell)+i\right)
\nonumber
\\
&
=\bigcup_{i\in\MMMM_1}\bigcup_{\ell=1}^n\left(D_\ell+i\right)
=\bigcup_{i\in\MMMM_1}(g^k(I)+i).
\nonumber
\end{align}
This gives case (i) in the lemma.

(ii)
Suppose that there exists $\ell=1,\ldots,n$ such that $\MMMM_{3,\ell}\ne\MMMM_1$.
Let $k^*=k_\ell+k$.
Then
\textcolor{white}{xxxxxxxxxxxxxxxxxxxxxxxxxxxxxx}
\begin{displaymath}
\vert\sigma_{k^*,P_\ell}(\MMMM)\vert\le\vert\MMMM_1\vert-1,
\end{displaymath}
and $P_\ell$ is $k^*$-stable.
It then follows from (C1) that there exists a subinterval $I^*\subset P_\ell$ which is $k^*$-clear,
and it is easy to see that
\begin{displaymath}
\vert\sigma_{k^*,I^*}(\MMMM)\vert
\le\vert\sigma_{k^*,P_\ell}(\MMMM)\vert
\le\vert\MMMM_1\vert-1
=\vert\sigma_{k,I}(\MMMM)\vert-1.
\end{displaymath}
This gives case (ii) in the lemma.
\end{proof}

%
%

\section{Further comments}\label{sec7}

Here we have studied dynamical systems where the underlying domain is a finite polysquare translation surface,
and the one-sided barriers are located on the vertical sides of some atomic squares.
For such \textit{nice} systems, we have shown that the attractor is a union of finitely many polygons,
and that this set can be determined by a finite process.

If we leave the class of finite polysquare translation surfaces, the structure of the attractor can be significantly different.
The following example, considered by Boshernitzan and Kornfeld~\cite{BK95} in 1995, provides an excellent illustration.

Let $\alpha=0.311\ldots$ be the unique root of the polynomial $p(x)=x^3-x^2-3x+1$ in the interval $(0,1)$.
Consider dissipative flow from left to right with slope $\alpha$ on the unit torus $[0,1)^2$ modified by the inclusion of
a one-sided barrier as shown in Figure~39.
When the flow encounters the bold barrier, it continues in the same direction from the corresponding point on the left edge.
For convenience, assume that the top barrier includes the bottom endpoint and the bottom barrier excludes the top endpoint.

\begin{displaymath}
\begin{array}{c}
\includegraphics[scale=0.8]{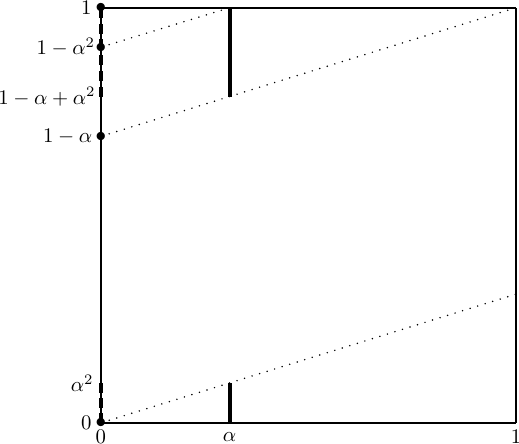}
\\
\mbox{Figure 39: a dissipative system with $1$-direction flow}
\end{array}
\end{displaymath}

This dissipative flow is described by Boshernitzan and Kornfeld in terms of an \textit{interval translation mapping}
which is a generalization of the concept of an interval exchange transformation.
Here the interval $[0,1)$ denotes the left vertical edge.
Then the interval translation mapping is given by $\T:[0,1)\to[0,1)$, where
\begin{equation}\label{eq7.1}
\T(x)=\left\{\begin{array}{ll}
x+\alpha,&\mbox{if $x\in[0,1-\alpha)$},\\
x+\alpha^2,&\mbox{if $x\in[1-\alpha,1-\alpha^2)$},\\
x+\alpha^2-1,&\mbox{if $x\in[1-\alpha^2,1)$}.
\end{array}\right.
\end{equation}
The flow that starts from any point $x$ on the left vertical edge returns to the left vertical edge at the point $\T(x)$.

Let $I_1=[1-\alpha,1)$.
To study the first return map $\T^*_1:I_1\to I_1$, we consider the partition
\textcolor{white}{xxxxxxxxxxxxxxxxxxxxxxxxxxxxxx}
\begin{displaymath}
I_1=[1-\alpha,1-\alpha^2)\cup[1-\alpha^2,1-\alpha^3)\cup[1-\alpha^3,1),
\end{displaymath}
in the same ratio as the partition $[0,1)=[0,1-\alpha)\cup[1-\alpha,1-\alpha^2)\cup[1-\alpha^2,1)$.
If $y\in[1-\alpha,1-\alpha^2)$, then
\begin{displaymath}
\T(y)=y+\alpha^2\in[1-\alpha+\alpha^2,1)\subset[1-\alpha,1)=I_1,
\end{displaymath}
so that $\T^*_1(y)=y+\alpha^2$.
If $y\in[1-\alpha^2,1-\alpha^3)\subset[1-\alpha^2,1)$, then
\begin{align}
\T(y)
&
=y+\alpha^2-1\in[0,\alpha^2-\alpha^3)=[0,1-3\alpha)\subset[0,1-\alpha),
\nonumber
\\
\T^2(y)
&
=y+\alpha^2-1+\alpha\in[\alpha,1-2\alpha)\subset[0,1-\alpha),
\nonumber
\\
\T^3(y)
&
=y+\alpha^2-1+2\alpha\in[2\alpha,1-\alpha)\subset[0,1-\alpha),
\nonumber
\\
\T^4(y)
&
=y+\alpha^2-1+3\alpha=y+\alpha^3\in[3\alpha,1)\subset[1-\alpha,1)=I_1,
\nonumber
\end{align}
so that $\T^*_1(y)=y+\alpha^3$.
If $y\in[1-\alpha^3,1)\subset[1-\alpha^2,1)$, then
\begin{align}
\T(y)
&
=y+\alpha^2-1\in[\alpha^2-\alpha^3,\alpha^2)=[1-3\alpha,\alpha^2)\subset[0,1-\alpha),
\nonumber
\\
\T^2(y)
&
=y+\alpha^2-1+\alpha\in[1-2\alpha,\alpha^2+\alpha)\subset[0,1-\alpha),
\nonumber
\\
\T^3(y)
&
=y+\alpha^2-1+2\alpha=y+\alpha^3-\alpha\in[1-\alpha,\alpha^2+2\alpha)\subset[1-\alpha,1)=I_1,
\nonumber
\end{align}
so that $\T^*_1(y)=y+\alpha^3-\alpha$.
Summarizing, we have
\begin{equation}\label{eq7.2}
\T^*_1(y)=\left\{\begin{array}{ll}
y+\alpha^2,&\mbox{if $y\in[1-\alpha,1-\alpha^2)$},\\
y+\alpha^3,&\mbox{if $y\in[1-\alpha^2,1-\alpha^3)$},\\
y+\alpha^3-\alpha,&\mbox{if $y\in[1-\alpha^3,1)$}.
\end{array}\right.
\end{equation}
Consider next the linear mapping from $[0,1)$ to $[1-\alpha,1)$, given by
\begin{equation}\label{eq7.3}
y=1-\alpha+x\alpha.
\end{equation}
Using \eqref{eq7.1}, we see that
\begin{displaymath}
y(\T(x))=\left\{\begin{array}{ll}
1-\alpha+(x+\alpha)\alpha=y+\alpha^2,&\mbox{if $x\in[0,1-\alpha)$},\\
1-\alpha+(x+\alpha^2)\alpha=y+\alpha^3,&\mbox{if $x\in[1-\alpha,1-\alpha^2)$},\\
1-\alpha+(x+\alpha^2-1)\alpha=y+\alpha^3-\alpha,&\mbox{if $x\in[1-\alpha^2,1)$}.
\end{array}\right.
\end{displaymath}
Comparing this with \eqref{eq7.2}, we conclude that the linear mapping \eqref{eq7.3}
gives rise to an isomorphism between $\T:[0,1)\to[0,1)$ and $\T^*_1:[1-\alpha,1)\to[1-\alpha,1)$.
In other words, the interval translation mapping $\T$ acts on the subinterval $[1-\alpha,1)$
in precisely the same way as on the interval $[0,1)$.
This can be interpreted as \textit{self similarity in a smaller scale}.

Next, let $I_2=[0,\alpha^2)$.
To study the first return map $\T^*_2:I_2\to I_2$, we consider the partition
\textcolor{white}{xxxxxxxxxxxxxxxxxxxxxxxxxxxxxx}
\begin{displaymath}
I_2=[0,\alpha^2-\alpha^3)\cup[\alpha^2-\alpha^3,\alpha^2-\alpha^4)\cup[\alpha^2-\alpha^4,\alpha^2).
\end{displaymath}
We can show that
\begin{equation}\label{eq7.4}
\T^*_2(y)=\left\{\begin{array}{ll}
y+\alpha^3,&\mbox{if $y\in[0,\alpha^2-\alpha^3)$},\\
y+\alpha^4,&\mbox{if $y\in[\alpha^2-\alpha^3,\alpha^2-\alpha^4)$},\\
y+\alpha^4-\alpha^2,&\mbox{if $y\in[\alpha^2-\alpha^4,\alpha^2)$}.
\end{array}\right.
\end{equation}
Furthermore, consider the linear mapping from $[0,1)$ to $[0,\alpha^2)$, given by
\begin{equation}\label{eq7.5}
y=\alpha^2x.
\end{equation}
Using \eqref{eq7.1}, we see that
\begin{displaymath}
y(\T(x))=\left\{\begin{array}{ll}
\alpha^2(x+\alpha)=y+\alpha^3,&\mbox{if $x\in[0,1-\alpha)$},\\
\alpha^2(x+\alpha^2)=y+\alpha^4,&\mbox{if $x\in[1-\alpha,1-\alpha^2)$},\\
\alpha^2(x+\alpha_2-1)=y+\alpha^4-\alpha^2,&\mbox{if $x\in[1-\alpha^2,1)$}.
\end{array}\right.
\end{displaymath}
Comparing this with \eqref{eq7.4}, we conclude that the linear mapping \eqref{eq7.5}
gives rise to an isomorphism between $\T:[0,1)\to[0,1)$ and $\T^*_2:[0,\alpha^2)\to[0,\alpha^2)$.
In other words, the interval translation mapping $\T$ acts on the subinterval $[0,\alpha^2)$
in precisely the same way as on the interval $[0,1)$.

Elaborating on this idea, Boshernitzan and Kornfeld can show that the attractor in this example
exhibits Cantor type self similarity and is an uncountable nowhere dense compact set
with measure zero.

Skripchenko and Troubetskoy~\cite{ST15a,ST15b} have made a study of these systems after
the initial work of Boshernitzan and Kornfeld.
They can show that by varying the position and length of the one-sided barrier and the slope of the flow,
one can construct uncountably many similar systems where the Hausdorff dimension of the closure
of the attractor is zero.
They also study the question of entropy which is beyond the scope of our study here.

%
%

\end{document}